\theoremstyle	{plain}
\newtheorem		{theorem}					{Theorem}	[section]
\newtheorem		{lemma}			[theorem]	{Lemma}
\newtheorem		{corollary}		[theorem]	{Corollary}
\newtheorem		{proposition}	[theorem]	{Proposition}
\theoremstyle	{definition}
\newtheorem		{definition}	[theorem]	{Definition}
\newtheorem		{remark}		[theorem]	{Remark}
\newtheorem*	{remark*}					{Remark}
\numberwithin{equation}{section}
\newcommand{\cC}{{\mathcal C}}
\newcommand{\cH}{{\mathcal H}}
\newcommand{\cK}{{\mathcal K}}
\newcommand{\cL}{{\mathcal L}}
\def\sA{{\mathfrak A}}      
   \def\sH{{\mathfrak H}}   
      \def\sL{{\mathfrak L}}
   \def\sN{{\mathfrak N}}
\def\gh{{\mathfrak h}}
\newcommand {\ff}{\mathfrak f}
\newcommand {\fg}{\mathfrak g}
\def\bR{{\mathbf R}}
\def\bA{{\mathbf A}}
      \def\dC{{\mathbb C}}
   \def\dN{{\mathbb N}}   
      \def\dR{{\mathbb R}}
   \def\cB{{\mathcal B}}   \def\cC{{\mathcal C}}
      \def\cF{{\mathcal F}}
\def\cG{{\mathcal G}}   \def\cH{{\mathcal H}}   
\def\cJ{{\mathcal J}}   \def\cK{{\mathcal K}}   \def\cL{{\mathcal L}}
   \def\cN{{\mathcal N}}   
\def\cP{{\mathcal P}}   \def\cQ{{\mathcal Q}}   \def\cR{{\mathcal R}}
   \def\cW{{\mathcal W}}
\newcommand{\ptp}{p\times p}
\newcommand{\w}[1]{\widetilde{#1}}
\newcommand{\wt}{\widetilde}
\newcommand{\wh}{\widehat}
\DeclareMathOperator{\sgn}{sgn}
\DeclareMathOperator{\ran}{ran}
\DeclareMathOperator{\rank}{rank}
\DeclareMathOperator{\dom}{dom}
\DeclareMathOperator{\mul}{mul}
\DeclareMathOperator{\gr}{gr}
\def\ov{\overline}
\renewcommand{\Re}{\operatorname{Re}}
\renewcommand{\Im}{\operatorname{Im}}
\newcommand{\dsp}{\displaystyle}
\begin{document}

\title[${\sL}$-resolvents of symmetric linear relations in Pontryagin spaces]
{${\sL}$-resolvents of symmetric linear relations in Pontryagin spaces}

\author[V.~Derkach]{Volodymyr Derkach}
\address{ 
	Department of Mathematics,
	Vasyl Stus Donetsk National University,
	Vinnytsia, Ukraine}
\email{derkach.v@gmail.com}


\date{\today}

\keywords{ Symmetric linear relation, Pontryagin space, Improper  gauge, $\sL$-resolvent matrix, Boundary triple, Canonical system}

\begin{abstract}
Let $A$ be
a closed symmetric operator with the deficiency index $(p,p)$, $p<\infty$,
acting in a Hilbert space $\sH$
and let $\sL$ be a subspace of $\sH$.
The set of $\sL$-resolvents of a densely defined symmetric operator 
in a Hilbert space with a proper  gauge $\sL(\subset\sH)$ was described by Kre\u{\i}n and Saakyan.
The Kre\u{\i}n--Saakyan  theory of $\sL$-resolvent matrices was extended by
Shmul'yan and Tsekanovskii to the case of improper  gauge $\sL(\not\subset\sH)$
and by Langer and Textorius to the case of symmetric linear relations in Hilbert spaces.
In the present paper we find connections between the theory of boundary triples
and the Kre\u{\i}n--Saakyan  theory of $\sL$-resolvent matrices
for symmetric linear relations with
improper  gauges in Pontryagin spaces. We extend the known formula for the  $\sL$-resolvent matrix
in terms of boundary operators to this class of relations.
The results are applied
to the minimal linear relation generated by a canonical system.

\end{abstract}

\subjclass{Primary 47B50; Secondary 46C20, 47A70}

\maketitle

\tableofcontents

\section{Introduction} \label{sec:intro}
Let  $\sH$  be  a Hilbert space and let $A$ be
a closed symmetric linear operator with the deficiency index $(p,p)$, $p<\infty$,
acting in a Hilbert space $\sH$
is said to have a proper gauge $\sL(\subset\sH)$ if the set $\rho(A,\sL)$ of regular type points $\lambda\in\dC$ of $A$ such that the space $\sH$ admits the direct decomposition
\begin{equation}\label{eq:Prop_Gauge}
  \sH=\ran(A-\lambda I)\dotplus \sL
\end{equation}
is non-empty.
The operator-valued function
\begin{equation}\label{eq:Lresolv}
  r(\lambda)=P_\sL(\wt A-\lambda I)\upharpoonright_\sL,\quad \lambda\in\rho(\wt A),
\end{equation}
 where
$\wt A$ is a selfadjoint extension of $A$ acting in a (possibly larger) space
$\wt\sH(\supseteq\sH)$, $P_\sL$ is the orthogonal projection onto $\sL$
 and $\rho(\wt A)$
is the set of regular points of $\wt A$,
is called the  $\sL$-resolvent of $A$.
The set of all $\sL$-resolvents of $A$ was described by M. G. Kre\u{\i}n in \cite{Kr44}
and \cite{Kr46} in the case $p<\infty$
and by S. Saakyan in the case $p=\infty$ \cite{Sa65}
by a formula which in the case $p=1$ takes the form
\begin{equation}\label{eq:LresolvDescr}
  r(\lambda)=(w_{11}({\lambda})\tau({\lambda})+w_{12}({\lambda}))
(w_{21}({\lambda})\tau({\lambda})+w_{22}({\lambda}))^{-1},\quad \lambda\in\rho(\wt A)\cap\rho(A,\sL),
\end{equation}
where $\tau$ ranges over the set $\cN^{p\times p}$ of Nevanlinna families, see Definition~\ref{def:Nk-class}.
The block matrix $W(\lambda)=[w_{ij}(\lambda)]_{i,j=1^2}$
generating this linear fractional transform is called the
$\sL$-resolvent matrix of $A$.
In particular, the
$\sL$-resolvent matrix turns out to be $J_p$-contractive in the upper halfplane $\dC_+$,
i.e.,
\begin{equation}\label{eq:J_p}
J_p-W(\lambda)J_p W(\omega)^*\ge 0\quad\text{for}\quad \lambda\in\rho(A,\sL),
\quad\text{where}\quad
J_p=\begin{bmatrix}
    O_{p} & -i I_{p}\\
    i I_{p} & O_{p}
    \end{bmatrix}.
\end{equation}
The theory of $\sL$-resolvent matrices of symmetric operators $A$ with
proper gauges was developed in \cite{Kr49}, \cite{Sa65} and \cite{KS70}.
An effective tool in the study of extension theory of symmetric operators
is the notion of the boundary triple introduced by J. Calkin~\cite{Cal39} and developed
by A. Kochubei \cite{Koc75}, V. Bruk \cite{Br76} and M. and L. Gorbachuks~\cite{GG91}.
In \cite{DM91}  relations between the Kre\u{\i}n's representation theory
and the theory of boundary triples
were investigated.

However, some problems of the analysis require  consideration of $\sL$-resolvents of symmetric operators $A$ with improper gauges $\sL$ consisting of generalized elements
from a space $\sH_-$ of ``distributions'', see Section~\ref{sec:4.1}.
The theory of $\sL$-resolvent matrices and a description of $\sL$-resolvents of a symmetric operator $A$ with
improper gauge was developed by Yu. Shmul'yan and E. Tsekanovskii, \cite{ShTs77}.
This theory was extended by H.~Langer and B.~Textorius to the case of a symmetric linear relation $A$
and applied to the problem of description of $\sL$-resolvents and spectral functions
of operators associated with a canonical system, see~\cite{LaTe84}, \cite{LaTe85}.

 $\sL$-resolvents  of a Pontryagin space symmetric operator $A$ with deficiency index $(1,1)$ were studied by M. Kaltenback and H. Woracek,~\cite{KW98}. V. Derkach and H. Dym employed  the theory of de Branges--Pontryagin spaces  to describe
 $\sL$-resolvents  of a Pontryagin space symmetric operator $A$ with deficiency index $(p,p)$, see~\cite{DD21}.
 In \cite{DD24} this  description was  used for  parametrization of solutions of indefinite truncated matrix moment  problem.

In the present paper we  consider a symmetric linear relation
in a Pontryagin space with deficiency index $(p,p)$
that has an improper  gauge $\sL$.
Using boundary triple's approach we calculate in Theorem~\ref{thm:ResM}
the $\sL$-resolvent matrix $W_\sL(\lambda)$ of $A$.
Under an additional assumption it is shown, that the $\sL$-resolvent matrix $W(\lambda)$ of $A$  belong to the class
$\cW_{\kappa}(J_p)$ of $2p\times 2p$-matrix-valued functions
such that the kernel
\begin{equation}\label{kerK0}
{\mathsf K}_\omega(\lambda):=
\frac{ J_p-W(\lambda)J_p W(\omega)^*}{-i(\lambda-\ov\omega)}, \quad\lambda,\omega\in \rho(A,{\sL}),\quad \lambda\ne\overline{\omega}
\end{equation}
has $\kappa$ negative squares on $\rho(A,{\sL})$.
The $\sL$-resolvent matrix $W(\lambda)$ allows to describe the set of so-called $\sL$-regular
$\sL$-resolvents of $A$, see Theorem~\ref{prop:Gresolv}.
The set of  $\sL$-regular
$\sL$-resolvents of $A$ is parametrized there by formula~\eqref{eq:Lresolv}
where parameter $\tau$ ranges over a class  of generalized Nevanlinna families.
Necessary facts about the class $\cN_\kappa^{\ptp}$ (resp. $\wt\cN_\kappa^{\ptp}$) of generalized Nevanlinna functions (resp., families) introduced by
M. G. Kre\u{\i}n and H. Langer in~\cite{KL71} are presented in Section~\ref{sec:2}.

The formula~\eqref{eq:Formula_W} for the $\sL$-resolvent matrix $W(\lambda)$ obtained in Theorem~\ref{thm:ResM} seems to be new even for the definite case, $\kappa=0$.
We illustrate this formula on the example of
a minimal linear relation $A$ generated by the canonical system~\eqref{eq:can,eq-n}
considered in~\cite{Orc67}.
A description of generalized resolvents of this relation was given in \cite{LaTe82}.
We present another calculation of the $\sL$-resolvent matrix $W(\lambda)$
based on the formula~\eqref{eq:Formula_W}.
We are going to apply this formula for indefinite canonical system elsewhere.

\section{Preliminaries}\label{sec:2}
\subsection{Linear relations in Pontryagin spaces}
\label{subsec:pre:lr}
Let us recall some definitions from ~\cite{Bog68}, \cite{AI86}, \cite{Arens}, \cite{Ben72}.
An indefinite inner product space
$(\sH,[ \cdot,\cdot]_{\sH})$, see~\cite{Bog68}, is called a Kre\u{\i}n space
if it can be decomposed into a direct orthogonal sum
\begin{equation}\label{eq:1.10}
\sH=\sH_+[+]\sH_-
\end{equation}
of two subspaces $\sH_+$ and $\sH_-$ such that $(\sH_{\pm}, \pm [\cdot,\cdot]_\sH)$
are Hilbert spaces.
 The operator $J=P_+-P_-$, where $P_{\pm}$ are orthogonal projections in $\sH$ onto $\sH_\pm$, is called {\it the fundamental symmetry} of $\sH$. It induces a Hilbert inner product in $\sH$ by the formula
\begin{equation}\label{eq:1.11}
   (f,g)_\sH^2=[J f,g]_{\cK}, \quad f,g\in\sH.
\end{equation}
and turns $\sH$ into a Hilbert space
$(\sH,(\cdot,\cdot)_{\sH})$.
  A Kre\u{\i}n space $(\sH,[ \cdot,\cdot]_{\sH})$ with a finite negative index $\kappa_-(\sH):=\dim\sH_-=\kappa$ is called a {\it Pontryagin space} with negative
index $\kappa$  (or shortly, $\pi_\kappa$-space).

 A linear relation $T$ in a Pontryagin space
 $(\sH,[ \cdot,\cdot]_{\sH})$
is a linear subspace of $\sH \times \sH$.
The \emph{domain}, the \emph{range}, the \emph{kernel}, and the \emph{multivalued part} of a linear relation $T$ are defined as follows:
\begin{align}
	\dom{T} &\coloneqq \left\{ f \colon \begin{bmatrix} f \\ g \end{bmatrix} \in T \right\}, &
	\ran{T} &\coloneqq \left\{ g \colon \begin{bmatrix} f \\ g \end{bmatrix} \in T \right\},\\
	\ker{T} &\coloneqq \left\{ f \colon \begin{bmatrix} f \\ 0 \end{bmatrix} \in T \right\}, &
	\mul{T} &\coloneqq \left\{ g \colon \begin{bmatrix} 0 \\ g \end{bmatrix} \in T \right\}.
\end{align}
The \emph{adjoint} linear relation $T^{[*]}$ is defined by
\begin{equation}
	T^{[*]} \coloneqq \left\{
	\begin{bmatrix} u \\ f \end{bmatrix}
	\in \sH \times \sH \colon \langle f,v\rangle_{\sH} = \langle u,g\rangle_{\sH}\ \text{for any}\
	\begin{bmatrix} v \\ g \end{bmatrix}
	\in T \right\}.
\end{equation}
A linear relation $T$ in $\sH$ is called \emph{closed} if $T$ is closed as a subspace of $\sH \times \sH$.
The set of all closed linear operators (relations) is denoted by $\mathcal{C}(\sH)$ ($\wt{\mathcal{C}}(\sH)$).
Identifying a linear operator $T \in \mathcal{C}(\sH)$ with its graph one can consider $\mathcal{C}(\sH)$ as a part of $\wt{\mathcal{C}}(\sH)$.

Let $T$ be a closed linear relation, $\lambda \in \mathbb{C}$, then
	\begin{equation}
		T - \lambda I \coloneqq \left\{ \begin{bmatrix} f \\ g-\lambda f \end{bmatrix} \colon
		\begin{bmatrix} f \\ g \end{bmatrix}
		\in T \right\}.
	\end{equation}
	A point $\lambda \in \mathbb{C}$ such that $\ker{\left( T - \lambda I \right)} = \{0\}$ and $\ran{\left( T - \lambda I \right)} = \sH$ is called a \emph{regular point} of the linear relation $T$. Let $\rho(T)$ be the set of regular points.
	The \emph{point spectrum} $\sigma_p(T)$ of the linear relation $T$
	is defined by
	\begin{equation}\label{eq:Point_s}
		\sigma_p(T) \coloneqq
		\{\lambda\in\mathbb{C} \colon \ker(T-\lambda I)\ne\{0\}\},
	\end{equation}

A linear relation $A$ is called \emph{symmetric} in a Pontryagin space
 $(\sH,[ \cdot,\cdot]_{\sH})$ if $A \subseteq A^{[*]}$.
A point $\lambda \in \mathbb{C}$ is called a \emph{point of regular type} (and is written as $\lambda \in \widehat{\rho}(A)$) for a closed symmetric linear relation $A$, if $\lambda \notin \sigma_p(A)$ and the subspace $\ran(A-\lambda I)$ is closed in $H$.
For $\lambda \in \widehat{\rho}(A)$ let us set
$\sN_{\lambda}(A^{[*]}) \coloneqq \ker (A^{[*]} -\lambda I)$ and
\begin{equation}
	\widehat{\sN}_{\lambda}(A^{[*]})  \coloneqq
	\left\{
	\wh f_{\lambda} =
	\begin{bmatrix} f_{\lambda} \\ \lambda f_{\lambda} \end{bmatrix} \colon f_{\lambda} \in \sN_{\lambda}(A^{[*]})
	\right\}.
\end{equation}
As is known, see  \cite[Theorem 6.1]{IKL},  the numbers $\dim{\sN}_{\lambda}$ take a constant value $n_+(A)$ for all $\lambda\in\wh\rho(A)\cap\dC_+$, and $n_-(A)$ for all $\lambda\in\wh\rho(A)\cap\dC_-$, where
\begin{equation}\label{Cpm}
 \dC_\pm=\{\lambda\in\dC:\, \pm\text{Im }\lambda>0 \}.
\end{equation}
The numbers $n_\pm(A)$ are called the \textit{defect numbers} of $A$.

\subsection{$\cN_\kappa$-families}
Recall that a Hermitian kernel  ${\mathsf
K}_\omega(\lambda):\Omega\times\Omega\to\dC^{m\times m}$ is said to
have $\kappa$ {\bf negative squares}
if for every positive integer $n$ and every choice of $\omega_j\in\Omega$
and $u_j\in\dC^m$
$(j=1,\dots,n)$ the matrix
\[
\left[ u_k^*\mathsf{
K}_{\omega_j}(\omega_k)u_j\right]_{j,k=1}^n
\]
has at most $\kappa$ negative eigenvalues and for some choice of
$\omega_1,\ldots,\omega_n\in\Omega$ and $u_1,\ldots,u_n\in\dC^m$ exactly
$\kappa$ negative
eigenvalues. In this case we write
\[
\mbox{sq}_-{\mathsf K}=\kappa.
\]
For a mvf $ f(\lambda)$
we  shall  use the symbol
${\mathfrak h}_f$ for the domain of holomorphy of $f$ in $\dC$
and set $f^\#(\lambda)=f(\overline{\lambda})^*$, for
$\overline{\lambda}\in {\mathfrak h}_f$.

\begin{definition} \label{def:Nk-class}
A $\ptp$ mvf $Q(\lambda)$ meromorphic on
     $\dC_+\cup\dC_-$  is said to belong to the class $\cN_\kappa^{\ptp}$ if:
\begin{enumerate}
  \item [(a)] the kernel
  \[
  {\mathsf
N}^Q_\omega(\lambda)
:=\left\{\begin{array}{ll}
\frac{\dsp Q(\lambda)-Q(\omega)^*}
{\dsp\lambda-\ov{\omega}}
&\quad \textrm{for }\lambda,\,\omega\in\gh_Q, \ \lambda\ne\ov{\omega}\\
Q'(\lambda)
&\quad \textrm{for }\lambda,\,\omega\in\gh_Q, \ \lambda=\ov{\omega}
\end{array}\right.
  \]
  has $\kappa$ negative squares on $\gh_Q$;
  \item [(b)] $Q^\#(\lambda)=Q(\lambda)$ for all $\lambda\in\gh_Q$.
\end{enumerate}
\end{definition}
The class $\cN_\kappa^{\ptp}$ was introduced by
M. G. Kre\u{\i}n and H. Langer in~\cite{KL71}. In the case $\kappa=0$ it coincides with the class $\cR^{\ptp}:=\cN_0^{\ptp}$ of mvf's meromorphic on
     $\dC_+\cup\dC_-$ which satisfy the assumption (b) in Definition~\ref{def:Nk-class} and have non-negative imaginary part $\text{Im }Q(\lambda)$ in $\dC_+$, see~\cite{KaKr74}.
      A mvf $M\in \cR^{\ptp}$ is said to belong to the class $\cR_u^{\ptp}$
     if  $\det\Im{M({i})}\ne0$.
\begin{definition} \label{def:Nk-family}
    A family $\tau(\lambda)=\ran\begin{bmatrix}
    \varphi(\lambda)\\\psi(\lambda)
    \end{bmatrix}$ where $ \varphi$ and $\psi$ are $\ptp$ mvf's holomorphic on
     $\dC_+\cup\dC_-$  will be called an $\cN_\kappa^{\ptp}$-family if:
\begin{enumerate}
  \item [(i)] the kernel
  \[
  {\mathsf
N}_\omega^{\varphi\psi}(\lambda)
:=\left\{\begin{array}{ll}
\frac{\dsp\varphi(\omega)^*\psi(\lambda)-\psi(\omega)^*\varphi(\lambda)}
{\dsp\lambda-\ov{\omega}}
&\quad \textrm{for }\lambda,\,\omega\in\dC_+\cup\dC_-, \ \lambda\ne\ov{\omega}\\
\varphi^\#(\lambda)\psi(\lambda)-\psi(\lambda)^\#\varphi(\lambda)
&\quad \textrm{for }\lambda,\,\omega\in\dC_+\cup\dC_-, \ \lambda=\ov{\omega}
\end{array}\right.
  \]
  has $\kappa$ negative squares on $\dC_+\cup\dC_-$;
  \item [(ii)] $\varphi^\#(\lambda)\psi(\lambda)
      -\psi^\#(\lambda)\varphi(\lambda)$ for all $\lambda\in\dC_+\cup\dC_-$;
  \item[(iii)] $\ker \varphi(\lambda)\cap\ker \psi(\lambda)=\{0\}$
  for all $\lambda\in\dC_+\cup\dC_-$.
\end{enumerate}
The set of $\cN_\kappa^{\ptp}$-families is denoted by $\wt\cN_\kappa^{\ptp}$.
\end{definition}

Two $\cN_\kappa^{\ptp}$-families
$\ran\text{\rm col} \{\varphi_1(\lambda),\psi_1(\lambda)\}$ and
$\ran\text{\rm col} \{\varphi_2(\lambda),\psi_2(\lambda)\}$
coincide, if
$\varphi_2 (\lambda)=\varphi_1(\lambda)\chi(\lambda)$
and $\psi_2(\lambda)=\psi_1(\lambda)\chi(\lambda)$
for some mvf $\chi(\lambda)$,
which is holomorphic and invertible on $\dC_+\cup\dC_-$.

Every $\cN_\kappa^{\ptp}$-family $\tau=
\ran\text{\rm col} \{\varphi(\lambda), \psi(\lambda)\}$
    can be treated as a family of linear relations in $\dC^p$.
In the case when $\det\varphi(\lambda)\not\equiv 0$
it has at most $\kappa$ zeros and
the $\cN_\kappa^{\ptp}$-family $\tau$ coincides on $\gh_{\varphi^{-1}}$ with
the graph of $\cN_\kappa^{\ptp}$-function
$Q(\lambda)=\psi(\lambda)\varphi(\lambda)^{-1}$.
Since every mvf from  $\cN_\kappa^{\ptp}$ admits such a representation
(see~\cite{BDHS11}),
the set $\cN_\kappa^{\ptp}$ can be identified with a subset of
$\wt \cN_\kappa^{\ptp}$.

\begin{definition} \label{def:Nk-pair}
    A pair $\begin{bmatrix}
    C(\lambda)& D(\lambda)
    \end{bmatrix}$ where $ C$ and $D$ are $\ptp$ mvf's holomorphic on
     $\dC_+\cup\dC_-$  will be called an $\cN_\kappa^{\ptp}$-pair if:
\begin{enumerate}
  \item [(i)] the kernel
  \[
  {\mathsf
N}_\omega^{CD}(\lambda)
:=\left\{\begin{array}{ll}
\frac{\dsp C(\lambda)D(\omega)^*-D(\lambda)C(\omega)^*}
{\dsp\lambda-\ov{\omega}}
&\quad \textrm{for }\lambda,\,\omega\in\dC_+\cup\dC_-, \ \lambda\ne\ov{\omega}\\
C'(\lambda)D(\omega)^*-D'(\lambda)C(\omega)^*
&\quad \textrm{for }\lambda,\,\omega\in\dC_+\cup\dC_-, \ \lambda=\ov{\omega}
\end{array}\right.
  \]
  has $\kappa$ negative squares on $\dC_+\cup\dC_-$;
  \item [(ii)] $C(\lambda)D^\#(\lambda)=D(\lambda)C^\#(\lambda)$ for all $\lambda\in\dC_+\cup\dC_-$;
  \item[(iii)] $\rank \begin{bmatrix}
    C(\lambda)& D(\lambda)
    \end{bmatrix}=p$
  for all $\lambda\in\dC_+\cup\dC_-$.
\end{enumerate}
In particular, the pair $\begin{bmatrix}
    C(\lambda)& D(\lambda)
    \end{bmatrix}$ is called a Nevanlinna pair if the assumptions (ii) and (iii) hold
    and the kernel ${\mathsf
N}_\omega^{CD}(\lambda)$ is non-negative on $\dC_+\cup\dC_-$.
\end{definition}
\begin{lemma}\label{lem:Nkpairs_fam}
There is a one to one correspondence between  $\cN_\kappa^{\ptp}$-families
$\tau(\lambda)=\ran\begin{bmatrix}
    \varphi(\lambda)\\\psi(\lambda)
    \end{bmatrix}$
    and  $\cN_\kappa^{\ptp}$-pairs $\begin{bmatrix}
    C(\lambda)& D(\lambda)
    \end{bmatrix}$
established by the formulas
\begin{equation}\label{eq:Nkpairs_fam}
  C(\lambda)=\psi^\#(\lambda),\quad D(\lambda)=\varphi^\#(\lambda), \quad
  \lambda\in\dC_+\cup\dC_-.
\end{equation}
The $\cN_\kappa^{\ptp}$-family $\tau$  admits the following representation
\begin{equation}\label{LR_50}
\begin{split}
\tau(\lambda)&=\ker{\begin{bmatrix} {C(z)} & -{D}(z)\end{bmatrix}}\\
&=\left\{\begin{bmatrix}
      u \\
      u'
    \end{bmatrix}\in \dC^d\times\dC^d: {C(z)}u- {D}(z)u'=0\right\},
\quad \lambda\in\dC_+\cup\dC_-,
\end{split}
\end{equation}
\end{lemma}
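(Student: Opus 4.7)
The plan is to verify directly that the substitution $C = \psi^\#$, $D = \varphi^\#$, with inverse $\psi = C^\#$, $\varphi = D^\#$ (using the involution property $(f^\#)^\# = f$), sets up a bijection at the level of ordered pairs of matrix-valued functions holomorphic on $\dC_+\cup\dC_-$, and that each of the three defining conditions of an $\cN_\kappa^{\ptp}$-family translates into the corresponding condition of an $\cN_\kappa^{\ptp}$-pair. The representation \eqref{LR_50} will then follow from condition (ii) together with a dimension count.

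Conditions (ii) and (iii) transform transparently. For (ii), the relation $\psi^\#(\lambda)\varphi(\lambda) = \varphi^\#(\lambda)\psi(\lambda)$ rewrites, upon the substitution, as $C(\lambda) D^\#(\lambda) = D(\lambda) C^\#(\lambda)$. For (iii), one has $\ker\varphi(\lambda) \cap \ker\psi(\lambda) = \{0\}$ if and only if $\rank \begin{bmatrix} \varphi(\lambda) \\ \psi(\lambda) \end{bmatrix} = p$; taking conjugate transpose preserves rank and yields $\rank\begin{bmatrix} C(\bar\lambda) & D(\bar\lambda)\end{bmatrix} = p$. Since $\dC_+\cup\dC_-$ is invariant under complex conjugation, the validity of this condition on $\dC_+\cup\dC_-$ is equivalent to pair condition (iii).

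The main computation is for condition (i). Using $\varphi^\#(\omega)^* = \varphi(\bar\omega)$ and $\psi^\#(\omega)^* = \psi(\bar\omega)$, I would compute
\[
\mathsf{N}^{CD}_\omega(\lambda) = \frac{\psi(\bar\lambda)^*\varphi(\bar\omega) - \varphi(\bar\lambda)^*\psi(\bar\omega)}{\lambda - \bar\omega} = \mathsf{N}^{\varphi\psi}_{\bar\lambda}(\bar\omega), \qquad \lambda \ne \bar\omega,
\]
with the diagonal case $\lambda = \bar\omega$ handled by the L'Hôpital-type limit built into the definition (the numerator vanishes by (ii)). Consequently, for any points $\omega_1,\dots,\omega_n \in \dC_+\cup\dC_-$ and vectors $u_1,\dots,u_n \in \dC^p$, the Hermitian block matrix $[u_k^*\mathsf{N}^{CD}_{\omega_j}(\omega_k) u_j]_{j,k=1}^n$ agrees, after setting $\eta_j := \bar\omega_j$ and exploiting the Hermiticity $\mathsf{N}^{\varphi\psi}_{\eta_k}(\eta_j) = \mathsf{N}^{\varphi\psi}_{\eta_j}(\eta_k)^*$ to reorder the arguments, with a Gram matrix of $\mathsf{N}^{\varphi\psi}$ at the points $\eta_1,\dots,\eta_n \in \dC_+\cup\dC_-$. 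Its signature is unchanged, so the two kernels have the same number of negative squares on $\dC_+\cup\dC_-$.

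For the representation \eqref{LR_50}, the inclusion $\ran\begin{bmatrix}\varphi(\lambda)\\ \psi(\lambda)\end{bmatrix} \subseteq \ker\begin{bmatrix} C(\lambda) & -D(\lambda)\end{bmatrix}$ is immediate from (ii), since $\psi^\#(\lambda)\varphi(\lambda) - \varphi^\#(\lambda)\psi(\lambda) = 0$. Both subspaces of $\dC^p\times\dC^p$ have dimension $p$: the range because the column is injective by family condition (iii), and the kernel because the row has rank $p$ by pair condition (iii). Equality follows by dimension count. The only real obstacle throughout is careful bookkeeping: keeping the $\#$-involution and the relabeling $\omega \leftrightarrow \bar\omega$ straight in the kernel identification.
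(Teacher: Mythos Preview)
Your proof is correct and follows the same approach as the paper's own argument, which simply asserts that conditions (i)--(iii) in the two definitions are equivalent under the substitution and that \eqref{LR_50} follows from condition (ii). You have supplied the details the paper omits: the explicit kernel identity $\mathsf{N}^{CD}_\omega(\lambda) = \mathsf{N}^{\varphi\psi}_{\bar\lambda}(\bar\omega)$ and the dimension count for the range/kernel equality.
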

\begin{proof}
With this definition of the pair $\begin{bmatrix}
    C(\lambda)& D(\lambda)
    \end{bmatrix}$
the conditions (i)-(iii) in Definition~\ref{def:Nk-family} are equivalent
to  conditions (i)-(iii) in Definition~\ref{def:Nk-pair}.
  The formula~\eqref{LR_50} follows from the formula (ii) in Definition~\ref{def:Nk-pair}.
\end{proof}

In the case when $\kappa=0$ and $\varphi$ and $\psi$ are constant matrices,
 conditions (ii)-(iii) in Definition~\ref{def:Nk-pair} characterize all selfadjoint
 linear relations in $\dC^p$.
 \begin{lemma}[\cite{RB69}]\label{lem:SALR}
Every selfadjoint
 linear relation in $\dC^p$ admits the representation  $\tau=\ker{\begin{bmatrix} C &D \end{bmatrix}}$
 where $C,\, D\in {\dC}^{\ptp}$ and
\begin{equation}\label{eq:rank_and sym}
{C}{D}^*={D}{C}^* \quad\mbox{and}\quad
\rank \begin{bmatrix}
    C& D
    \end{bmatrix}=p.
\end{equation}
\end{lemma}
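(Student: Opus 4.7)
The plan is to use dimension counting together with an explicit description of the adjoint relation in terms of orthogonal complements.

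First, I would record that any selfadjoint linear relation $\tau$ in $\dC^p$ has $\dim\tau = p$. In finite dimensions $\dim\tau + \dim\tau^{[*]} = 2p$ (because $\tau^{[*]}$ is, up to a canonical symplectic rotation, the orthogonal complement of $\tau$ in $\dC^{2p}$), so $\tau = \tau^{[*]}$ forces $\dim\tau = p$. Consequently, $\tau$ is a $p$-dimensional subspace of $\dC^{p}\times\dC^{p}\cong\dC^{2p}$, and any such subspace can be written as $\tau = \ker\begin{bmatrix} C & D\end{bmatrix}$ for a $p\times 2p$ matrix $\begin{bmatrix} C & D\end{bmatrix}$ of full row rank. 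This already gives half of the conclusion: $\rank\begin{bmatrix} C & D\end{bmatrix} = p$.

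Next I would compute $\tau^{[*]}$ explicitly from this representation. The Euclidean orthogonal complement is $\tau^{\perp} = \ran\begin{bmatrix} C^{*}\\ D^{*}\end{bmatrix}$. Rewriting the defining condition of $\tau^{[*]}$ from the excerpt as
\[
 f^{*}v - u^{*}g = 0 \quad\text{for all } \begin{bmatrix} v\\ g\end{bmatrix}\in\tau,
\]
one sees that $\begin{bmatrix} u\\ f\end{bmatrix}\in\tau^{[*]}$ iff $\begin{bmatrix} u\\ f\end{bmatrix}$ is Euclidean-orthogonal to the ``swap'' $\begin{bmatrix} -g\\ v\end{bmatrix}$ of every element of $\tau$. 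Equivalently $\tau^{[*]}=(J\tau)^{\perp}$ with $J\colon\begin{bmatrix} v\\ g\end{bmatrix}\mapsto\begin{bmatrix}-g\\ v\end{bmatrix}$. A short computation gives $J\tau = \ker\begin{bmatrix} -D & C\end{bmatrix}$, hence
\[
 \tau^{[*]} \;=\; \ran\begin{bmatrix} -D^{*}\\ C^{*}\end{bmatrix}.
\]

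With both $\tau$ and $\tau^{[*]}$ explicit, I would finish as follows. Since $\dim\tau = \dim\tau^{[*]} = p$, the equality $\tau = \tau^{[*]}$ is equivalent to the single inclusion $\tau^{[*]}\subseteq\tau$, which amounts to
\[
 \begin{bmatrix} C & D\end{bmatrix}\begin{bmatrix} -D^{*}\xi\\ C^{*}\xi\end{bmatrix} \;=\; (DC^{*}-CD^{*})\xi \;=\; 0\quad\text{for all }\xi\in\dC^{p}.
\]
This yields exactly $CD^{*}=DC^{*}$, completing the proof. I do not anticipate any genuine obstacle; the only step that requires care is the symplectic bookkeeping in identifying $\tau^{[*]}$ with $(J\tau)^{\perp}$, to make sure the signs in $-D^{*}$ versus $D^{*}$ are tracked correctly so that the final identity comes out as stated rather than with a spurious minus sign.
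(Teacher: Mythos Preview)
Your argument is correct. The paper does not supply its own proof of this lemma; it simply cites \cite{RB69} and states the result. Your dimension count $\dim\tau+\dim\tau^{[*]}=2p$, the kernel representation of a $p$-dimensional subspace, and the identification $\tau^{[*]}=(J\tau)^{\perp}=\ran\begin{bmatrix}-D^{*}\\ C^{*}\end{bmatrix}$ are all sound, and the final step reducing $\tau^{[*]}\subseteq\tau$ to $CD^{*}=DC^{*}$ is exactly the right computation. Since there is no in-paper proof to compare against, there is nothing further to contrast; your write-up would serve perfectly well as a self-contained justification of the lemma.
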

\subsection{Boundary triples and Weyl functions}
\label{subsec:pre:triples}
Let $A$ be a closed symmetric linear relation in a $\pi_\kappa$-space $(\sH,[ \cdot,\cdot]_{\sH})$ with equal defect numbers $n_\pm(A)=p<\infty$. We will need the following
\begin{lemma}\label{lem:11_A*}
If $\wt A$ is a selfadjoint extension of $A$ and $\lambda\in\rho(\wt A)$, then
  \begin{equation}\label{eq:11_A*wtA}
 A^{[*]}=\wt A\dotplus\wh\sN_{\lambda}.
  \end{equation}
\end{lemma}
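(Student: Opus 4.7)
The plan is to verify three standard ingredients for a von Neumann-type direct sum: inclusion of the right-hand side into $A^{[*]}$, directness of the sum, and surjectivity onto $A^{[*]}$. The first two are formal; the content sits in the decomposition step.

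For the inclusion $\wt A \dotplus \wh\sN_\lambda \subseteq A^{[*]}$, note that $\wt A = \wt A^{[*]}$ is a selfadjoint extension of $A$, so $\wt A \subseteq A^{[*]}$; together with $\wh\sN_\lambda \subseteq A^{[*]}$ (by its definition as a subset of $\ker(A^{[*]} - \lambda I)$ written in graph form) and the fact that $A^{[*]}$ is a linear subspace of $\sH \times \sH$, this yields $\wt A + \wh\sN_\lambda \subseteq A^{[*]}$. For directness, if $\wh h \in \wt A \cap \wh\sN_\lambda$ then $\wh h = \begin{bmatrix} f \\ \lambda f \end{bmatrix}$ with $\{f, \lambda f\} \in \wt A$, so that $f \in \ker(\wt A - \lambda I) = \{0\}$ since $\lambda \in \rho(\wt A)$; hence $\wh h = 0$.

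The main step is the decomposition. Given $\wh g = \begin{bmatrix} g \\ g' \end{bmatrix} \in A^{[*]}$, any splitting $\wh g = \wh h + \wh f$ with $\wh f \in \wh\sN_\lambda$ is forced to have the form $\wh f = \begin{bmatrix} f \\ \lambda f \end{bmatrix}$, so the requirement $\wh h \in \wt A$ becomes $\begin{bmatrix} g - f \\ \lambda(g - f) + (g' - \lambda g) \end{bmatrix} \in \wt A$, equivalently $\begin{bmatrix} g - f \\ g' - \lambda g \end{bmatrix} \in \wt A - \lambda I$. Since $\lambda \in \rho(\wt A)$, there is a unique $u \in \dom \wt A$ with $(\wt A - \lambda I) u = g' - \lambda g$; setting $f := g - u$ and $\wh h := \begin{bmatrix} u \\ \lambda u + g' - \lambda g \end{bmatrix}$ produces an element of $\wt A$ by construction. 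Then $\wh f := \wh g - \wh h = \begin{bmatrix} f \\ \lambda f \end{bmatrix}$, and since $\wh g, \wh h \in A^{[*]}$ and $A^{[*]}$ is linear, we get $\wh f \in A^{[*]}$, whence $f \in \sN_\lambda(A^{[*]})$ and $\wh f \in \wh\sN_\lambda$, as required.

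The only nontrivial choice is the ansatz $u = (\wt A - \lambda I)^{-1}(g' - \lambda g)$; this is the single place where $\lambda \in \rho(\wt A)$ is essential, and once $u$ is fixed, the rest reduces to verifying that the residual pair $\wh g - \wh h$ automatically has the shape $\{f, \lambda f\}$, which is built into the definition of $\wh h$.
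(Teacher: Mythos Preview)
Your proof is correct and is the standard von Neumann--type argument for this decomposition. Note that the paper states this lemma without proof, treating it as a known fact, so there is no paper proof to compare against; your write-up supplies exactly the expected argument, with the resolvent ansatz $u=(\wt A-\lambda I)^{-1}(g'-\lambda g)$ doing the work.
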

In the case of a densely defined operator the notion of the boundary triple was introduced in \cite{Koc75,GG91} under the name ``space of boundary values''.
This notion  was adapted to the case of a
non-densely defined symmetric operator in~\cite{Der90}, \cite{M92}, \cite{DM95}
and to the case of linear relations in~\cite{D99}. In the
next definition we follow~\cite{DM95} and~\cite{D99}.

\begin{definition} \label{def:btriple}
Let $A$ be a closed symmetric linear relation in a $\pi_\kappa$-space $(\sH,[ \cdot,\cdot]_{\sH})$ with equal defect numbers $n_{\pm}(A)=p<\infty$.
	A tuple $\Pi = (\dC^p,\Gamma_0,\Gamma_1)$, where $\Gamma_0$ and $\Gamma_1$ are linear mappings from $A^{[*]}$ to $\dC^p$,
is called a \emph{boundary triple} for the linear relation $A^{[*]}$, if:
	\begin{enumerate}
	\item [(i)]
	for all
	$\wh f= \begin{bmatrix} f \\ f' \end{bmatrix}$,
	$\wh g = \begin{bmatrix} g \\ g' \end{bmatrix} \in A^{[*]}$
	the following Green's identity holds
	\begin{equation} \label{eq:1.9}
		[ f',g ]_{\sH} - [ f,g' ]_{\sH}
=(\Gamma_1\wh f, \Gamma_0\wh g)_{\dC^p}- (\Gamma_0\wh f, \Gamma_1\wh g)_{\dC^p};
	\end{equation}
	\item [(ii)]
	the mapping
	$\Gamma=\begin{bmatrix}\Gamma_0 \\ \Gamma_1\end{bmatrix} \colon	A^{[*]} \rightarrow \dC^2$
	is surjective.
	\end{enumerate}
\end{definition}

The following linear relations
\begin{equation} \label{e q:A0A1}
	A_0 \coloneqq \ker \Gamma_0, \qquad A_1 \coloneqq \ker \Gamma_1
\end{equation}
are selfadjoint extensions of the symmetric linear relation $A$.

The definition of the Weyl function for a
 symmet\-ric relation $A$ can be adapted as follows.
\begin{definition} \label{def:11W00}
Let $A$ be a closed symmetric linear relation with $n_+(A)=n_-(A)\le \infty$  and let ${\Pi}=({\dC^p},{\Gamma}_0,{\Gamma}_1)$ be a boundary triple for $A^{[*]}$. The Weyl function $M({\cdot})$ and the $\gamma$-field $\gamma({\cdot})$ of $A$ corresponding to
the boundary triple~$\Pi$ are defined by
   \begin{equation}\label{eq:11.M}
 M({z})\Gamma_0\wh f_{{z}}=\Gamma_1\wh f_{{z}},\quad \wh f_{z}\in \wh\sN_{{z}},
 \quad
 {z}\in\rho(A_0);
\end{equation}
and
\begin{equation}\label{eq:11_gamma1}
\wh\gamma({z})=(\Gamma_0\upharpoonright{\wh\sN_{{z}}})^{-1}, \quad
\gamma({z})=\pi_1\wh\gamma({z}), \quad
 {z}\in\rho(A_0),
\end{equation}
where $\pi_1$ is the projection onto the first component in $\sH\times\sH$.
\end{definition}
By Lemma~\ref{lem:11_A*}, the operator $\Gamma_0\upharpoonright{\wh\sN_{{z}}}:{\wh\sN_{{z}}}\to\dC^p$ is boundedly invertible and the operator-functions
$\wh\gamma({z})$ and  $\gamma({z})$ admit the representations
 \begin{equation}\label{eq:11.wh_gamma2}
  \wh\gamma({z})=\wh\gamma({z})+
  ({z}-\zeta)\begin{bmatrix}
                (A_0 - z)^{-1}\gamma(\zeta) \\
                \gamma(\zeta)+\zeta(A_0 - z)^{-1}\gamma(\zeta)
              \end{bmatrix},\quad  {z},\zeta\in\rho(A_0),
\end{equation}
\begin{equation}\label{eq:11_gamma2}
  \gamma({z})=\gamma(\zeta)+({z}-\zeta)(A_0 - z)^{-1}\gamma(\zeta),\quad  {z},\zeta\in\rho(A_0),
\end{equation}
and so they are holomorphic on $\rho(A_0)$ with values in $\cB(\dC^p,\wh\sN_{z})$ and $\cB(\dC^p,\sN_{z})$, respectively, see~\cite{D99}.
Moreover, the following statement holds.
  \begin{theorem}\label{P:11.5}
Let $A$ be a  symmetric linear relation  with $n_{\pm}(A)=p<\infty$, let
$(\dC^p,\Gamma_0,\Gamma_1)$  be a boundary triple for
$A^{[*]}$, and let $M(\cdot)$ be the corresponding Weyl function. Then
\begin{enumerate}
    \item [\rm(i)] $M(\cdot)$ is a well-defined $\mathcal B({\mathcal H})$-valued function  holomorphic on $\rho(A_0)$.
    \item[\rm(ii)] For all ${z},\zeta\in\rho(A_0)$ the following identity holds:
  \begin{equation}\label{Eq:11.8M}
{\mathsf N}_\omega^{M}(\lambda)
=\frac{M(\lambda)-M(\omega)^*}{\lambda-\ov\omega}
=\gamma(\omega)^{[*]}\gamma(\lambda).
  \end{equation}
\item[\rm(iii)] If the subspace $\overline{\textup{span}}\left\{{\sN}_{\lambda}:\,\,
    \lambda\in\rho(A_0)\right\}$ is infinite-dimensional then for every $n\in\dN$ there exist $m\in\dN$
and a set of points $\lambda_j\in\rho(A_0)$ and vectors $u_j\in\dC^p$ $(j=1,\dots,m)$ such that the matrix
\begin{equation}\label{eq:Gram_M}
  \left[u_k^*{\mathsf N}_{\lambda_k}^{M}(\lambda_j)u_j\right]_{j,k=1}^m
\end{equation}
has at least $n$ positive eigenvalues.
  \end{enumerate}
\end{theorem}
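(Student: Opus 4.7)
The plan is to handle (i)--(iii) in sequence. The main engine for (i) and (ii) is the direct sum $A^{[*]}=A_0\dotplus\wh\sN_z$ supplied by Lemma~\ref{lem:11_A*} together with Green's identity~\eqref{eq:1.9}; these two ingredients reduce (i) and (ii) to short bookkeeping. Part (iii) is the genuinely new ingredient and the main obstacle: I need to translate the infinite-dimensionality of $\overline{\textup{span}}\{\sN_\lambda\}$ into the existence of concretely chosen points and vectors whose Gram matrix exhibits arbitrarily many positive eigenvalues in the indefinite metric of $\sH$.

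For (i), fix $z\in\rho(A_0)$. Since $A_0=\ker\Gamma_0$, the restriction $\Gamma_0\upharpoonright\wh\sN_z$ is injective; by Lemma~\ref{lem:11_A*} every $\wh f\in A^{[*]}$ decomposes as $\wh f_0+\wh f_z$ with $\wh f_0\in A_0$ and $\wh f_z\in\wh\sN_z$, so surjectivity of $\Gamma_0\upharpoonright\wh\sN_z$ onto $\dC^p$ follows from surjectivity of $\Gamma=[\Gamma_0,\Gamma_1]^T$. Hence $\wh\gamma(z)=(\Gamma_0\upharpoonright\wh\sN_z)^{-1}$ is well defined and $M(z)=\Gamma_1\wh\gamma(z)$. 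Holomorphy on $\rho(A_0)$ then comes directly from~\eqref{eq:11.wh_gamma2}, which displays $\wh\gamma(z)$ as a $\cB(\dC^p,\sH\times\sH)$-valued holomorphic function, composed with the bounded map~$\Gamma_1$. For (ii), substitute $\wh f=\wh\gamma(\lambda)u$ and $\wh g=\wh\gamma(\omega)v$ into~\eqref{eq:1.9}. Since the second components are $\lambda\gamma(\lambda)u$ and $\omega\gamma(\omega)v$, the left-hand side collapses to $(\lambda-\ov\omega)[\gamma(\lambda)u,\gamma(\omega)v]_\sH$, while the right-hand side becomes $((M(\lambda)-M(\omega)^*)u,v)_{\dC^p}$ by definition of $M$. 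Rewriting the Krein pairing as $(\gamma(\omega)^{[*]}\gamma(\lambda)u,v)_{\dC^p}$ and letting $u,v$ range over $\dC^p$ yields the operator identity $(\lambda-\ov\omega)\gamma(\omega)^{[*]}\gamma(\lambda)=M(\lambda)-M(\omega)^*$; the off-diagonal case of~\eqref{Eq:11.8M} is immediate, and the diagonal case $\lambda=\ov\omega$ follows by differentiating in $\lambda$ at $\lambda=\ov\omega$.

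For (iii), observe first that by (ii) the matrix in~\eqref{eq:Gram_M} is precisely the Gram matrix $G=([h_j,h_k]_\sH)_{j,k=1}^m$ of the vectors $h_j:=\gamma(\lambda_j)u_j\in\sN_{\lambda_j}$. Given $n$, I would set $m=n+\kappa$ and select $(\lambda_j,u_j)_{j=1}^m$ so that $h_1,\dots,h_m$ are linearly independent. The possibility of such a selection is the subtle step and is secured by the following inductive construction: having chosen $h_1,\dots,h_k$ linearly independent with each $h_i\in\sN_{\lambda_i}$, the infinite dimensionality of $\overline{\textup{span}}\{\sN_\lambda\}$ forces some $\sN_\lambda$ to contain an element $h_{k+1}$ outside $\textup{span}\{h_1,\dots,h_k\}$, for otherwise the entire union $\bigcup_\lambda\sN_\lambda$ would sit inside that finite-dimensional subspace. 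Once $V:=\textup{span}\{h_1,\dots,h_m\}$ is constructed, the signature of $G$ coincides with the signature of $[\cdot,\cdot]_\sH$ restricted to $V$. In the $\pi_\kappa$-space $\sH$ the direct sum of any maximal negative-definite subspace of $V$ with the isotropic part $V\cap V^{[\perp]}$ is non-positive and therefore has dimension at most~$\kappa$; writing the signature as $(p',q',r')$ this gives $q'+r'\le\kappa$, whence $p'\ge m-\kappa=n$, so $G$ has at least $n$ positive eigenvalues.
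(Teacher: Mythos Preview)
Your proof is correct and follows essentially the same route as the paper. For (i) and (ii) the paper merely says the proof ``is based on the formula~\eqref{eq:11_A*wtA} and the identity~\eqref{eq:1.9}'', which is exactly what you unpack. For (iii) the paper also reduces to the Gram matrix $\big([f_j,f_k]_\sH\big)_{j,k}$ via~\eqref{Eq:11.8M}, but simply asserts one may choose $f_j\in\sN_{\lambda_j}$ so that their span contains an $n$-dimensional positive subspace; your explicit choice $m=n+\kappa$ together with the signature bound $q'+r'\le\kappa$ (via the non-positive subspace $V_-\oplus V_0$) is precisely the standard justification of that assertion in a $\pi_\kappa$-space.
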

\begin{proof}
  The proof of (i) and (ii) is based on the formula~\eqref{eq:11_A*wtA} and the identity~\eqref{eq:1.9}.

(iii) For $n\in \dN$ let us   choose  points $\lambda_j\in\rho(A_0)$ and vectors $f_j\in\sN_{\lambda_j}$ such that the linear space
\[
{\textup{span}}\left\{\sum_{j=1}^m c_jf_j:\, c_j\in\dC \right\}
\]
has an $n$--dimensional positive subspace in the $\pi_\kappa$-space $\sH$. Therefore, the Gram matrix
$ \left[[f_j,f_k]_\sH\right]_{j,k=1}^m$
of
the vectors $f_j$, $j=1,\dots,m$, has at least $n$ positive eigenvalues.
Representing vectors $f_j$ as $f_j=\gamma(\lambda_j)u_j$ with $u_j\in\dC^p$
and using~\eqref{Eq:11.8M}
one proves (iii).
\end{proof}
It follows from~\eqref{Eq:11.8M} that the Weyl function $M(\cdot)$ is a $Q$-function of the linear relation $A$ in the sense of~\cite{KL71}. Recall that a symmetric linear operator $A$ is called simple if
\begin{equation}\label{eq:simple_A}
      \sH=\overline{\textup{span}}\left\{{\sN}_{\lambda}:\,\,
    \lambda\in\wh\rho(A)\right\}.
\end{equation}

If $A$ is a simple symmetric operator in a $\pi_\kappa$-space
$(\sH,[ \cdot,\cdot]_{\sH})$, then its Weyl function $M({\cdot})$ defined by
\eqref{eq:11.M} belongs to the generalized Nevanlinna class $\cN_\kappa^{\ptp}$
 and, in particular, if $\kappa=0$,
then $M$ belongs to the Herglotz-Nevanlinna class $\cR^{\ptp}:=\cN_0^{\ptp}$. Moreover,
 $M\in \cR_u^{\ptp}$, i.e., $M\in \cR^{\ptp}$ and  $\det\Im{M({i})}\ne0$.

\subsection{Generalized resolvents of a symmetric linear relation $A$}
\begin{definition}\label{def:genres}
Let $\wt A$ be a selfadjoint extension of the symmetric linear relation $A$ in a possibly larger
Pontryagin space $(\wt\sH,[ \cdot,\cdot]_{\wt\sH})$ with negative
index $\wt\kappa$, $\wt \sH(\supseteq \sH)$. The extension $\wt A$ of
$A$ is called \textit{minimal}, if $\wt{{\sH}}=\sH_{\wt A}$
\begin{equation}\label{eq:MinGres}
    \sH_{\wt A}:=\overline{\textup{span}}\left\{\sH+(\wt{A}-\omega I_{\wt\sH})^{-1}\sH:\,\,
    \omega\in\rho(\wt A)\right\}.
\end{equation}
Let $P_\sH$ be the orthogonal projection onto $\sH$ in $\wt\sH$.
The operator-valued function
\begin{equation}\label{eq:genres}
  \omega \mapsto {\mathbf R}_\omega:=P_\sH(\wt A-\omega I_{\wt\sH})^{-1}|_\sH,\quad \omega\in\rho(\wt A)
\end{equation}
is called the {\bf generalized resolvent} of  $A$.
The representation~\eqref{eq:genres} of the generalized resolvent
${\mathbf R}_\omega$
is called \textit{minimal} if \eqref{eq:MinGres} holds,
and the selfadjoint relation $\wt A$ is called the representing relation of the generalized resolvent $ {\mathbf R}_\omega$.
A generalized resolvent $ {\mathbf R}_\omega$ is said to have
 index $\wt \kappa$, if
$\mbox{ind}_-\wt\sH=\wt\kappa$ for a minimal representation \eqref{eq:genres}.
\end{definition}

Every generalized resolvent $ {\mathbf R}_\omega$  of $S$ admits a minimal representation \eqref{eq:genres}; it   is unique up to a unitary equivalence, see \cite{KL71}, \cite{LaTe82}, \cite{D99}.
Moreover, the index $\wt\kappa=\mbox{ind}_-\wt\sH$
of such a minimal  representation does not depend on the choice of the representing extension $\wt S$ in~\eqref{eq:genres}.

       \begin{theorem}[\cite{KL71,D99}]\label{krein}
 Let $A$ be a closed symmetric linear relation in a $\pi_\kappa$-space  with defect numbers $n_{\pm}(A)=p<\infty$,
let $\Pi=({\dC^p},\Gamma_0,\Gamma_1)$ be a boundary triple for
$A^*$,  let $M(\cdot)$ and $\gamma(\cdot)$ be the
corresponding Weyl function and the $\gamma$-field,  let $A_0=\ker\Gamma_0$, $R_{\lambda}^0=(A_0-{{\lambda}}I_{\sH})^{-1}$
and let $\wt \kappa\in\dN_0$, $\wt \kappa\ge \kappa$. Then
\begin{enumerate}
\item[\rm(i)]
The formula 
\begin{equation} \label{gres0}
{{\mathbf R}_{{\lambda}}}=R_{\lambda}^0-\gamma({\lambda})(M({\lambda})
     +\tau({\lambda}))^{-1}\gamma(\ov{{\lambda}})^{[*]},\quad
     {\lambda}\in\dC_+\cup\dC_-
\end{equation}
establishes a bijective correspondence:   ${\mathbf R}_{\lambda} \longleftrightarrow \tau$   between the class
of all generalized resolvents   ${\mathbf R}_{\lambda}$  of index $\wt\kappa$ and  the  class
of all $\cN_{\wt\kappa-\kappa}^{\ptp}$-families $\tau$ 
\item[\rm(ii)]
%
${{\mathbf R}_{\lambda}}=(A_{-\tau({\lambda})}-{\lambda}I_{\sH})^{-1}$, that is, for every $h\in\sH$, $f={\mathbf R}_{\lambda}h$ is the solution of the boundary value problem with the  eigenvalue dependent boundary condition
\begin{equation}\label{eq:11.Shtr}
\begin{bmatrix}
  f & h
\end{bmatrix}^\top\in A^{[*]}-\lambda I_\sH,\quad
\begin{bmatrix}
 \Gamma_0\wh f &\, \Gamma_1\wh f\,
\end{bmatrix}^\top\in -\tau({\lambda}),
\end{equation}
where $\wh f=\begin{bmatrix}
  f & h +\lambda f
\end{bmatrix}^\top\in A^{[*]}$, and $\lambda\in\dC_+\cup\dC_-$.
\end{enumerate}
      \end{theorem}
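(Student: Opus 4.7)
The plan is to derive formula~\eqref{gres0} by a direct computation combining Lemma~\ref{lem:11_A*} with the Green identity~\eqref{eq:1.9}, and then to match the classes on the two sides of the correspondence by realizing $\wt A$ as a coupling of the boundary triple $\Pi$ with a second boundary triple living in the exit space $\wt\sH\ominus\sH$.

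\textbf{Derivation of the formula.} Fix a minimal selfadjoint extension $\wt A$ of $A$ in a Pontryagin space $\wt\sH$ of negative index $\wt\kappa$, take $h\in\sH$ and $\lambda\in\rho(\wt A)\cap\rho(A_0)$, and set $\wt f=(\wt A-\lambda I_{\wt\sH})^{-1}h$ and $f=P_\sH\wt f=\mathbf{R}_\lambda h$. Applying Green's identity for $\wt A$ to $\wt f$ and to an arbitrary $\wh g=[g,g']^\top\in A\subset\wt A$ and using $g,g'\in\sH$ gives $[h+\lambda f,g]_\sH=[f,g']_\sH$, so $\wh f:=[f,h+\lambda f]^\top\in A^{[*]}$. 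By Lemma~\ref{lem:11_A*} there is a unique decomposition $\wh f=\wh f_0+\wh f_\lambda$ with $\wh f_0\in A_0$ and $\wh f_\lambda\in\wh\sN_\lambda$; matching first components identifies $f_0=R_\lambda^0 h$ and $f_\lambda=\gamma(\lambda)u$ with $u:=\Gamma_0\wh f=\Gamma_0\wh f_\lambda$. Definition~\ref{def:11W00} yields $\Gamma_1\wh f_\lambda=M(\lambda)u$, and Green's identity applied to $\wh f_0\in A_0$ together with the test element $\wh\gamma(\ov\lambda)v\in\wh\sN_{\ov\lambda}$ gives $\Gamma_1\wh f_0=\gamma(\ov\lambda)^{[*]}h$, so
\[
\Gamma_0\wh f=u,\qquad \Gamma_1\wh f=M(\lambda)u+\gamma(\ov\lambda)^{[*]}h.
\]
The extra constraint that selects those $\wh f\in A^{[*]}$ arising from $\wt A$ is encoded as a linear relation $\tau(\lambda)\subset\dC^p\times\dC^p$ by requiring $[\Gamma_0\wh f,\Gamma_1\wh f]^\top\in-\tau(\lambda)$; solving $(M(\lambda)+\tau(\lambda))u\ni-\gamma(\ov\lambda)^{[*]}h$ for $u$ and inserting the result into $f=R_\lambda^0 h+\gamma(\lambda)u$ produces formula~\eqref{gres0} and, simultaneously, assertion~(ii) via the boundary value problem~\eqref{eq:11.Shtr}.

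\textbf{Identifying the class of $\tau$ and the converse.} To show that $\tau\in\wt\cN_{\wt\kappa-\kappa}^{\ptp}$ I would realize $\wt A$ as the selfadjoint coupling of $(A,\Pi)$ with a simple symmetric linear relation $T$ acting in the exit space $\wt\sH_e:=\wt\sH\ominus\sH$ and equipped with its own boundary triple $\Pi_T$. The family $\tau$ is then precisely the Weyl family of $\Pi_T$, so Theorem~\ref{P:11.5} applied inside $\wt\sH_e$ forces the kernel ${\mathsf N}^{\varphi\psi}_\omega(\lambda)$ from Definition~\ref{def:Nk-family} to have exactly $\mathrm{ind}_-\wt\sH_e=\wt\kappa-\kappa$ negative squares. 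Conversely, given $\tau\in\wt\cN_{\wt\kappa-\kappa}^{\ptp}$, I would invoke the Kre\u{\i}n--Langer reproducing kernel realization of $\tau$ to produce a simple symmetric $T$ with Weyl family $\tau$ in a Pontryagin space of negative index $\wt\kappa-\kappa$, couple $(T,\Pi_T)$ with $(A,\Pi)$, and verify that the resulting selfadjoint relation $\wt A$ acts in a Pontryagin space of index $\wt\kappa$ and reproduces the right-hand side of~\eqref{gres0} on $\sH$. Minimality~\eqref{eq:MinGres} and uniqueness up to unitary equivalence follow from simplicity of $T$, making the correspondence $\mathbf{R}_\lambda\leftrightarrow\tau$ bijective.

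\textbf{Main obstacle.} The algebraic derivation in the first step is essentially formal once Lemma~\ref{lem:11_A*} is available. The genuine difficulty is the signature accounting in the second step: one has to separate the negative squares coming from $(\sH,[\cdot,\cdot]_\sH)$ (which account for $\kappa$) from those of the exit space (which account for $\wt\kappa-\kappa$) and exclude any cancellation or duplication under the coupling. This is precisely where Theorem~\ref{P:11.5}(iii) and the Kre\u{\i}n--Langer realization theorem for $\cN_\kappa$-families enter in an essential way, and it is the feature of the argument that has no counterpart in the definite setting.
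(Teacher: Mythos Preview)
The paper does not supply its own proof of this theorem: it is quoted as a known result with the attribution \cite{KL71,D99}, and the text moves directly to Corollary~\ref{cor:krein}. So there is no in-paper argument to compare against.

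That said, your proposal is a faithful outline of the standard proof as carried out in \cite{D99}. The first step---decomposing $\wh f\in A^{[*]}$ via Lemma~\ref{lem:11_A*}, reading off $\Gamma_0\wh f$, $\Gamma_1\wh f$ from Definition~\ref{def:11W00} and the Green identity, and solving for $u$---is exactly the usual derivation of the Kre\u{\i}n-type resolvent formula and simultaneously yields~(ii). For the second step, the coupling/exit-space description you sketch is again the approach of \cite{D99}: since $\sH$ is a regular subspace of the $\pi_{\wt\kappa}$-space $\wt\sH$, one has $\wt\sH=\sH[\dotplus]\wt\sH_e$ with $\mathrm{ind}_-\wt\sH_e=\wt\kappa-\kappa$, and the compression of $\wt A$ to $\wt\sH_e$ produces a simple symmetric relation whose Weyl family is $\tau$; Theorem~\ref{P:11.5} then pins the number of negative squares. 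The converse via the Kre\u{\i}n--Langer realization of $\wt\cN_{\wt\kappa-\kappa}^{\ptp}$-families is likewise the route taken in \cite{KL71,D99}. Your identification of the signature bookkeeping as the only nontrivial point is accurate; nothing in your outline is wrong or missing beyond the details already supplied in the cited references.
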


      \begin{corollary}[\cite{KL71,D99}]\label{cor:krein}
In the assumptions of Theorem~{\rm\ref{krein}}
the formula
\begin{equation} \label{gres0CD}
{{\mathbf R}_{{\lambda}}}=R_{\lambda}^0-
\gamma({\lambda})(C({\lambda})+D({\lambda})M({\lambda}))^{-1}D({\lambda})\gamma(\ov{{\lambda}})^{[*]},
\quad
     {\lambda}\in\rho(A_0)\cap\rho(\wt A).
\end{equation}
establishes a bijective correspondence  between the class
of all generalized resolvents   ${\mathbf R}_{\lambda}$  of index $\wt\kappa$ and  the  class
of all $\cN_{\wt\kappa-\kappa}^{\ptp}$-pairs $\begin{bmatrix} C(\lambda) & D(\lambda)\end{bmatrix}$.
In particular, the formula
\eqref{gres0} gives a description of all canonical resolvents of $A$ when
$C$, $D$ range over the set of $\ptp$-matrices such that
\eqref{eq:rank_and sym} holds.

For every $h\in\sH$, $f={\mathbf R}_{\lambda}h$ is the solution of the boundary value problem
\begin{equation}\label{eq:11.ShtrCD}
\begin{bmatrix}
  f & h
\end{bmatrix}^\top\in A^{[*]}-\lambda I_\sH,\quad
C(\lambda) \Gamma_0\wh f-D(\lambda) \Gamma_1\wh f=0,
\end{equation}
where $\wh f=\begin{bmatrix}
  f & h +\lambda f
\end{bmatrix}^\top\in A^{[*]}$, and ${\lambda}\in\rho(A_0)\cap\rho(\wt A)$.
      \end{corollary}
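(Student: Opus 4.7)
The plan is to deduce Corollary~\ref{cor:krein} from Theorem~\ref{krein} by combining the Kre\u{\i}n resolvent formula~\eqref{gres0} with the family--pair correspondence of Lemma~\ref{lem:Nkpairs_fam}. Given an $\cN_{\wt\kappa-\kappa}^{\ptp}$-family $\tau(\lambda)=\ran\begin{bmatrix}\varphi(\lambda)\\\psi(\lambda)\end{bmatrix}$, Lemma~\ref{lem:Nkpairs_fam} attaches to it an $\cN_{\wt\kappa-\kappa}^{\ptp}$-pair $\begin{bmatrix}C(\lambda) & D(\lambda)\end{bmatrix}$ with $C(\lambda)=\psi^\#(\lambda)$, $D(\lambda)=\varphi^\#(\lambda)$, together with the kernel representation $\tau(\lambda)=\ker\begin{bmatrix}C(\lambda) & -D(\lambda)\end{bmatrix}$ from~\eqref{LR_50}. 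The assignment is bijective modulo right multiplication of $(\varphi,\psi)$, or equivalently of $(C,D)$, by an invertible holomorphic factor $\chi(\lambda)$, so it descends to a bijection on equivalence classes that preserves the negative index $\wt\kappa-\kappa$.

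The key identity to establish is
\[
\bigl(M(\lambda)+\tau(\lambda)\bigr)^{-1}=\bigl(C(\lambda)+D(\lambda)M(\lambda)\bigr)^{-1}D(\lambda),\quad \lambda\in\rho(A_0)\cap\rho(\wt A),
\]
which I would verify by a direct computation in $\dC^p$, viewing $\tau(\lambda)$ and $M(\lambda)+\tau(\lambda)$ as linear relations. A pair $(u,v)\in\dC^p\times\dC^p$ belongs to $M(\lambda)+\tau(\lambda)$ iff $v=M(\lambda)u+u'$ for some $u'$ with $C(\lambda)u=D(\lambda)u'$; substituting $u'=v-M(\lambda)u$ into this constraint yields $(C(\lambda)+D(\lambda)M(\lambda))u=D(\lambda)v$. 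The invertibility of $C(\lambda)+D(\lambda)M(\lambda)$ on $\rho(A_0)\cap\rho(\wt A)$ follows from Theorem~\ref{krein}, which forces $(M(\lambda)+\tau(\lambda))^{-1}$ to exist as a bounded operator there, combined with the full-rank condition~(iii) of Definition~\ref{def:Nk-pair}. Substituting this identity into~\eqref{gres0} produces~\eqref{gres0CD}, and the bijection $\tau\leftrightarrow{\mathbf R}_\lambda$ of Theorem~\ref{krein} transfers to the required bijection $\begin{bmatrix}C & D\end{bmatrix}\leftrightarrow{\mathbf R}_\lambda$.

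For the boundary-value problem~\eqref{eq:11.ShtrCD}, I would translate the condition $[\Gamma_0\wh f,\Gamma_1\wh f]^\top\in-\tau(\lambda)$ from Theorem~\ref{krein}(ii) through the kernel representation~\eqref{LR_50}, reading off $C(\lambda)\Gamma_0\wh f-D(\lambda)\Gamma_1\wh f=0$ once the convention for $-\tau(\lambda)$ as a linear subspace of $\dC^p\times\dC^p$ is fixed. The canonical-resolvent case $\wt\kappa=\kappa$ then reduces via Lemma~\ref{lem:SALR} to constant pairs $[C,D]$ satisfying~\eqref{eq:rank_and sym}. The main obstacle, apart from bookkeeping of signs, is the verification that the map $[C,D]\mapsto{\mathbf R}_\lambda$ is invariant under right multiplication of $[C,D]$ by an invertible holomorphic factor $\chi(\lambda)$; this amounts to the cancellation $(C\chi+D\chi M)^{-1}D\chi=(C+DM)^{-1}D$, which secures that the bijection of Theorem~\ref{krein} descends cleanly to equivalence classes of pairs.
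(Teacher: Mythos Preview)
Your proposal is correct and follows the natural route: the paper states Corollary~\ref{cor:krein} with a reference to \cite{KL71,D99} and gives no separate proof, but the key identity you isolate,
\[
(M(\lambda)+\tau(\lambda))^{-1}=(C(\lambda)+D(\lambda)M(\lambda))^{-1}D(\lambda),
\]
is exactly what the paper invokes later (in the proof of Theorem~\ref{thm:Gres_CD}, with $\mathfrak a_{11}$ in place of $M$), and your derivation of it from the kernel representation~\eqref{LR_50} is the standard one. Your treatment of the bijection via Lemma~\ref{lem:Nkpairs_fam}, of the invariance under right multiplication by $\chi(\lambda)$, and of the canonical case via Lemma~\ref{lem:SALR} is all sound.

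One small point worth recording explicitly rather than leaving as a ``bookkeeping of signs'' remark: with the convention $-\tau(\lambda)=\{[u,-u']^\top:[u,u']^\top\in\tau(\lambda)\}$ and $\tau(\lambda)=\ker[C(\lambda)\ \ -D(\lambda)]$, the condition $[\Gamma_0\wh f,\Gamma_1\wh f]^\top\in-\tau(\lambda)$ from Theorem~\ref{krein}(ii) literally reads $C(\lambda)\Gamma_0\wh f+D(\lambda)\Gamma_1\wh f=0$, not the form with a minus sign stated in~\eqref{eq:11.ShtrCD}. This is a sign convention internal to the paper (and harmless for the resolvent formula itself, since replacing $D$ by $-D$ throughout gives an equivalent pair), but you should flag it rather than absorb it silently.
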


\section{Rigged Pontryagin spaces and generalized resolvents}
In this section we present the construction of the rigging $\sH_+\subset\sH\subset\sH_-$ of a Pontryagin space associated with a symmetric linear relation $A$ and look at the properties of
extended generalized resolvents of $A$ and extended boundary triples.
\subsection{Rigged Pontryagin spaces. }
\begin{lemma}\label{lem:RiggedPontrSp}
Let $A$ is a closed symmetric linear relation with equal
defect  numbers $n_{\pm}(A)=p<\infty$ in a $\pi_\kappa$-space
$(\sH,[ \cdot,\cdot]_{\sH})$. 
Then there exists a pair $\sH_+$ and $\sH_-$ of Hilbert spaces with the duality $\langle\mathfrak{f},  h\rangle_{-,+}$ for $\mathfrak{f}\in\sH_-$ and $h\in\sH_+$ such that
\begin{enumerate}
  \item [(i)] $\sH_+=\dom A^{[*]}(\subset \sH)$;
  \item [(ii)] $\sH\subset \sH_-$ and for all $f\in \sH$ and $h\in\sH_+$ we have
  $\langle{f},  h\rangle_{-,+}=[f,h]_\sH$.
\end{enumerate}
Moreover, the norm in $\sH_+$ can be defined by
\begin{equation}\label{eq:A+Norm}
  \| f\|^2_{\sH_+}=\|f\|_\sH^2+\|P_{\sH_0}Jf'\|_\sH^2\quad
\mbox{for all $\begin{bmatrix}
                        f\\
                        f'
\end{bmatrix}\in A^{[*]}$}
\end{equation}
where $J$ is a fundamental symmetry in $(\sH,[ \cdot,\cdot]_{\sH})$ and
$P_{\sH_0}$ is the orthogonal projection  onto $\sH_{0}:=\overline{\dom A}$
in the Hilbert space $(\sH,[ J\cdot,\cdot]_{\sH})$.
\end{lemma}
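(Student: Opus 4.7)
I would build the rigging in three stages: well-definedness of the proposed norm on $\dom A^{[*]}$, completeness, and construction of $\sH_-$ together with the pairing. The key algebraic fact is $\mul A^{[*]} = (\dom A)^{[\perp]}$, where $[\perp]$ denotes the Krein-orthogonal companion. Applying $J$ to both sides and using that the Hilbert inner product on $\sH$ is $(\cdot,\cdot)_\sH = [J\cdot,\cdot]_\sH$ yields the identity $J\mul A^{[*]} = \sH_0^\perp$, with the complement taken in $(\sH,(\cdot,\cdot)_\sH)$. Consequently, if $(f, f_1'), (f, f_2') \in A^{[*]}$, then $f_1' - f_2' \in \mul A^{[*]}$, so $J(f_1' - f_2')$ is Hilbert-orthogonal to $\sH_0$ and $P_{\sH_0} J f_1' = P_{\sH_0} J f_2'$. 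Hence \eqref{eq:A+Norm} is independent of the choice of $f'$. Positivity is clear and $\|f\|_{\sH_+}=0$ forces $\|f\|_\sH=0$, so polarization yields a genuine inner product on $\dom A^{[*]}$.

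Completeness of $(\sH_+,\|\cdot\|_{\sH_+})$ is the crux of the proof. Given a $\|\cdot\|_{\sH_+}$-Cauchy sequence $\{f_n\}$, the bound $\|\cdot\|_\sH \le \|\cdot\|_{\sH_+}$ gives $f_n \to f$ in $\sH$, while $g_n := P_{\sH_0} J f_n'$ is Cauchy in $\sH$ with a limit $g \in \sH_0$. The decisive step is to replace $f_n'$ by a canonical representative in its $\mul A^{[*]}$-coset: setting $\tilde f_n' := J P_{\sH_0} J f_n' = J g_n$ and using the orthogonal decomposition $\sH = \sH_0 \dotplus J\mul A^{[*]}$, the difference $f_n' - \tilde f_n' = J(I-P_{\sH_0})Jf_n'$ lies in $J\sH_0^\perp = \mul A^{[*]}$, so $(f_n,\tilde f_n') \in A^{[*]}$. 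Since $\tilde f_n' = Jg_n \to Jg$ in $\sH$ and $A^{[*]}$ is closed in $\sH \times \sH$, the limit $(f, Jg)$ lies in $A^{[*]}$, which places $f$ in $\dom A^{[*]} = \sH_+$. Applying \eqref{eq:A+Norm} to $(f_n - f, \tilde f_n' - Jg) \in A^{[*]}$ then gives $\|f_n - f\|_{\sH_+} \to 0$.

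For the dual side, I would define $\sH_-$ as the antidual of $\sH_+$ with its canonical Riesz Hilbert structure and embed $\sH \hookrightarrow \sH_-$ by sending $f$ to the functional $h \mapsto [f,h]_\sH$ on $\sH_+$. The estimate $|[f,h]_\sH| = |(Jf,h)_\sH| \le \|f\|_\sH \|h\|_{\sH_+}$ makes this embedding continuous, and the duality $\langle f,h\rangle_{-,+} := [f,h]_\sH$ is then built into the construction, yielding assertion (ii). The main obstacle, as indicated above, is the completeness step: one must carefully exploit the ambiguity of $f'$ modulo $\mul A^{[*]}$ and select $\tilde f_n'$ so that it simultaneously preserves the graph relation $(f_n,\tilde f_n') \in A^{[*]}$ and converges in $\sH$, after which closedness of $A^{[*]}$ produces the limit inside the graph. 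Once this orchestration is in place, every remaining check reduces to a direct verification from the definitions.
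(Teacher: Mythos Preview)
Your argument is correct and takes a genuinely different route from the paper. The paper first passes to the Hilbert-space relation $S := JA$, splits off the multivalued part $S = \gr(S_{\rm op})\widehat\oplus S_{\rm mul}$, introduces the single-valued operator $S_0^*$ so that $\sH_{0,+}=\dom S_0^*$ carries the graph norm, and then invokes Berezanskii's classical rigged-Hilbert-space construction for $\sH_{0,+}\subset\sH_{\rm op}\subset\sH_{0,-}$; it finishes by adjoining $\mul S$ to both $\sH_\pm$ and twisting the duality by $J$. You instead work directly with the formula \eqref{eq:A+Norm}: the identity $J\mul A^{[*]}=\sH_0^\perp$ gives well-definedness, and your completeness argument via the canonical representative $\tilde f_n'=JP_{\sH_0}Jf_n'$ is self-contained and exploits closedness of $A^{[*]}$ rather than reducing to a densely defined operator. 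The paper's approach buys modularity---everything is reduced to a standard citation---while yours buys transparency, making the role of $P_{\sH_0}$ and of $\mul A^{[*]}$ explicit at each step. One point you should state explicitly is injectivity of $\sH\to\sH_-$, $f\mapsto[f,\cdot]_\sH$: this amounts to $(\dom A^{[*]})^{[\perp]}=\mul A=\{0\}$, i.e., density of $\sH_+$ in $\sH$; the paper's detour through $\sH_{0,+}\oplus\mul S$ is precisely what handles the non-densely-defined case.
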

\begin{proof}
Let us set $S:=JA$ for the symmetric linear relation in the Hilbert space $(\sH,(\cdot,\cdot)_{\sH})$  with the inner product
$(\cdot,\cdot)_{\sH})=(\sH,[J \cdot,\cdot]_{\sH})$.
The proof is splited into 2 steps.


{\bf Step 1: Construction of a rigging for the  linear relation  $S$.}
If  $\mul S\ne\{0\}$, then the  linear relation  $S$ admits the representation
\begin{equation}\label{LR_22}
S=\gr(S_{\rm op})\widehat\oplus\,S_{\rm mul}, 
\end{equation}
where $S_{\rm op}$ is  a single-valued symmetric  operator in the Hilbert space $\sH_{\rm op}:=\sH\ominus\mul S$,
while $S_{\rm mul}:=\begin{bmatrix}0\\ \mul S\end{bmatrix}$ is a purely multi-valued linear relation in the subspace $\sH_{\rm mul}:=\mul S$,  called {\it the multi-valued part} of~$S$.

Let us consider $S_{\rm op}$ as an operator from the Hilbert space $\sH_{0}:=\overline{\dom S}$ to the Hilbert space  $\sH_{\rm op}$ and denote it by $S_0$ and let $S_0^*\in\cC(\sH,\sH_{0})$ be the adjoint to the operator  $S_0\in\cC(\sH_{0},\sH)$. Then $S_0^*$ is a single-valued operator.
Denote by $\sH_{0,+}$ the linear space $\sH_{0,+}:=\dom S_0^*$ endowed with the inner product
 \begin{equation}\label{E:3.1a}
   ( f,g)_{0,+}:=(f,g)_{\sH}+(S_0^* f,S_0^*g)_\sH,\quad
   f,g\in\sH_{0,+}=\dom S_0^*.
\end{equation}
Since the operator $S_0^*$ is closed, $\sH_{0,+}$ is a Hilbert space with the graph norm
 \begin{equation}\label{eq:S0+norm}
\|\wh f\|^2_{0,+}=\|f\|_\sH^2+\|S_0^*f\|_\sH^2\quad
f\in\sH_{0,+}=\dom S_0^*
\end{equation}
such that $\sH_{0,+}\subset\sH_{\rm op}$. If $\sH_{0}\subsetneq\sH$ then $S_{\rm op}^*$
is a linear relation
\[
S_{\rm op}^*=\mbox{gr }S_0^*\widehat\oplus\,\begin{bmatrix}
                                     0 \\
                                     \sH_1
                                   \end{bmatrix},\quad\text{where}\quad
\sH_1=\sH_{\rm op}\ominus \sH_0.
\]

By \cite[Section~1.1.1]{Ber65},
there exist a dual Hilbert space $\sH_{-}$ of  bounded conjugate linear functionals on $\dC^p_{0,+}$
and an isometric operator $V_{0}$ from $\sH_{0,-}$ onto $\sH_{0,+}$ such that $\sH\subset\sH_{-}$ and
\begin{equation}\label{eq:f(h)0}
[f,h]_\sH=(V_0f,h)_{\sH_{+}}\quad\text{for all }\quad f\in\sH_{\rm op},\quad h\in\sH_{0,+}.
\end{equation}
The embedding ${\imath}:\sH_{\rm op}\hookrightarrow \sH_{0,-}$ is realized by the identification of
any vector $f\in\sH_{\rm op}$ with the functional
\begin{equation}\label{eq:f(h)}
{\imath}f:h\in\sH_{0,+}\mapsto {\imath}f(h)=(f,h)_\sH.
\end{equation}
and the space $\sH_{0,-}$ can  be realized as the completion of $\sH_{\rm op}$ with respect to the ``negative norm''
\[
\|{\imath}f\|_{\sH_{-}}=
\sup_{h\in\sH_{0,+}\setminus\{0\}}\frac{|[f,h]_\sH|}{\|h\|_{\sH_{0,+}}}.
\]
For $f\in\sH$ we will identify ${\imath}f$ with $f$. This gives the inclusion $\sH_{\rm op}\subset\sH_{0,-}$ and we will use the notation
\begin{equation}\label{eq:B-+DualityM}
  \langle\mathfrak{f},  h\rangle^{(0)}_{-,+}:=\mathfrak{f}(h)\quad \textrm{for \ $\mathfrak{f}\in\sH_{0,-}$ \ and  \quad $h\in\sH_{0,+}$},
\end{equation}
for the duality between $\sH_{0,-}$ and $\sH_{0,+}$.  In view of \eqref{eq:f(h)}
  the expression $\langle\mathfrak{f},  h\rangle_{-,+}$ can be viewed as an extension  of the inner product in the Hilbert space $\sH_{\rm op}$.

The triple
\[
\sH_{0,+}\subset\sH_{\rm op}\subset\sH_{0,-}
\]
is called a   {\bf rigged Hilbert space} $\sH_{\rm op}$, cf.,  \cite{Ber65}.
Setting
\begin{equation}\label{eq:H0pm_decom}
  \sH_+=\sH_{0,+}\oplus\mul S,\quad \sH_-=\sH_{0,-}\oplus\mul S,\quad
\wh V:=V\oplus I_{\mul S}:\,\wh\sH_-\to\wh\sH_+
\end{equation}
we obtain a rigged Hilbert space $\sH$
\begin{equation}\label{eq:H_Rigging}
  \sH_+\subset\sH\subset\sH_-
\end{equation}
with the duality  between $\sH_-$ and $\sH_+$ given by
\[
\langle\mathfrak{f},  h\rangle^{(S)}_{-,+}:=\langle\mathfrak{f}_0,  h_0\rangle^{(0)}_{-,+}+
(f_1,h_1)_\sH
\]
for
\[
\mathfrak{f}=\mathfrak{f}_0+f_1,\quad
{h}=h_0+h_1,\quad
 \ff_0\in\sH_{0,-},  \quad h_0\in\sH_{0,+},\quad f_1,h_1\in\mul S.
 \]
 Then it follows from \eqref{eq:B-+DualityM} and \eqref{eq:f(h)} that
 \begin{equation}\label{eq:S_duality}
\langle\mathfrak{f},  h\rangle^{(S)}_{-,+}=(f,h)_\sH\quad
 \text{ for all $f\in \sH$ and $h\in\sH_+$}.
\end{equation}
Since $S^*=S_{\rm op}^*\widehat\oplus\,\begin{bmatrix}
                                     0 \\
                                     \mul S
                                   \end{bmatrix}$
we obtain from~\eqref{eq:S0+norm}
\begin{equation}\label{eq:S+Norm}
  \| f\|^2_{\sH_+}=\|f\|_\sH^2+\|P_{\sH_0}f'\|_\sH^2\quad
 \mbox{for all $\begin{bmatrix}
                        f\\
                        f'
\end{bmatrix}\in S^*$}.
\end{equation}

{\bf Step 2: Construction of a rigging for the  linear relation  $A$.}
Let $\sH_+\subset\sH\subset\sH_-$ be the rigging constructed for  the  linear relation  $S$.
Then (i) holds since $S^*=J A^{[*]}$ and hence $\sH_+=\dom S^*=\dom A^{[*]}$.
The formula~\eqref{eq:A+Norm} follows from~\eqref{eq:S+Norm}.

Let us define  the duality  between $\sH_-$ and $\sH_+$  by
\[
\langle\mathfrak{f},  h\rangle_{-,+}:=\langle\mathfrak{f}, J h\rangle^{(S)}_{-,+}\quad
 \text{ for  $\mathfrak{f}\in \sH_-$ and $h\in\sH_+$}.
\]
Then, by \eqref{eq:S_duality}, we get for all $f\in \sH$ and $h\in\sH_+$
\[
\langle{f},  h\rangle_{-,+}=\langle\mathfrak{f}, J h\rangle^{(S)}_{-,+}
=(f,Jh)_\sH=[f,h]_\sH.
\]
This proves (ii).
\end{proof}

The triple $\sH_+\subset\sH\subset\sH_-$ constructed in Lemma~\ref{lem:RiggedPontrSp} is called a   {\it rigged Pontryagin space}.

Let $\dC^p$ be an auxiliary Hilbert space.
For a linear operator $T\in\cB(\sH_-,\dC^p)$ we denote by $T^{\langle*\rangle}\in\cB(\dC^p,\sH_+)$  its adjoint with respect to the duality $\langle \cdot, \cdot\rangle_{-,+}$, i.e.,
\begin{equation}\label{eq:<*>}
  \langle \ff, T^{\langle*\rangle}\xi\rangle_{-,+}=(T\ff,\xi)_{\dC^p}=\xi^*T\ff\quad
\xi\in\dC^p,\quad \ff\in\sH_-.
\end{equation}
For an operator $T\in\cB(\sH,\dC^p)$ the operator $T^{\langle*\rangle}$ defined by the equality~\eqref{eq:<*>} coincides with $T^{[*]}\in\cB(\dC^p,\sH)$.

Similarly, the adjoint to  $G\in\cB(\dC^p,\sH_-)$ with respect to the duality $\langle \cdot, \cdot\rangle_{-,+}$ is defined as the operator $G^{\langle*\rangle}\in\cB(\sH_{+},\dC^p)$ such that
\begin{equation}\label{eq:<*>2}
\langle G\xi, h \rangle_{-,+}=(\xi,G^{\langle*\rangle} h)_{\dC^p}\quad
\xi\in\dC^p,\quad  h\in\sH_+.
\end{equation}
In particular, for the operator $\gamma(\ov{\lambda})\in\cB(\dC^p,\sH_+)$
its adjoint $\gamma(\ov{\lambda})^{\langle*\rangle}\in\cB(\sH_-,\dC^p)$ is defined by
\begin{equation}\label{eq:gamma<*>}
\langle \ff, \gamma(\ov{\lambda}) \xi \rangle_{-,+}
=(\gamma(\ov{\lambda})^{\langle*\rangle}\ff,\xi)_{\dC^p}\quad
\ff\in\sH_-,\quad  \xi\in\dC^p.
\end{equation}
In what follows we also use the notation
\begin{equation}\label{eq+-duality}
  \langle h, \ff  \rangle_{-,+}:=\langle \ff, h \rangle_{-,+}^*,\quad
  \text{for}\quad h\in\sH_+,\quad \ff\in\sH_-.
\end{equation}
\subsection{Extended generalized resolvents}
\begin{lemma}\label{lem:3.2B}
Let $A$ be a  closed symmetric linear relation with equal defect numbers,
and let ${\mathbf R}_\lambda$ be a generalized resolvent of  $A$ with a minimal representing relation $\wt A$, $\lambda\in\rho(\wt A)$. Then
\begin{enumerate}
  \item [(i)] $\ran {\mathbf R}_\lambda\subset \sH_+$ and
  $\begin{bmatrix}
     {\mathbf R}_\lambda f \\
     (I_\sH+\lambda{\mathbf R}_\lambda)f
   \end{bmatrix}\in A^{[*]}$ for all $f\in\sH$.
  \item [(ii)] ${\mathbf R}_\lambda\in\cB(\sH,\sH_+)$.
  \item [(iii)] The operator ${\mathbf R}_\lambda$ admits a continuation to an operator
  $\wt{\mathbf R}_\lambda\in\cB(\sH_-,\sH)$ called extended generalized resolvent.
   \item [(iv)] For every selfadjoint extension $A_0$ of $A$  the extended resolvent $\wt R_\lambda^0$ satisfies the identity
   \begin{equation}\label{eq:HilbertId}
     \wt R_\lambda^0-\wt R_\mu^0=(\lambda-\mu) R_\lambda^0\wt R_\mu^0,\quad
     \lambda,\mu\in\rho (A_0),
   \end{equation}
   and hence
  $\wt R_\lambda^0-\wt R_\mu^0\in\cB(\sH_-,\sH_+)$.
\end{enumerate}
\end{lemma}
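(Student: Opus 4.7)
\emph{Part (i).} My plan is to read the conclusion off the boundary triple description of generalized resolvents. Fix a boundary triple $(\dC^p,\Gamma_0,\Gamma_1)$ for $A^{[*]}$ and let $\tau$ be the $\cN_{\wt\kappa-\kappa}^{\ptp}$-family parametrising $\mathbf R_\lambda$ via Theorem~\ref{krein}. By part~(ii) of that theorem, for every $h\in\sH$ the vector $f=\mathbf R_\lambda h$ satisfies $\begin{bmatrix} f & h+\lambda f\end{bmatrix}^\top\in A^{[*]}$, which already gives the graph condition in~(i); combined with Lemma~\ref{lem:RiggedPontrSp}(i) this forces $f\in\dom A^{[*]}=\sH_+$, so $\ran\mathbf R_\lambda\subset\sH_+$.

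\emph{Part (ii).} I would apply the closed graph theorem. The operator $\mathbf R_\lambda$ is everywhere defined from $\sH$ to $\sH_+$ by~(i). From the norm formula~\eqref{eq:A+Norm}, $\|g\|_\sH\le\|g\|_{\sH_+}$ for all $g\in\sH_+$, so the inclusion $\sH_+\hookrightarrow\sH$ is continuous. If $f_n\to f$ in $\sH$ and $\mathbf R_\lambda f_n\to g$ in $\sH_+$, then $\mathbf R_\lambda f_n\to g$ in $\sH$, while boundedness of $\mathbf R_\lambda$ on $\sH$ gives $\mathbf R_\lambda f_n\to\mathbf R_\lambda f$ in $\sH$. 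Hence $g=\mathbf R_\lambda f$ and $\mathbf R_\lambda\colon\sH\to\sH_+$ has closed graph.

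\emph{Part (iii).} This is the main technical step, but the rigging is tailored for it. I would use that $\wt A$ is selfadjoint so $\mathbf R_\lambda^{[*]}=\mathbf R_{\ov\lambda}$ on $\sH$; apply~(ii) to $\mathbf R_{\ov\lambda}\in\cB(\sH,\sH_+)$ and dualise using the pairing $\langle\,\cdot\,,\,\cdot\,\rangle_{-,+}$. For $\ff\in\sH_-$ the antilinear map $v\mapsto \langle\ff,\mathbf R_{\ov\lambda} v\rangle_{-,+}$ is continuous on $\sH$ (combine continuity of $\mathbf R_{\ov\lambda}\colon\sH\to\sH_+$ with the bound $|\langle\ff,h\rangle_{-,+}|\le\|\ff\|_{\sH_-}\|h\|_{\sH_+}$), so by Riesz representation there is a unique $\wt{\mathbf R}_\lambda\ff\in\sH$ with
\begin{equation}
[\wt{\mathbf R}_\lambda\ff,v]_\sH=\langle\ff,\mathbf R_{\ov\lambda}v\rangle_{-,+}\quad\text{for all }v\in\sH,
\end{equation}
and this defines $\wt{\mathbf R}_\lambda\in\cB(\sH_-,\sH)$. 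To see it extends $\mathbf R_\lambda$, restrict to $\ff=f\in\sH\subset\sH_-$: Lemma~\ref{lem:RiggedPontrSp}(ii) turns the $\langle\,\cdot\,,\,\cdot\,\rangle_{-,+}$-pairing into $[\,\cdot\,,\,\cdot\,]_\sH$, giving $[\wt{\mathbf R}_\lambda f,v]_\sH=[f,\mathbf R_{\ov\lambda} v]_\sH=[\mathbf R_\lambda f,v]_\sH$ for all $v\in\sH$.

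\emph{Part (iv).} The plan is to extend the Hilbert identity from $\sH$ to $\sH_-$ by density. Since $\sH$ is dense in $\sH_-$ (by the construction in the proof of Lemma~\ref{lem:RiggedPontrSp}), for $\ff\in\sH_-$ pick $f_n\in\sH$ with $f_n\to\ff$ in $\sH_-$. The classical resolvent identity $R_\lambda^0 f_n-R_\mu^0 f_n=(\lambda-\mu)R_\lambda^0 R_\mu^0 f_n$ holds in $\sH$; letting $n\to\infty$, the left-hand side converges in $\sH$ to $\wt R_\lambda^0\ff-\wt R_\mu^0\ff$ by~(iii), while $\wt R_\mu^0 f_n\to\wt R_\mu^0\ff$ in $\sH$, and then $R_\lambda^0$ is continuous from $\sH$ to $\sH$, producing~\eqref{eq:HilbertId}. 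The boundedness statement then follows by composing: $\wt R_\mu^0\in\cB(\sH_-,\sH)$ from~(iii), and $R_\lambda^0\in\cB(\sH,\sH_+)$ from~(ii) (applied with $\wt A=A_0$), so the right-hand side $(\lambda-\mu)R_\lambda^0\wt R_\mu^0$ lies in $\cB(\sH_-,\sH_+)$, hence so does $\wt R_\lambda^0-\wt R_\mu^0$. The only subtlety I expect is to make sure the duality adjoint constructed in~(iii) genuinely agrees with $\mathbf R_\lambda$ on $\sH$, which is exactly what the compatibility Lemma~\ref{lem:RiggedPontrSp}(ii) supplies.
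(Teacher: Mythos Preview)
Your argument is correct and leads to the same conclusions, but in parts~(i) and~(ii) it differs from the paper's route. For~(i) the paper verifies the inclusion $\begin{bmatrix}\mathbf R_\lambda f\\(I_\sH+\lambda\mathbf R_\lambda)f\end{bmatrix}\in A^{[*]}$ by a direct two-line computation against $A$, using only $\mathbf R_{\ov\lambda}(h'-\ov\lambda h)=h$ for $\begin{bmatrix}h\\h'\end{bmatrix}\in A$; you instead read it off Theorem~\ref{krein}(ii). Your shortcut is fine, but note that Theorem~\ref{krein}(ii) is formulated for $\lambda\in\dC_+\cup\dC_-$, whereas Lemma~3.2B is stated for all $\lambda\in\rho(\wt A)$, which in a Pontryagin space can contain real points; the paper's direct computation covers this case without extra work. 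For~(ii) the paper bypasses the closed graph theorem by using the explicit norm formula~\eqref{eq:A+Norm}: if $f_n\to0$ in $\sH$ then $\|\mathbf R_\lambda f_n\|_{\sH_+}^2=\|\mathbf R_\lambda f_n\|_\sH^2+\|P_{\sH_0}J(f_n+\lambda\mathbf R_\lambda f_n)\|_\sH^2\to0$, which is slightly more direct than your closed-graph argument, though both are valid. Parts~(iii) and~(iv) are essentially identical to the paper: the paper phrases~(iii) as $\mathbf R_\lambda\subset\mathbf R_{\ov\lambda}^{\langle*\rangle}$ and~(iv) as ``take the $\langle*\rangle$-adjoint of the Hilbert identity'', which is precisely your duality/density argument in compressed form.
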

\begin{proof}
  (i) For all $f\in\sH$ and $\begin{bmatrix}
     h\\
     h'
   \end{bmatrix}\in A$ we obtain
 \[
   [(I_\sH+\lambda{\mathbf R}_\lambda)f,h]_\sH - [{\mathbf R}_\lambda f,h])_\sH\\
   =
   [f,h]_\sH -[f, {\mathbf R}_{\overline\lambda}(h'-\overline\lambda h)]_\sH =[f,h]_\sH-[f,h]_\sH=0  
\]
 Here we used the inclusion ${\mathbf R}_{\overline\lambda}(h'-\overline\lambda h)=h $.
This proves (i).

(ii) Let $f_n\to 0$ in $\sH$. Then ${\mathbf R}_\lambda f_n\to 0$ and, by
\eqref{eq:A+Norm},
 \[
  \| {\mathbf R}_\lambda f_n\|^2_{\sH_+}=\|{\mathbf R}_\lambda f_n\|_\sH^2+\|P_{\sH_0}J(f_n+\lambda {\mathbf R}_\lambda f_n)\|_\sH^2\to 0.
\]

(iii) Since  ${\mathbf R}_{\overline\lambda}^{\langle*\rangle}\in\cB(\sH_-,\sH)$, (iii) follows from the inclusion ${\mathbf R}_{\lambda}\subset{\mathbf R}_{\overline\lambda}^{\langle*\rangle}$.

(iv) follows from the Hilbert identity
\[
R_{\overline\lambda}^0-R_{\overline\mu}^0
=({\overline\lambda}-{\overline\mu}) R_{\overline\lambda}^0 R_{\overline\mu}^0
\]
by taking the adjoint with respect to the duality $\langle \cdot, \cdot\rangle_{-,+}$
and using the equalities $(R_{\overline\lambda}^0)^{\langle*\rangle}=\wt R_{\lambda}^0$,
$(R_{\overline\mu}^0)^{\langle*\rangle}=\wt R_{\mu}^0$.
\end{proof}
The operator $\bR$ defined by
\begin{equation}\label{eq:cR}
\bR=\frac12(R_{i}^0+R_{-i}^0).
\end{equation}
has the properties
\begin{enumerate}
  \item $\bR=\bR^*$, $\bR\in\cB(\sH,\sH_+)$ and $\wt\bR:=\bR^{\langle*\rangle}\in\cB(\sH_-,\sH)$.
  \item For every  extended generalized resolvent $\wt{\mathbf R}_\lambda\in\cB(\sH_-,\sH)$ we have
  $\wt{\mathbf R}_\lambda-\wt\bR\in\cB(\sH_-,\sH_+)$.
\end{enumerate}
the first claim follows from Lemma~\ref{lem:3.2B}(ii)--(iii) and the second claim follows from the equality
\[
\wt{\mathbf R}_\lambda-\wt\bR=(\wt R_{\lambda}^0-\wt\bR)-\gamma(\lambda)(M(\lambda)
     +\tau(\lambda))^{-1}\gamma(\ov{\lambda})^{\langle*\rangle},
\]
and the relations $\gamma(\lambda)\in\cB(\dC^p,\sH_+)$ and $\gamma(\ov{\lambda})^{\langle*\rangle}\in\cB(\sH_-,\dC^p)$, see~\eqref{eq:gamma<*>}.

The operator $\bR$  is called the {\it regularizing operator} and
\begin{equation}\label{eq:RegExtRes}
  \wh{\mathbf R}_\lambda:=\wt{\mathbf R}_\lambda-\wt\bR
\end{equation}
is called
the {\it regularized extended generalized resolvent}.
\begin{lemma}\label{lem:9.5B}
Let $A$ be a  closed symmetric linear relation with equal defect numbers,
 let ${\mathbf R}_\lambda$ be an extended generalized resolvent of  $A$  with a minimal representing relation $\wt A$ and let
\begin{equation}\label{eq:bf_S}
\bA=\left\{\wh f=\begin{bmatrix}
f \\
\ff'
\end{bmatrix}\in \begin{bmatrix}
\sH \\
\sH_-
\end{bmatrix}  :\, \langle \ff', h \rangle_{-,+}=[f,h']_\sH\quad
\text{for}\quad  \wh h=\begin{bmatrix}
                                               h \\
                                               h'
                                             \end{bmatrix}\in A^{[*]}.
                                             \right\}
\end{equation}
Then:
\begin{enumerate}
\item[\rm(i)] $\bA$ is a closed linear relation in $\wt\cC(\sH,\sH_-)$
with $\dom \bA=\sH_{0}:=\overline{\dom S}$.
  \medskip

  \item [(ii)] $\bA$ is an extension of $A$ and $A=\bA\cap\sH^2$.
  \medskip

  \item [(iii)]  $\widetilde{\mathbf R}_{\lambda}(f' -{\lambda} f)=f$ for all
  $\begin{bmatrix}
  f \\
  f'
  \end{bmatrix}\in \bA$, $\lambda\in\rho(\wt A)$.
  \medskip

  \item [(iv)] $\ran({\mathbf{A}} -{\lambda} I_\sH)$ is a closed subspace of $\sH_-$ for all ${\lambda}\in\wh\rho(A)$.
  \medskip

  \item [(v)] $\ker({\mathbf{A}} -{\lambda} I_\sH)=\{0\}$ for $\lambda\in\wh\rho(A)$.
  \medskip

   \item [(vi)] The annihilator $\{\ran({\mathbf{A}} -{\lambda} I_\sH)\}^\perp$ of the set $\ran({\mathbf{A}} -{\lambda} I_\sH)$ in $\sH_+$ coincides with $\wh\sN_{\lambda}$.
\end{enumerate}
\end{lemma}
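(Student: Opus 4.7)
The plan is to establish the six parts in order, since (iv)--(vi) follow from (i)--(iii) combined with properties of the extended generalized resolvent from Lemma~\ref{lem:3.2B}.

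\emph{Parts (i) and (ii).} Closedness of $\bA$ in $\sH\times\sH_-$ is immediate, since $\bA$ is the intersection, indexed by $\wh h\in A^{[*]}$, of kernels of the continuous functionals $[f,\ff']^\top\mapsto\langle\ff',h\rangle_{-,+}-[f,h']_\sH$. For the domain identification I establish both inclusions. The inclusion $\dom\bA\subseteq\sH_0$ comes from testing the defining identity against two pairs $[h,h_1']^\top,[h,h_2']^\top\in A^{[*]}$ sharing the same first component: their difference $h_2'-h_1'$ lies in $\mul A^{[*]}=(\overline{\dom A})^{[\perp]}=\sH_0^{[\perp]}$, and $[f,h_2'-h_1']_\sH=0$ forces $f\in\sH_0$. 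Conversely, given $f\in\sH_0$, the assignment $\wh h\mapsto[f,h']_\sH$ factors through $h\in\sH_+=\dom A^{[*]}$ by the same $J$-orthogonality argument, and the bound $|[f,h']_\sH|\le\|f\|_\sH\|h\|_{\sH_+}$ is obtained by choosing the representative with $Jh'\in\sH_0$ and invoking the graph norm~\eqref{eq:A+Norm}, which produces the required $\ff'\in\sH_-$. Part~(ii) then follows: $A\subseteq\bA$ is Green's identity for $A\subseteq A^{[*]}$, while any $[f,f']^\top\in\bA\cap\sH^2$ satisfies $[f',h]_\sH=[f,h']_\sH$ for every $\wh h\in A^{[*]}$, which is exactly $[f,f']^\top\in A^{[*][*]}=A$ by closedness of $A$.

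\emph{Part (iii).} Fix $[f,\ff']^\top\in\bA$ and $\lambda\in\rho(\wt A)$. For any $g\in\sH$ the pair $\wh h=\bigl[{\mathbf R}_{\overline\lambda}g,\,g+\overline\lambda{\mathbf R}_{\overline\lambda}g\bigr]^\top$ lies in $A^{[*]}$ by Lemma~\ref{lem:3.2B}(i). Substituting it into the defining identity gives
\begin{equation}
\langle\ff',{\mathbf R}_{\overline\lambda}g\rangle_{-,+}=[f,g]_\sH+\lambda[f,{\mathbf R}_{\overline\lambda}g]_\sH.
\end{equation}
Rewriting the left side via $\langle\ff',{\mathbf R}_{\overline\lambda}g\rangle_{-,+}=[\wt{\mathbf R}_\lambda\ff',g]_\sH$ (using $\wt{\mathbf R}_\lambda={\mathbf R}_{\overline\lambda}^{\langle*\rangle}$) and the mixed term on the right as $[\wt{\mathbf R}_\lambda f,g]_\sH$ produces $[\wt{\mathbf R}_\lambda\ff'-f-\lambda\wt{\mathbf R}_\lambda f,\,g]_\sH=0$ for all $g\in\sH$, whence $\wt{\mathbf R}_\lambda(\ff'-\lambda f)=f$.

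\emph{Parts (iv), (v), (vi).} Claim (v) is immediate: if $\ff'-\lambda f=0$ then by (iii) $f=\wt{\mathbf R}_\lambda 0=0$, hence also $\ff'=0$. For (iv), given a sequence $\ff_n'-\lambda f_n\to\ff$ in $\sH_-$ with $[f_n,\ff_n']^\top\in\bA$, boundedness of $\wt{\mathbf R}_\lambda\colon\sH_-\to\sH$ yields $f_n\to\wt{\mathbf R}_\lambda\ff=:f$ in $\sH$, hence $\ff_n'\to\ff+\lambda f=:\ff'$ in $\sH_-$; the closedness from (i) places $[f,\ff']^\top\in\bA$ with $\ff'-\lambda f=\ff$. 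For (vi), if $h\in\sH_+$ annihilates $\ran(\bA-\lambda I_\sH)$, combining $\langle\ff'-\lambda f,h\rangle_{-,+}=0$ with the defining identity of $\bA$ yields $[f,h'-\overline\lambda h]_\sH=0$ for every $f\in\sH_0=\dom\bA$, equivalently $h'-\overline\lambda h\in\mul A^{[*]}$, i.e., $[h,\overline\lambda h]^\top\in A^{[*]}$; this identifies the annihilator with the claimed defect subspace $\wh\sN_\lambda$, and the reverse inclusion is the same computation in reverse.

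\emph{Main obstacle.} The delicate step is the continuity assertion inside (i): the $J$-orthogonality of $f\in\sH_0$ to $\mul A^{[*]}$ makes the functional $\wh h\mapsto[f,h']_\sH$ well-defined, but the $\sH_-$-boundedness requires the careful choice of a representative $h'$ with $Jh'\in\sH_0$, combined with the explicit form of the graph norm~\eqref{eq:A+Norm}. A secondary bookkeeping point is that (iii)--(v) use $\wt{\mathbf R}_\lambda$ for some selfadjoint extension $\wt A$ with $\lambda\in\rho(\wt A)$; existence of such $\wt A$ for arbitrary $\lambda\in\wh\rho(A)$ is guaranteed by the standard extension theory of Pontryagin-space symmetric relations with equal finite defect.
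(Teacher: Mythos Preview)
Your proof is correct and follows the same outline as the paper's; you also supply the details for (i) that the paper declares ``immediate.'' Two minor differences worth noting: the paper derives (v) directly from (ii) (if $\ff'-\lambda f=0$ then $\ff'=\lambda f\in\sH$, so $[f,\lambda f]^\top\in\bA\cap\sH^2=A$ and $\lambda\in\wh\rho(A)$ gives $f=0$), which sidesteps the need for an auxiliary extension $\wt A$ with $\lambda\in\rho(\wt A)$; and for the first inclusion in (vi) the paper simply observes $\ran(A-\lambda I)\subset\ran(\bA-\lambda I)$ and passes to the known annihilator of $\ran(A-\lambda I)$, rather than running your direct computation.
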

\begin{proof}
  (i) $\&$ (ii) are  immediate from \eqref{eq:bf_S}.
  \medskip

  (iii) For  $g\in\sH$ we get, by Lemma~\ref{lem:3.2B},
  $\begin{bmatrix}
     {\mathbf R}_{\overline{\lambda}} g \\
     (I_\sH+\ov\lambda{\mathbf R}_{\overline{\lambda}}g
   \end{bmatrix}\in A^{[*]}$
   and so   for  $\begin{bmatrix}
  f \\
  \ff'
  \end{bmatrix}\in \bA$ we obtain, by~\eqref{eq:bf_S},
  \[
  \begin{split}
     [\widetilde{\mathbf R}_{\lambda}(\ff' -{\lambda} f),g]_\sH
      &= \langle \ff' -{\lambda} f,{{\mathbf R}_{\overline{\lambda}}}g\rangle_{-,+} \\
      &=[f, g+\overline{\lambda}{\mathbf R}_{\overline{\lambda}}g])_\sH
      -\lambda[f, {\mathbf R}_{\overline{\lambda}}g]_\sH=
      [f,g]_\sH.
      \end{split}
  \]
  This proves (iii).
  \medskip

  (iv) Let
  $\begin{bmatrix}
  f_n \\
  \ff_n'
  \end{bmatrix}\in \bA$, and let
   $\ff_n' -{\lambda} f_n\to \fg$ in $\sH_-$ as $n\to\infty$. Then, by (iii) and Lemma~\ref{lem:3.2B}(iii),
   $f_n=\widetilde{\mathbf R}_{\lambda}\ff_n' -{\lambda} f_n \to f:=\widetilde{\mathbf R}_{\lambda}\fg$  in $\sH$. Hence
   $\ff_n' \to\ff':={\lambda} f+ \fg$ in $\sH_-$ and, by (ii),
   we get
   $\begin{bmatrix}
  f \\
  \ff'
  \end{bmatrix}\in \bA$ and therefore
 $\fg=\ff'-\lambda f\in \ran(\bA-\lambda I_\sH)$.
    \medskip

  (v) follows from (ii).
    \medskip

  (vi)  Since $\ran(A -{\lambda} I_\sH)\subset \ran({\mathbf{A}} -{\lambda} I_\sH)$ for $\lambda\in\wh\rho(A)$, the inclusion  $\{\ran({\mathbf{A}} -{\lambda} I_\sH)\}^\perp\subseteq\wh\sN_{\lambda}$ holds.
  Conversely if $h\in\wh\sN_{\ov\lambda}$, then
 $\wh h =\begin{bmatrix}
                                               h \\
                                               \ov{\lambda}h
                                             \end{bmatrix}\in A^{[*]}$
 and for all $\begin{bmatrix}
  f \\
  \ff'
  \end{bmatrix}\in \bA$ we get, by \eqref{eq:bf_S},
 \[
 0= \langle \ff'-\lambda f, h \rangle_{-,+}=[f,\ov{\lambda}h]_\sH-\lambda[f,h]_\sH=0
 \]
  Therefore $ h\in\{\ran({\mathbf{A}} -{\lambda} I_\sH)\}^\perp$.
%
%
\end{proof}

Let the linear relation $A^{\langle*\rangle}$ be given by
\begin{equation}\label{eq:bf_A*}
A^{\langle*\rangle}=\left\{\wh f=\begin{bmatrix}
f \\
\ff'
\end{bmatrix}\in \begin{bmatrix}
\sH \\
\sH_-
\end{bmatrix}  :\, \langle \ff', h \rangle_{-,+}=[f,h']_\sH\quad
\text{for}\quad  \wh h=\begin{bmatrix}
                                               h \\
                                               h'
                                             \end{bmatrix}\in A\right\}.
\end{equation}
Similarly, for a selfadjoint extension $A_0$ of $A$ we will set
\begin{equation}\label{eq:bf_A0}
\bA_0=\left\{\wh f=\begin{bmatrix}
f \\
\ff'
\end{bmatrix}\in \begin{bmatrix}
\sH \\
\sH_-
\end{bmatrix}  :\, \langle \ff', h \rangle_{-,+}=[f,h']_\sH\quad
\text{for}\quad  \wh h=\begin{bmatrix}
                                               h \\
                                               h'
                                             \end{bmatrix}\in A_0\right\}.
\end{equation}
Then, by~\eqref{eq:bf_S},~\eqref{eq:bf_A*} and~\eqref{eq:bf_A0},
\begin{equation}\label{eq:bf_A_A0}
\bA\subset \bA_0\subset A^{\langle*\rangle}.
\end{equation}
\begin{lemma}\label{lem:bf_A0}
Let $A_0$  be a selfadjoint extension of $A$ in $\sH$, let $\bA_0$ be defined by~\eqref{eq:bf_A0},
and let $\widetilde{ R}^0_{\lambda}$ be the extended resolvent of $A_0$,  ${\lambda}\in\rho(A_0)$.
Then:
\begin{enumerate}
\item [(i)]  $\widetilde{ R}^0_{\lambda}(\ff' -{\lambda} f)=f$ for all
  $\begin{bmatrix}
  f \\
  \ff'
  \end{bmatrix}\in \bA_0$, $\omega\in\rho(\wt A)$.
  \medskip

\item [(ii)]
The linear relation $\bA_0$ admits the representation
\begin{equation}\label{eq:bf_A0R}
\bA_0=\left\{\begin{bmatrix}
\wt R_\lambda^0\ff \\
\ff+\lambda\wt R_\lambda^0\ff
\end{bmatrix}:\, \ff\in\sH_-\right\}.
\end{equation}
  \medskip

\item [(iii)]
$A^{\langle*\rangle}$, $\bA_0$ and $\wh\sN_{\lambda}$ are closed subspaces $\wh\sH:=\sH\times\sH_-$ and
\begin{equation}\label{eq:A+_decom0}
    A^{\langle*\rangle}=\bA_0\dotplus\wh\sN_{\lambda}.
\end{equation}
\end{enumerate}
\end{lemma}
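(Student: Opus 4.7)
The plan is to establish (i) first by a direct duality computation paralleling the proof of Lemma~\ref{lem:9.5B}(iii), then derive (ii) from (i) by an approximation argument, and finally obtain the decomposition in (iii) by reducing the remainder of any element of $A^{\langle*\rangle}$ to $A^{[*]}$ via the auxiliary identity $A^{\langle*\rangle}\cap(\sH\times\sH)=A^{[*]}$.

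For (i), I fix $\begin{bmatrix} f \\ \ff'\end{bmatrix}\in \bA_0$ and test $\wt R^0_\lambda(\ff'-\lambda f)-f$ against an arbitrary $g\in\sH$. The identity
\[
[\wt R^0_\lambda\ff_0,g]_\sH=\langle\ff_0,R^0_{\ov\lambda}g\rangle_{-,+},\qquad \ff_0\in\sH_-,\ g\in\sH,
\]
encoding that $\wt R^0_\lambda=(R^0_{\ov\lambda})^{\langle*\rangle}$, reduces the problem to analyzing $h:=R^0_{\ov\lambda}g\in\dom A_0$ together with $h':=g+\ov\lambda h$, which satisfies $\begin{bmatrix} h\\h'\end{bmatrix}\in A_0$. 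Invoking the defining duality of $\bA_0$ in~\eqref{eq:bf_A0} for this pair yields $\langle\ff',h\rangle_{-,+}=[f,g]_\sH+\lambda[f,h]_\sH$; the $\lambda$-terms cancel against $-\lambda[f,R^0_{\ov\lambda}g]_\sH$ and $[\wt R^0_\lambda(\ff'-\lambda f),g]_\sH=[f,g]_\sH$ for every $g$, giving (i).

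For (ii), let $T_\lambda$ denote the right-hand side of~\eqref{eq:bf_A0R}. The inclusion $\bA_0\subseteq T_\lambda$ is immediate from (i): setting $\ff:=\ff'-\lambda f$, (i) gives $f=\wt R^0_\lambda\ff$ and hence $\ff'=\ff+\lambda\wt R^0_\lambda\ff$. Conversely, for $\ff\in\sH\subseteq\sH_-$ the pair $\begin{bmatrix} R^0_\lambda\ff \\ \ff+\lambda R^0_\lambda\ff\end{bmatrix}$ lies in $A_0\subseteq\bA_0$; density of $\sH$ in $\sH_-$, continuity of $\wt R^0_\lambda\colon\sH_-\to\sH$ (Lemma~\ref{lem:3.2B}), and closedness of $\bA_0$ in $\sH\times\sH_-$ (which comes from the continuity in $(f,\ff')$ of the defining duality conditions) extend this to all $\ff\in\sH_-$, giving $T_\lambda\subseteq\bA_0$.

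For (iii), closedness of $\bA_0$ and $A^{\langle*\rangle}$ is immediate from their definitions as intersections of closed conditions on $\sH\times\sH_-$, while $\wh\sN_\lambda$ is closed because $\sN_\lambda=\ker(A^{[*]}-\lambda I)\subset\sH$ is closed and its second coordinate $\lambda f$ depends continuously on the first. The key observation for the decomposition is that
\[
A^{\langle*\rangle}\cap(\sH\times\sH)=A^{[*]}:
\]
when $\ff'\in\sH$, the duality $\langle\ff',h\rangle_{-,+}$ collapses to $[\ff',h]_\sH$ and the condition in~\eqref{eq:bf_A*} becomes the Pontryagin adjoint relation. Given $\begin{bmatrix} f\\\ff'\end{bmatrix}\in A^{\langle*\rangle}$, define $\ff:=\ff'-\lambda f$, $f_0:=\wt R^0_\lambda\ff$, and $f_\lambda:=f-f_0$. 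By (ii) the pair $\begin{bmatrix} f_0\\\ff+\lambda f_0\end{bmatrix}\in\bA_0\subset A^{\langle*\rangle}$, so the difference $\begin{bmatrix} f_\lambda\\\lambda f_\lambda\end{bmatrix}$ again belongs to $A^{\langle*\rangle}$; both its components lie in $\sH$, placing it in $A^{[*]}$ and hence in $\wh\sN_\lambda$. Directness follows from (i): any $\begin{bmatrix} f\\\lambda f\end{bmatrix}\in\bA_0\cap\wh\sN_\lambda$ satisfies $f=\wt R^0_\lambda(\lambda f-\lambda f)=0$. The main obstacle is the conjugate bookkeeping in (i)---ensuring that the identification $\wt R^0_\lambda=(R^0_{\ov\lambda})^{\langle*\rangle}$ and the pair $(h,h')=(R^0_{\ov\lambda}g, g+\ov\lambda h)\in A_0$ are matched correctly so that the cancellation goes through for non-real $\lambda$; once (i) is in hand, (ii) and (iii) follow along transparent lines.
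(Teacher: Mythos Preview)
Your proof is correct and follows essentially the same architecture as the paper's: part (i) via the duality identity $[\wt R^0_\lambda\ff_0,g]_\sH=\langle\ff_0,R^0_{\ov\lambda}g\rangle_{-,+}$ applied to $\wh h=\begin{bmatrix}R^0_{\ov\lambda}g\\ g+\ov\lambda R^0_{\ov\lambda}g\end{bmatrix}\in A_0$, part (ii) from (i), and part (iii) by subtracting the $\bA_0$-component and recognizing the remainder in $\wh\sN_\lambda$. The only tactical variations are that for the inclusion $T_\lambda\subseteq\bA_0$ in (ii) the paper verifies the defining duality of $\bA_0$ directly for arbitrary $\ff\in\sH_-$ using the Hilbert identity rather than your density/closedness argument, and in (iii) the paper checks $[f-g,h'-\ov\lambda h]_\sH=0$ against $A$ explicitly rather than invoking $A^{\langle*\rangle}\cap(\sH\times\sH)=A^{[*]}$; both pairs of alternatives are equivalent and of comparable length.
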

\begin{proof}
(i) is proved similarly to  Lemma~\ref{lem:9.5B}(iii).
\medskip

(ii) Notice first that for every $\omega\in\rho(A_0)$
\begin{equation}\label{eq:A0R}
A_0=\left\{\begin{bmatrix}
 R_\omega^0h \\
h+\omega R_\omega^0h
\end{bmatrix}:\, h\in\sH\right\}.
\end{equation}
Then for all $\ff\in\sH_-$ and $h\in\sH$ we get, using the Hilbert identity
\begin{multline*}
\langle \ff+\lambda\wt R_\lambda^0\ff, R_\omega^0h \rangle_{-,+}
-[\wt R_\lambda^0\ff,h+\omega R_\omega^0h]_\sH\\
=\langle \ff, R_\omega^0h+\ov \lambda R_{\ov\lambda}^0R_\omega^0h \rangle_{-,+}-\langle \ff,R_{\ov\lambda}^0h+\omega R_{\ov\lambda}^0R_\omega^0h\rangle_{-,+}\\
=\langle \ff, R_\omega^0h-R_{\ov\lambda}^0h-(\omega-\ov \lambda) R_{\ov\lambda}^0R_\omega^0h \rangle_{-,+}=0
\end{multline*}
Therefore, $\wh g=\begin{bmatrix}
\wt R_\lambda^0\ff \\
\ff+\lambda\wt R_\lambda^0\ff
\end{bmatrix}\in\bA_0$ for all $\ff\in\sH_-$.

Conversely, let $\wh g=\begin{bmatrix}
g \\
\fg'
\end{bmatrix}\in\bA_0$. By (i), we get for every $\lambda\in\rho(A_0)$
\begin{equation}\label{eq:A0R2}
\wt R_\lambda^0(\fg'-\lambda g)=g.
\end{equation}
Setting $\ff=\fg'-\lambda g$ we obtain $g=\wt R_\lambda^0\ff$ and hence
$
\fg'=\ff+\lambda g=\ff+\wt R_\lambda^0\ff.
$
Therefore, the vector $\wh g$ admits the representation $\wh g=\begin{bmatrix}
\wt R_\lambda^0\ff \\
\ff'+\lambda\wt R_\lambda^0\ff'
\end{bmatrix}$ which proves~\eqref{eq:bf_A0R}.

(iii) For
$\wh f=\begin{bmatrix}
f \\
\ff'
\end{bmatrix}\in A^{\langle *\rangle}$ and $\lambda\in\rho(A_0)$ let us set
\[
g=\wh R_\lambda^0(\ff'-\lambda f),\quad
\fg'=(\ff'-\lambda f)+\lambda\wh R_\lambda^0(\ff'-\lambda f).
\]
Then, by Lemma~\ref{lem:bf_A0} (ii), $\wh g=\begin{bmatrix}
g \\
\fg'
\end{bmatrix}\in \bA_0$ and $ f- g\in\sN_\lambda$ since for all $\begin{bmatrix}
h \\
h'
\end{bmatrix}\in A$
\[
[f-\wt R_\lambda^0(\ff'-\lambda f),h'-\ov\lambda h]_\sH
=[f,h'-\ov\lambda h]_\sH-\langle \ff'-\lambda f, h \rangle_{-,+}=0.
\]
Moreover,
\[
\wt f-\wt g=\begin{bmatrix}
f -g\\
\ff'-\fg'
\end{bmatrix}=\begin{bmatrix}
f -\wt R_\lambda^0(\ff'-\lambda f)\\
\lambda f-\lambda \wt R_\lambda^0(\ff'-\lambda f)
\end{bmatrix}\in\wh\sN_\lambda.
\]
This proves the inclusion $A^{\langle*\rangle}=\bA_0\dotplus\wh\sN_{\lambda}$.
The converse inclusion follows from \eqref{eq:bf_A_A0}.
\end{proof}

\subsection{Extended boundary triples}
\begin{lemma}\label{lem:bf_A0Pi}
Let $\Pi=(\dC^p,\Gamma_0,\Gamma_1)$ be a boundary triple for $A^{[*]}$,
let the $\gamma$-field $\gamma$ and  $\wh\gamma$ be given by~\eqref{eq:11_gamma1}
and let
$R_\lambda^0=(A_0-\lambda I)^{-1}$, $\lambda\in\rho(A_0).$
Then the mapping $\Gamma$ admits a continuation to a bounded mapping
$\wh\Gamma: A^{\langle *\rangle}\to\dC^{2p}$ given by
\begin{equation}\label{eq:wh_Gamma}
 \wh\Gamma_0 \begin{bmatrix}
\wt R_\lambda^0\ff  \\
\ff +\lambda\wt R_\lambda^0\ff
\end{bmatrix}= 0, \quad
 \wh\Gamma_1 \begin{bmatrix}
\wt R_\lambda^0\ff  \\
\ff +\lambda\wt R_\lambda^0\ff
\end{bmatrix}= \gamma(\ov\lambda)^{\langle *\rangle}\ff , \quad \ff \in\sH_-,
\end{equation}
\[
\wh\Gamma\wh\gamma(\lambda)u=\Gamma\wh\gamma(\lambda)u,\quad u\in\dC^p.
\]
Moreover, $\wh\Gamma$ satisfies the relations
\begin{equation}\label{ker_wh_Gamma}
  \ker\wh\Gamma_0=\bA_0,\quad \ker\wh\Gamma_1\cap\ker\wh\Gamma_0 =\bA.
\end{equation}
\end{lemma}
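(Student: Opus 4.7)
The plan is to build $\wh\Gamma$ piecewise, using the direct sum decomposition $A^{\langle *\rangle} = \bA_0 \dotplus \wh\sN_\lambda$ from Lemma~\ref{lem:bf_A0}(iii). On the $\bA_0$-summand, the parametrization $\wh g = \begin{bmatrix}\wt R_\lambda^0\ff \\ \ff+\lambda\wt R_\lambda^0\ff\end{bmatrix}$ of Lemma~\ref{lem:bf_A0}(ii) dictates the definition $\wh\Gamma_0 \wh g:=0$, $\wh\Gamma_1 \wh g := \gamma(\ov\lambda)^{\langle *\rangle}\ff$; on the $\wh\sN_\lambda$-summand, I would set $\wh\Gamma \wh\gamma(\lambda)u := \Gamma\wh\gamma(\lambda)u$. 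Extending linearly through the direct sum then produces a candidate $\wh\Gamma : A^{\langle *\rangle} \to \dC^{2p}$ for which~\eqref{eq:wh_Gamma} holds by construction.

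The first non-trivial point is that the formula on $\bA_0$ does not depend on the choice of $\lambda \in \rho(A_0)$. For this I would combine the $\gamma$-field identity~\eqref{eq:11_gamma2}, the adjoint relation $\wt R_\lambda^0 = (R_{\ov\lambda}^0)^{\langle*\rangle}$ from Lemma~\ref{lem:3.2B}(iv), and the crucial equation $\wt R_\lambda^0(\fg'-\lambda g) = g$ from Lemma~\ref{lem:bf_A0}(i); after dualizing, the cross-terms telescope and one finds $\gamma(\ov\lambda)^{\langle *\rangle}(\fg'-\lambda g) = \gamma(\ov\mu)^{\langle *\rangle}(\fg'-\mu g)$ for all $\lambda, \mu \in \rho(A_0)$. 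That $\wh\Gamma$ actually continues $\Gamma$ across $A^{[*]}$ reduces, via $A^{[*]} = A_0 \dotplus \wh\sN_\lambda$ (Lemma~\ref{lem:11_A*}), to checking agreement on $A_0 \subset \bA_0$: for $\wh h \in A_0$ I would apply Green's identity~\eqref{eq:1.9} to the pair $(\wh h, \wh\gamma(\ov\lambda)u)$, obtaining $\Gamma_1 \wh h = \gamma(\ov\lambda)^{[*]}(h'-\lambda h)$, which matches the new formula since $\gamma(\ov\lambda)^{\langle*\rangle}$ restricted to $\sH$ equals $\gamma(\ov\lambda)^{[*]}$. Boundedness of $\wh\Gamma_1$ on $\bA_0$ is then immediate from $\gamma(\ov\lambda)^{\langle*\rangle} \in \cB(\sH_-, \dC^p)$, and since both summands in the decomposition are closed, $\wh\Gamma$ is bounded on $A^{\langle *\rangle}$.

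For the kernel relations, the inclusion $\bA_0 \subseteq \ker\wh\Gamma_0$ is built in. Conversely, if $\wh f \in \ker\wh\Gamma_0$ and $\wh f = \wh g + \wh f_\lambda$ is its decomposition, then $0 = \wh\Gamma_0 \wh f = \Gamma_0\wh f_\lambda$, and the bijectivity of $\Gamma_0 \colon \wh\sN_\lambda \to \dC^p$ forces $\wh f_\lambda = 0$, so $\wh f \in \bA_0$. For the intersection $\ker\wh\Gamma_0 \cap \ker\wh\Gamma_1 = \bA$, given such an $\wh f$ I have $\wh f \in \bA_0$ with $\gamma(\ov\lambda)^{\langle*\rangle}\ff = 0$ for $\ff = \fg'-\lambda g$; to upgrade membership from $\bA_0$ to $\bA$, the duality condition in~\eqref{eq:bf_S} must hold against every $\wh h \in A^{[*]}$, and using $A^{[*]} = A_0 \dotplus \wh\sN_{\ov\lambda}$ it suffices to test on $\wh h = \wh\gamma(\ov\lambda)u$, which is exactly $\gamma(\ov\lambda)^{\langle*\rangle}\ff = 0$. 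The reverse inclusion $\bA \subseteq \ker\wh\Gamma_0 \cap \ker\wh\Gamma_1$ runs the same identity backwards.

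The principal bookkeeping difficulty will be the $\lambda$-independence check for $\wh\Gamma_1$ on $\bA_0$: one has to track the conjugations that arise when switching between the sesquilinear forms $[\cdot,\cdot]_\sH$, $\langle\cdot,\cdot\rangle_{-,+}$ and $(\cdot,\cdot)_{\dC^p}$, and deploy the identity $\wt R_\lambda^0(\fg'-\lambda g) = g$ at precisely the right moment so that the residual cross-terms cancel. Once that step is clean, the remaining verifications are essentially algebraic consequences of the two decompositions $A^{[*]} = A_0 \dotplus \wh\sN_\lambda$ and $A^{\langle *\rangle} = \bA_0 \dotplus \wh\sN_\lambda$.
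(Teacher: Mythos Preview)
Your proposal is correct and follows essentially the same route as the paper: use the decomposition $A^{\langle *\rangle} = \bA_0 \dotplus \wh\sN_\lambda$ from Lemma~\ref{lem:bf_A0}(iii), define $\wh\Gamma$ on each summand, verify boundedness, and check the kernel identities. You are in fact more thorough than the paper's rather terse proof, since you explicitly address the $\lambda$-independence of $\wh\Gamma_1|_{\bA_0}$ and the agreement of $\wh\Gamma$ with $\Gamma$ on $A^{[*]}$; for the inclusion $\ker\wh\Gamma_0\cap\ker\wh\Gamma_1\subseteq\bA$ the paper takes a slightly shorter path by invoking Lemma~\ref{lem:9.5B}(vi) (so that $\gamma(\ov\lambda)^{\langle*\rangle}\ff=0$ gives $\ff\in\ran(\bA-\lambda I_\sH)$ at once) together with Lemma~\ref{lem:bf_A0}(i), whereas your duality-testing argument amounts to rederiving that lemma inline.
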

\begin{proof}
  1) Since $A^{\langle*\rangle}$, $\bA_0$ and $\wh\sN_{\lambda}$ are closed subspaces of
   $\wh\sH:=\sH\times\sH_-$ and the operators $\Gamma_j\upharpoonright\wh\sN_\lambda$, $j=0,1$ are bounded from $\wh\sH$ to $\dC^p$, it is enough to prove that
    the operators $\Gamma_j\upharpoonright\bA_0$, $j=0,1$ are bounded from $\wh\sH$ to $\dC^p$. For $j=0$ this statement is clear, so let us prove that
     $\Gamma_1\upharpoonright\bA_0$ is bounded as an operator from $\wh\sH$ to $\dC^p$.

Assume that
\[
\wt R_\lambda^0\ff_n\stackrel{\sH}{\longrightarrow}g,\quad
\ff_n +\lambda\wt R_\lambda^0\ff_n\stackrel{\sH_-}{\longrightarrow}\fg'\quad\text{as}\quad n\to\infty
\]
with $\ff_n, \fg'\in\sH_-$, $g\in\sH$. Then
$\ff_n\stackrel{\sH_-}{\longrightarrow}\ff:=\fg'-\lambda g$ and
$\gamma(\ov\lambda)^{\langle *\rangle}\ff_n\stackrel{\dC^p}{\longrightarrow}
\gamma(\ov\lambda)^{\langle *\rangle}\ff$.  This proves that
$\Gamma_1\upharpoonright\bA_0$ is bounded as an operator from $\wh\sH$ to $\dC^p$.

2) The first equality in~\eqref{ker_wh_Gamma} is clear. Assume that
$\wh\Gamma_0\fg=\wh\Gamma_1\fg=0$ for some $\wh g\in A^{\langle*\rangle}$.
Then $\wh g\in \bA_0$ and, by Lemma~\ref{lem:bf_A0} (ii),
\[
\wh g=\begin{bmatrix}
\wt R_\lambda^0\ff  \\
\ff +\lambda\wt R_\lambda^0\ff
\end{bmatrix}\quad\text{for some}\quad \ff\in\sH_-,\quad \lambda\in\rho(A_0).
\]
By~\eqref{eq:wh_Gamma}, we get the equality
$\wh\Gamma_1\wh g=\gamma(\ov\lambda)^{\langle *\rangle}\ff =0$
which, by Lemma~\ref{lem:9.5B}(vi), implies that $\ff\in\ran(\bA-\lambda I_\sH)$.
In view of  Lemma~\ref{lem:bf_A0}(i), we get
$\wh g
\in\bA$.
\end{proof}
The triple $(\dC^p,\wh\Gamma_0,\wh\Gamma_1)$ will be called an {\it extended boundary triple for} $A^{\langle*\rangle}$.
\begin{corollary}\label{cor:ExtGreen}
For all $\wh f=\begin{bmatrix}
f \\
\ff'
\end{bmatrix}\in A^{\langle *\rangle}$ and
$\wh g=\begin{bmatrix}
g \\
g'
\end{bmatrix}\in A^{[*]}$  the following identity holds
\begin{equation}\label{eq:ExtGreen}
  \langle \ff', g \rangle_{-,+}-[ f, g']_\sH
  =(\Gamma_0\wh g)^*(\wh\Gamma_1\wh f)-(\Gamma_1\wh g)^*(\wh\Gamma_0\wh f).
\end{equation}
\end{corollary}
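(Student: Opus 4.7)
The plan is to reduce the extended Green identity \eqref{eq:ExtGreen} to the classical one \eqref{eq:1.9} by using the direct-sum decompositions $A^{\langle*\rangle} = \bA_0 \dotplus \wh\sN_\lambda$ from Lemma~\ref{lem:bf_A0}(iii) and $A^{[*]} = A_0 \dotplus \wh\sN_{\ov\lambda}$ from Lemma~\ref{lem:11_A*}. Fix $\lambda \in \rho(A_0) \cap \dC_+$, noting that $\ov\lambda \in \rho(A_0)$ by the symmetry of the resolvent set of a selfadjoint relation in a Pontryagin space about the real axis. Write
\begin{equation}
\wh f = \wh f_0 + \wh\gamma(\lambda) u, \quad \wh f_0 \in \bA_0,\ u\in\dC^p, \qquad
\wh g = \wh g_0 + \wh\gamma(\ov\lambda) v, \quad \wh g_0 \in A_0,\ v\in\dC^p.
\end{equation}
Both sides of \eqref{eq:ExtGreen} are biadditive in $(\wh f,\wh g)$, so it suffices to verify the identity on the four resulting cross-terms.

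Whenever $\wh f \in \wh\sN_\lambda$ one has $\wh f \in A^{[*]}$, hence $\ff' \in \sH$, $\langle\ff',g\rangle_{-,+} = [f',g]_\sH$, and $\wh\Gamma\wh f = \Gamma\wh f$ by Lemma~\ref{lem:bf_A0Pi}; the two such cases thus reduce immediately to the classical Green identity \eqref{eq:1.9}. For the pair $(\wh f_0, \wh g_0) \in \bA_0 \times A_0$ both sides of \eqref{eq:ExtGreen} vanish: the left-hand side by the defining property \eqref{eq:bf_A0} of $\bA_0$, and the right-hand side because $\wh\Gamma_0 \wh f_0 = 0$ (by \eqref{ker_wh_Gamma}) and $\Gamma_0 \wh g_0 = 0$.

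The only substantial case is $(\wh f_0, \wh\gamma(\ov\lambda) v)$. Representing $\wh f_0 = \begin{bmatrix}\wt R_\lambda^0 \ff \\ \ff + \lambda \wt R_\lambda^0 \ff\end{bmatrix}$ with $\ff \in \sH_-$, formula \eqref{eq:wh_Gamma} gives $\wh\Gamma_0 \wh f_0 = 0$ and $\wh\Gamma_1 \wh f_0 = \gamma(\ov\lambda)^{\langle*\rangle}\ff$, so the right-hand side of \eqref{eq:ExtGreen} reduces to $v^* \gamma(\ov\lambda)^{\langle*\rangle} \ff$. On the left-hand side, expanding
\begin{equation}
\langle \ff + \lambda \wt R_\lambda^0 \ff, \gamma(\ov\lambda) v \rangle_{-,+} - [\wt R_\lambda^0 \ff, \ov\lambda\, \gamma(\ov\lambda) v]_\sH,
\end{equation}
and using that $\wt R_\lambda^0 \ff \in \sH$ so that the duality pairing collapses to $[\,\cdot\,,\,\cdot\,]_\sH$ on this term, the two $\wt R_\lambda^0\ff$-contributions cancel. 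What remains is $\langle \ff, \gamma(\ov\lambda) v\rangle_{-,+}$, which equals $v^* \gamma(\ov\lambda)^{\langle*\rangle} \ff$ by the defining equation \eqref{eq:gamma<*>}, matching the right-hand side. The main point to keep track of is the conjugate-linearity of $\langle\cdot,\cdot\rangle_{-,+}$ in its second argument: this is precisely what forces the choice of $\wh\sN_{\ov\lambda}$ (rather than $\wh\sN_\lambda$) in the decomposition of $\wh g$, so that the factor $\gamma(\ov\lambda)^{\langle*\rangle}$ entering $\wh\Gamma_1\wh f_0$ is paired with the matching $\gamma(\ov\lambda)v$ in $\wh g$ and the cancellation above goes through.
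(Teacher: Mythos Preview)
Your proof is correct. The paper states the corollary without proof, treating it as an immediate consequence of the construction in Lemma~\ref{lem:bf_A0Pi}; your direct verification via the decompositions $A^{\langle*\rangle}=\bA_0\dotplus\wh\sN_\lambda$ and $A^{[*]}=A_0\dotplus\wh\sN_{\ov\lambda}$ is exactly the argument that this setup is designed to enable, and your choice of $\ov\lambda$ for the $\wh g$-decomposition to make the cancellation in the mixed term transparent is the right one. An alternative route, which the paper uses in the proof of the subsequent Lemma~\ref{lem:ExtGreenForm}, is to approximate $\wh f\in\bA_0$ by elements of $A_0$ (taking $\ff_n\in\sH$ with $\ff_n\to\ff$ in $\sH_-$) and pass to the limit using the continuity of $\wh\Gamma$ established in Lemma~\ref{lem:bf_A0Pi}; both approaches are equivalent here.
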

In the following lemma we present an extended version of the abstract Green formula to the set $(A^{\langle*\rangle}+\wh{\sL})\cap \sH^2$, where
$\wh{\sL}=\begin{bmatrix}
       0 \\
       {\sL}
     \end{bmatrix}$ and ${\sL}$ is a subspace of $\sH_-$ such that $\dim\sL=p$.
Let us fix a linear homeomorphism ${L}\in\cB({\dC^p},{\sL})$.
\begin{lemma}\label{lem:ExtGreenForm}
Let  $\wh\Pi=  (\dC^p,\wh\Gamma_0,\wh\Gamma_1) $ be an extended boundary triple for
$A^{\langle*\rangle}$,
let $\bR$ be the regularizer given by~\eqref{eq:cR},
let ${\sL}\subset\sH_-$ be a closed subspace of $\sH_-$, let
$\wh f=\begin{bmatrix}
                 f \\
                 \ff'
               \end{bmatrix}$,
               $\wh g=\begin{bmatrix}
                 g \\
                 \fg'
               \end{bmatrix}\in A^{\langle*\rangle}$
and let $u,v\in{\dC^p}$ be such that $\ff'+Lu, \fg'+Lv\in \sH$.
Then
\begin{enumerate}
  \item [(i)] $f+\wt\bR Lu\in\sH_+$ and $g+\wt\bR Lv\in\sH_+$;
  \item [(ii)] the following equality holds:
\begin{multline}\label{eq:ExtGreen2}
  [\ff'+Lu,g]_\sH-[f, \fg'+Lv]_\sH
  =(\wh\Gamma_0\wh g)^*(\wh\Gamma_1\wh f)-(\wh\Gamma_1\wh g)^*(\wh\Gamma_0\wh f)\\
  +\left\langle f+\wt\bR Lu, Lv\right\rangle_{+,-}-\left\langle Lu,g+\wt\bR Lv\right\rangle_{-,+}.
\end{multline}
\end{enumerate}
\end{lemma}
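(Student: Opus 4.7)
The plan is to reduce both $\wh f$ and $\wh g$ to elements of $A^{[*]}$ by adding carefully chosen elements of $\bA_0=\ker\wh\Gamma_0$, apply the ordinary Green identity of Definition~\ref{def:btriple}, and identify the resulting regularization corrections with the two duality pairings on the right-hand side of \eqref{eq:ExtGreen2}. Setting $\wt\bR':=\tfrac{1}{2}(\wt R_i^0-\wt R_{-i}^0)$, which maps $\sH_-$ into $\sH_+$ by Lemma~\ref{lem:3.2B}(iv), introduce the regularizers
\[
\wh e_u:=\begin{bmatrix}\wt\bR Lu\\ Lu+i\wt\bR' Lu\end{bmatrix},\qquad \wh e_v:=\begin{bmatrix}\wt\bR Lv\\ Lv+i\wt\bR' Lv\end{bmatrix}.
\]
Each of $\wh e_u,\wh e_v$ is the arithmetic mean of the two canonical elements $\begin{bmatrix}\wt R_{\pm i}^0 L\xi\\ L\xi\pm i\wt R_{\pm i}^0 L\xi\end{bmatrix}$ that belong to $\bA_0$ by Lemma~\ref{lem:bf_A0}(ii); hence $\wh e_u,\wh e_v\in\bA_0$.

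For part~(i), consider $\wh f+\wh e_u\in A^{\langle*\rangle}$. Its first component $f+\wt\bR Lu$ lies in $\sH$, and its second $(\ff'+Lu)+i\wt\bR' Lu$ lies in $\sH$ because $\ff'+Lu\in\sH$ by hypothesis and $i\wt\bR' Lu\in\sH_+\subset\sH$. Since both components of $\wh f+\wh e_u$ belong to $\sH$, the definitions \eqref{eq:bf_A*} combined with the identity $\langle f,h\rangle_{-,+}=[f,h]_\sH$ for $f\in\sH$, $h\in\sH_+$ (Lemma~\ref{lem:RiggedPontrSp}(ii)) yield $\wh f+\wh e_u\in A^{\langle*\rangle}\cap(\sH\times\sH)=A^{[*]}$, and consequently $f+\wt\bR Lu\in\dom A^{[*]}=\sH_+$. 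The proof that $g+\wt\bR Lv\in\sH_+$ is identical, via $\wh g+\wh e_v\in A^{[*]}$.

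For part~(ii), apply the Green identity of Definition~\ref{def:btriple} to $\wh f+\wh e_u,\wh g+\wh e_v\in A^{[*]}$. Lemma~\ref{lem:bf_A0Pi} gives $\wh\Gamma_0\wh e_\xi=0$ for $\xi\in\{u,v\}$, and applying \eqref{eq:wh_Gamma} to each of the two $\bA_0$-summands of $\wh e_\xi$ and summing yields $\wh\Gamma_1\wh e_\xi=\tfrac{1}{2}(\gamma(-i)^{\langle*\rangle}+\gamma(i)^{\langle*\rangle})L\xi$. Thus the boundary data of $\wh f+\wh e_u$ is $(\wh\Gamma_0\wh f,\wh\Gamma_1\wh f+\wh\Gamma_1\wh e_u)$, and dually for $\wh g+\wh e_v$. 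Bilinearly expanding both sides isolates the target main term $[\ff'+Lu,g]_\sH-[f,\fg'+Lv]_\sH$ on the left and the Green term $(\wh\Gamma_0\wh g)^*(\wh\Gamma_1\wh f)-(\wh\Gamma_1\wh g)^*(\wh\Gamma_0\wh f)$ on the right, together with cross-terms. Using the adjointness relation $\wt\bR=\bR^{\langle*\rangle}$ (which gives $[h,\wt\bR\ff]_\sH=\langle\bR h,\ff\rangle_{+,-}$) and its analogue for $\wt\bR'$, every cross-term of the form $[\cdot,\wt\bR L\cdot]_\sH$ or $[\cdot,\wt\bR' L\cdot]_\sH$ converts into a duality pairing against $Lu$ or $Lv$; the $\wh\Gamma_1\wh e_\xi$-contributions on the right are already such pairings.

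The decisive step, and the main obstacle, is the final cancellation. Grouping all duality pairings against $Lv$, the task reduces to verifying the operator identity
\[
\bR(\ff'+Lu)-i\bR' f+\tfrac{1}{2}\bigl(\gamma(-i)+\gamma(i)\bigr)\wh\Gamma_0\wh f \;=\; f+\wt\bR Lu
\]
(and the symmetric identity controlling the $Lu$-pairings), with the right-hand side the $\sH_+$-element produced in part~(i). This identity is a manifestation of the Kre\u{\i}n-type resolvent formula tying $\gamma(\lambda)$ to $R_\lambda^0$ and the canonical decomposition $A^{[*]}=A_0\dotplus\wh\sN_i$ of Lemma~\ref{lem:11_A*}; it follows from the resolvent identity $\wt R_i^0-\wt R_{-i}^0=2i\wt R_i^0\wt R_{-i}^0$ together with the $\gamma$-field relation \eqref{eq:11_gamma2}. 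Tracking the signs across the Pontryagin inner product $[\cdot,\cdot]_\sH$, the $\dC^p$-inner product, and the sesquilinear dualities $\langle\cdot,\cdot\rangle_{\pm,\mp}$ is the most delicate bookkeeping aspect of the proof.
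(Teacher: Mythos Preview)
Your approach is correct and genuinely different from the paper's. The paper does not symmetrize: it leaves $\wh f$ alone, decomposes $\wh g$ along $A^{\langle*\rangle}=\bA_0\dotplus\wh\sN_\lambda$, handles the $\wh\sN_\lambda$-part by reducing to the already-established one-sided extended Green formula \eqref{eq:ExtGreen}, and for the $\bA_0$-part approximates $\wh g$ by $\wh g_n\in A_0$, applies \eqref{eq:ExtGreen}, and passes to the limit (the point being that $\bR g_n'-g_n\to\wt\bR\fg'-g$ in $\sH_+$). Your route instead regularizes \emph{both} $\wh f$ and $\wh g$ by the fixed $\bA_0$-elements $\wh e_u,\wh e_v$, lands directly in $A^{[*]}$, and invokes only the classical Green identity---no limits, and part~(i) drops out for free from $\wh f+\wh e_u\in A^{\langle*\rangle}\cap\sH^2=A^{[*]}$. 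The trade-off is that you must verify the operator identity you single out as ``decisive''; it does hold (check it separately on $\bA_0$ via the Hilbert identity \eqref{eq:HilbertId}, and on $\wh\sN_i$ via \eqref{eq:11_gamma2}, then use the splitting \eqref{eq:A+_decom0}), but you have only asserted it. The paper's approximation argument avoids having to isolate such an identity, at the price of a less symmetric and more analytic computation.
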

\begin{proof}
(i) By \eqref{eq:A+_decom0}, for $\lambda\in \rho({A}_0)$ the vector $\wh g$ admits the representation
\[
\begin{bmatrix}
                 g \\
                 \fg'
               \end{bmatrix}=
\begin{bmatrix}
                 g_0 \\
                 g_0'
               \end{bmatrix}
               +\begin{bmatrix}
                 g_\lambda \\
                \lambda g_\lambda
               \end{bmatrix}\quad
\text{for some $\wh g_0=\begin{bmatrix}
                 g_0 \\
                 g_0'
               \end{bmatrix}\in\bA_0$ and
$\wh g_\lambda=\begin{bmatrix}
                 g_\lambda \\
                \lambda g_\lambda
               \end{bmatrix}\in\wh\sN_\lambda$}.
\]
Then
\[
h:=\fg'-\lambda g +Lv=g_0'-\lambda g_0 +Lv\in\sH,
\]
By Lemma~\ref{lem:bf_A0} (i), $\wt R_\lambda^0h=g_0+\wt R_\lambda^0Lv$ and hence
\[
g+\wt\bR Lv=g_0+\wt\bR Lv+g_\lambda=\wt R_\lambda^0h-(\wt R_\lambda^0-\wt\bR) Lv
+g_\lambda\in\sH_+.
\]

\noindent
(ii)
If $\wh g=\wh g_\lambda\in\wh\sN_\lambda$, the equality~\eqref{eq:ExtGreen2} is reduced to~\eqref{eq:ExtGreen}. Therefore, it is enough to prove~\eqref{eq:ExtGreen2} for
$\wh g\in\bA_0$. Let us choose $\wh g_n=\begin{bmatrix}
                 g_n \\
                 g_n'
               \end{bmatrix}\in A_0$ such that $g_n\stackrel{\sH}{\longrightarrow}g$, $g_n'\stackrel{\sH_-}{\longrightarrow}\fg'$.
Then, by Lemma~\ref{lem:bf_A0Pi}, $\Gamma_j\wh g_n\to \wh\Gamma_j\wh g$ for $j=0,1$ and,
 by~\eqref{cor:ExtGreen}, 
 we get
  \begin{equation}\label{eq:ExtGreen3}
  (\Gamma_0\wh g_n)^*(\wh\Gamma_1\wh f)-(\Gamma_1\wh g_n)^*(\wh\Gamma_0\wh f)
  =\langle\ff',g_n\rangle_{-,+}-[f, g_n']_\sH=C_n+D_n
  \end{equation}
  where
  \begin{equation}\label{eq:ExtGreenCn}
C_n=[\ff'+Lu,g_n]_\sH-\langle f+\wt\bR Lu,  g_n'+Lv\rangle_{-,+}
  \end{equation}
and
  \begin{equation}\label{eq:ExtGreenDn}
  \begin{split}
D_n&=\langle f+\wt\bR Lu, Lv\rangle_{+,-}+[\wt\bR Lu,g_n']_\sH-\langle  Lu, g_n\rangle_{-,+}\\
&=\langle f+\wt\bR Lu, Lv\rangle_{+,-}+\langle Lu,\bR g_n'- g_n\rangle_{-,+}
\end{split}
  \end{equation}
Since 
$\begin{bmatrix}
                 g_n \\
                 g_n'-\lambda g_n
               \end{bmatrix}\in A_0-\lambda I$
and $\begin{bmatrix}
                 g \\
                 \fg'-\lambda g
               \end{bmatrix}\in \bA_0-\lambda I$
we get, by Lemma~\ref{lem:bf_A0}(i),
\[
R_\lambda^0( g_n'-\lambda g_n)=g_n,\quad
\wt R_\lambda^0( \fg'-\lambda g)=g,
\]
and hence, by Lemma~\ref{lem:3.2B}(ii),
\[
     \wt R_\lambda^0 g_n'-g_n=\lambda  \wt R_\lambda^0 g_n\stackrel{\sH_+}{\longrightarrow}\lambda \wt R_\lambda^0 g
     = \wt R_\lambda^0 \fg'-g\in\sH_+ ,\quad\text{for all $\lambda\in\rho(A_0)$}.
\]
Therefore,
\[
\bR g_n'-g_n \stackrel{\sH_+}{\longrightarrow}\wt\bR \fg'-g\quad\text{as}\quad n\to\infty.
\]
and so
\[
D_n\to\langle f+\wt\bR Lu, Lv\rangle_{+,-}+\langle Lu, \wt\bR \fg'-g\rangle_{-,+}
\quad\text{as}\quad n\to\infty.
\]
Passing to the limit in \eqref{eq:ExtGreenCn} and \eqref{eq:ExtGreen3} as $n\to\infty$ we obtain
\[
C_n\to
[\ff'+Lu,g]_\sH-[f, \fg'+Lv]_\sH-[\wt\bR Lu,  \fg'+Lv]_\sH
\]
and thus
  \begin{equation}\label{eq:ExtGreen4}
  \begin{split}
  &(\wh\Gamma_1\wh f,\wh\Gamma_0\wh g)_{\dC^p}-(\wh\Gamma_0\wh f,\wh\Gamma_1\wh g)_{\dC^p}
  =[\ff'+Lu,g]_\sH-[f, \fg'+Lv]_\sH\\
  &
 +\langle f+\wt\bR Lu, Lv\rangle_{+,-}+\langle Lu, \wt\bR \fg'-g\rangle_{-,+}
 -
\langle Lu, \wt\bR \fg'+\wt\bR Lv\rangle_{-,+}.
\end{split}
  \end{equation}
The latter equality is equivalent to the equality~\eqref{eq:ExtGreen2}.
\end{proof}

\section{${\sL}$-resolvents}\label{sec:4.1}
Here we introduce the ${\sL}$-resolvents  of a symmetric linear relation $A$ in a Pontryagin space 
 $(\sH,[ \cdot,\cdot]_{\sH})$ and present their description
 via the so-called  ${\sL}$-resolvent matrix.
 In the case of  a Hilbert space symmetric operator $A$ with $n_\pm(A)=1$ these notions
were  originally introduced by   M.G. Kre\u{\i}n~\cite{Kr44}.
\subsection{Gauge ${\sL}$ and mvf's $\cP(\lambda)$, $\cQ(\lambda)$.}
The notion of a ${\sL}$-regular point of a Hilbert space symmetric operator $A$ with $n_\pm(A)=1$ with respect to a proper gauge ${\sL}$, i.e. closed subspace  of $\sH$,
was introduced by M.G. Kre\u{\i}n~\cite{Kr49}.
 This notion was generalized to the case of improper gauge ${\sL}$ that is not contained in $\sH$ by Yu.L. Shmul'yan~\cite{Sh71}, see also \cite{ShTs77}, \cite{LaTe82}.
 In the case when the gauge ${\sL}$ is a proper subspace of a $\pi_\kappa$-space $\sH$ the set of ${\sL}$-resolvents of a symmetric linear operator $A$ in $\sH$ was described in~\cite{D97}, the case of improper gauge  was considered in \cite{KW98} and \cite{DD19}, see also~\cite{DD24}. Here
we are mainly interested in the case when the  gauge ${\sL}$ is  improper, i.e ${\sL}\not\subset\sH$, but ${\sL}\subset\sH_-$ where $\sH_-(\supset \sH)$ is a space of distributions and calculate the $\sL$-resolvent matrix of $A$ in terms of a boundary triple.
\begin{definition}\label{def:L_reg}
Let  $\sH_+\subset\sH\subset\sH_-$ be a rigged Pontryagin space associated with
a symmetric linear relation $A$,
and let ${\sL}$ be a closed subspace of $\sH_-$.
\begin{enumerate}
\item[\rm(i)]  A point $\lambda\in\wh\rho(A)$ is called ${\sL}$-{\it regular} for the operator $A$, if
\begin{equation}\label{eq:Lres2}
    \sH_-=\ran(\bA-{\lambda}I_\sH)\dotplus{\sL}.
\end{equation}
The set of all ${\sL}$-regular points of the operator $A$ is denoted by $\rho(A,{\sL})$.
\item[\rm(ii)] For $\lambda\in\rho(A,{\sL})$ denote by $\Pi_{{\sL}}^\lambda$ the projection in $\sH_-$ onto ${\sL}$ parallel to
$\ran(\bA-{\lambda}I_\sH)$.
\end{enumerate}
\end{definition}
A  closed subspace ${\sL}\subset\sH_-$ with the property that $\rho(A,{\sL})\ne\emptyset$ will be called a gauge of $A$.

\begin{lemma}\label{lem:PQ_prop}
Let  $\sH_+\subset\sH\subset\sH_-$ be a rigged Pontryagin space associated with
a symmetric linear relation $A$,
let ${\sL}$ be a subspace of $A$ of dimension $p$,
let ${L}\in\cB({\dC^p},{\sL})$ be invertible,
and let
$\lambda\in\wh\rho(A)$. Then
\begin{enumerate}
\item[\rm(i)]
$\lambda\in\rho(A,{\sL})\,\Longleftrightarrow  \,$ ${L}^{\langle*\rangle}\!\upharpoonright\!\sN_{\overline\lambda}$ is invertible, i.e. $({L}^{\langle*\rangle}\!\upharpoonright\!\sN_{\overline\lambda})^{-1}\in\cB({\dC^p},\sN_{\overline\lambda})$.
\medskip
\item[\rm(ii)]
The operator function
\begin{equation}\label{eq:P_lambda}
   \cP(\lambda)={L}^{-1}\Pi^\lambda_{\sL}:\sH_-\to\dC^p,\quad \lambda\in\rho(A,\sL),
\end{equation}
 takes values in $\cB(\sH_-,{\dC^p})$ and
$\cP(\lambda)^{\langle*\rangle}=({L}^{\langle*\rangle}\!\upharpoonright\!\sN_{\overline\lambda})^{-1}
\in\cB({\dC^p},\sH_+)$.
\medskip
 \item[\rm(iii)] For  $\lambda\in\rho(A,{\sL})$ we have
\begin{equation}\label{eq:PQ_pr1}
  \cP(\lambda){L}=I_p,\quad
  {L}^{\langle*\rangle} \cP(\lambda)^{\langle*\rangle}=I_{p}.
  \end{equation}

\item[\rm(iv)]
If ${\mathbf R}_{\lambda}$ is a generalized resolvent of $A$ and $\wh{\mathbf R}_{\lambda}$  is a regularized extended generalized resolvent 
holomorphic at $\lambda\in\rho(A,{\sL})$ then
the operator function 
\begin{equation}\label{eq:Q_lambda}
   \cQ(\lambda)={L}^{\langle*\rangle}
   ({\mathbf R}_{\lambda}- \wh{\mathbf R}_{\lambda}\Pi_{{\sL}}^\lambda)
   :\sH\to\dC^p,\quad \lambda\in\rho(A,\sL),
\end{equation}
 takes values in $\cB(\sH,{\dC^p})$ and does not depend on the choice of generalized resolvent ${\mathbf R}_{\lambda}$ and
 $\cQ(\lambda)^{\langle*\rangle}=\cQ(\lambda)^{[*]}\in\cB({\dC^p},\sH)$.
\medskip
\item[\rm(v)] For  $\lambda\in\rho(A,{\sL})$  the operator $\cQ(\lambda)-{L}^{\langle*\rangle}\bR\in\cB(\sH,{\dC^p})$ admits a continuation $\wt\cQ(\lambda)={L}^{\langle*\rangle}\wh{\mathbf R}_{\lambda}(I-{L}\cP(\lambda))\in\cB(\sH_-,{\dC^p})$ and
\begin{equation}\label{eq:PQ_pr2}
  \wt\cQ(\lambda){L}=O_{p},\quad {L}^{\langle*\rangle}\wt\cQ(\lambda)^{\langle*\rangle}={L}^{\langle*\rangle} (\cQ(\lambda)^{\langle*\rangle}-\wt\bR {L})=O_{p}.
\end{equation}
\end{enumerate}
\end{lemma}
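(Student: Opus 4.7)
My plan is to prove the five items in order, using Lemma~\ref{lem:9.5B} (closedness of $\ran(\bA-\lambda I_\sH)$ in $\sH_-$ and identification of its annihilator with $\wh\sN_{\ov\lambda}$) and Lemma~\ref{lem:3.2B} as the main inputs, together with a short duality argument.

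For (i), I would first observe that by Lemma~\ref{lem:9.5B}(iv)--(vi) the subspace $\ran(\bA-\lambda I_\sH)$ is closed in $\sH_-$ with annihilator $\sN_{\ov\lambda}$ in $\sH_+$ under the pairing $\langle\cdot,\cdot\rangle_{-,+}$, so its codimension equals $\dim\sN_{\ov\lambda}=p=\dim\sL$. Hence the decomposition \eqref{eq:Lres2} is equivalent to $\sL\cap\ran(\bA-\lambda I_\sH)=\{0\}$, which by duality becomes the non-degeneracy of the pairing $\dC^p\times\sN_{\ov\lambda}\to\dC$, $(v,h)\mapsto \langle Lv,h\rangle_{-,+}=(v,L^{\langle*\rangle}h)_{\dC^p}$; since both spaces have the same dimension $p$, this is in turn equivalent to invertibility of $L^{\langle*\rangle}\!\upharpoonright\!\sN_{\ov\lambda}$. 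For (ii), the projection $\Pi_\sL^\lambda$ onto $\sL$ along the closed complement $\ran(\bA-\lambda I_\sH)$ is bounded in $\sH_-$, so $\cP(\lambda)=L^{-1}\Pi_\sL^\lambda\in\cB(\sH_-,\dC^p)$. Since $\cP(\lambda)$ vanishes on $\ran(\bA-\lambda I_\sH)$, its adjoint $\cP(\lambda)^{\langle*\rangle}$ takes values in $\sN_{\ov\lambda}$, and from $\cP(\lambda)L=I_p$ one gets $L^{\langle*\rangle}\cP(\lambda)^{\langle*\rangle}=I_p$ by taking the $\langle\cdot,\cdot\rangle_{-,+}$-adjoint, which identifies $\cP(\lambda)^{\langle*\rangle}$ with $(L^{\langle*\rangle}\!\upharpoonright\!\sN_{\ov\lambda})^{-1}$. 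Part (iii) is then an immediate rewriting.

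For (iv), boundedness of $\cQ(\lambda)\in\cB(\sH,\dC^p)$ is clear since ${\mathbf R}_\lambda\in\cB(\sH,\sH_+)$ by Lemma~\ref{lem:3.2B}(ii), $\wh{\mathbf R}_\lambda\in\cB(\sH_-,\sH_+)$ by property (2) after \eqref{eq:cR}, $\Pi_\sL^\lambda\in\cB(\sH_-)$, and $L^{\langle*\rangle}\in\cB(\sH_+,\dC^p)$. Independence of the representing extension is the main issue: using the Krein-type formula from Theorem~\ref{krein}, the difference of two generalized resolvents satisfies ${\mathbf R}_\lambda^{(1)}-{\mathbf R}_\lambda^{(2)}=-\gamma(\lambda)\Delta(\lambda)\gamma(\ov\lambda)^{[*]}$ and $\wh{\mathbf R}_\lambda^{(1)}-\wh{\mathbf R}_\lambda^{(2)}=-\gamma(\lambda)\Delta(\lambda)\gamma(\ov\lambda)^{\langle*\rangle}$ on $\sH$ and $\sH_-$ respectively. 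The decisive fact is that $\gamma(\ov\lambda)^{\langle*\rangle}$ annihilates $\ran(\bA-\lambda I_\sH)$: for $[f;\ff']\in\bA$ and $\xi\in\dC^p$ one has $\bigl(\gamma(\ov\lambda)^{\langle*\rangle}(\ff'-\lambda f),\xi\bigr)_{\dC^p}=\langle \ff'-\lambda f,\gamma(\ov\lambda)\xi\rangle_{-,+}=[f,\ov\lambda\gamma(\ov\lambda)\xi]_\sH-\lambda[f,\gamma(\ov\lambda)\xi]_\sH=0$ by \eqref{eq:bf_S}, because $[\gamma(\ov\lambda)\xi;\ov\lambda\gamma(\ov\lambda)\xi]\in\wh\sN_{\ov\lambda}\subset A^{[*]}$. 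Combined with $\gamma(\ov\lambda)^{\langle*\rangle}|_\sH=\gamma(\ov\lambda)^{[*]}$ and $(I-\Pi_\sL^\lambda)\sH\subset\ran(\bA-\lambda I_\sH)$, this yields $\gamma(\ov\lambda)^{[*]}-\gamma(\ov\lambda)^{\langle*\rangle}\Pi_\sL^\lambda=0$ on $\sH$, so the two choices give the same $\cQ(\lambda)$. The equality $\cQ(\lambda)^{\langle*\rangle}=\cQ(\lambda)^{[*]}$ is immediate since $\cQ(\lambda)$ maps into $\sH$.

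For (v), I would use $\wt{\mathbf R}_\lambda|_\sH={\mathbf R}_\lambda$ and $\wt\bR|_\sH=\bR$, so on $\sH$ one has $\wh{\mathbf R}_\lambda={\mathbf R}_\lambda-\bR$; combined with $\Pi_\sL^\lambda=L\cP(\lambda)$ this gives $\cQ(\lambda)-L^{\langle*\rangle}\bR=L^{\langle*\rangle}\wh{\mathbf R}_\lambda(I-L\cP(\lambda))|_\sH$, and the right-hand side is manifestly defined on all of $\sH_-$, furnishing the continuation $\wt\cQ(\lambda)$. The identities \eqref{eq:PQ_pr2} then reduce to the algebraic observation $(I-L\cP(\lambda))L=L-L\cP(\lambda)L=0$ (using $\cP(\lambda)L=I_p$), applied either on the right or, after taking $\langle\cdot,\cdot\rangle_{-,+}$-adjoints, on the left. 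The main obstacle will be part~(iv): one must keep track of which maps live on $\sH$ versus $\sH_-$, verify that $\gamma(\ov\lambda)^{\langle*\rangle}$ truly kills $\ran(\bA-\lambda I_\sH)$, and confirm that the Krein parametrization persists for the regularized extended resolvent so that the $\tau$-dependent part cancels.
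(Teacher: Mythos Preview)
Your proposal is correct. Parts (i)--(iii) and (v) match the paper's proof essentially verbatim; in fact your treatment of (i) is more complete, since the paper only spells out the implication $\lambda\in\rho(A,\sL)\Rightarrow L^{\langle*\rangle}\!\upharpoonright\!\sN_{\ov\lambda}$ invertible, whereas you also argue the converse via the dimension/duality argument using Lemma~\ref{lem:9.5B}(iv)--(vi).

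The one genuine difference is in (iv). You invoke the Krein formula (Theorem~\ref{krein}) to write the difference of two generalized resolvents as $-\gamma(\lambda)\Delta(\lambda)\gamma(\ov\lambda)^{[*]}$ and then verify that $\gamma(\ov\lambda)^{\langle*\rangle}$ annihilates $\ran(\bA-\lambda I_\sH)$. The paper instead bypasses the Krein formula entirely and uses Lemma~\ref{lem:9.5B}(iii) directly: any extended generalized resolvent $\wt{\mathbf R}_\lambda$ satisfies $\wt{\mathbf R}_\lambda(\ff'-\lambda f)=f$ for $[f;\ff']\in\bA$, so the difference $\wt{\mathbf R}_\lambda-\wt{\mathbf R}_\lambda^{(1)}$ vanishes on $\ran(\bA-\lambda I_\sH)=(I_{\sH_-}-\Pi_\sL^\lambda)\sH_-$, and hence $(\wh{\mathbf R}_\lambda-\wh{\mathbf R}_\lambda^{(1)})(I-\Pi_\sL^\lambda)f=0$ for $f\in\sH$ immediately. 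This is lighter: it does not require $\lambda\in\rho(A_0)$ for any particular boundary triple, and it avoids the bookkeeping you flagged about extending the Krein parametrization to the regularized resolvent. Your route works, but the paper's is the more economical one here.
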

\begin{proof}
  (i)--(iii) For $\lambda\in\rho(A,{\sL})$ we have $\cP(\lambda){L}=I_{p}$ and hence
  ${L}^{\langle*\rangle} \cP(\lambda)^{\langle*\rangle}=I_{p}$. Since $\cP(\lambda)^{\langle*\rangle}{\dC^p}\subseteq \sN_{\overline\lambda}$, this implies that
  actually $\cP(\lambda)^{\langle*\rangle}{\dC^p}=\sN_{\overline\lambda}$ and
  the operator ${L}^{\langle*\rangle}\!\upharpoonright\!\sN_{\overline\lambda}\in\cB(\sN_{\overline\lambda},{\dC^p})$ 
  is invertible.
\medskip

(iv) The definition~\eqref{eq:Q_lambda} of $Q(\lambda)$ is correct since for $f\in\sH$ we have
${\mathbf R}_{\lambda}f\in\sH_+$ and
\[
({\mathbf R}_{\lambda}- \wh{\mathbf R}_{\lambda}\Pi_{{\sL}}^\lambda)f=
 \wh{\mathbf R}_{\lambda}(I-\Pi_{{\sL}}^\lambda)f+\bR f\in\sH_+.
\]
If ${\mathbf R}_{\lambda}^{(1)}$ is another generalized resolvent of $A$ and
$\wh{\mathbf R}_{\lambda}^{(1)}$  is the corresponding regularized extended generalized resolvent 
holomorphic at $\lambda\in\rho(A,{\sL})$ and if
\[
f=(h'-\lambda h)+\fg\quad\text{for some}\quad \begin{bmatrix}
                                                h \\
                                                h'
                                              \end{bmatrix}\in\bA
\quad\text{for some}\quad \fg\in{\sL},
\]
then, by Lemma~\ref{lem:9.5B}(iii), we get
\[
\begin{split}
 {L}^{\langle*\rangle}({\mathbf R}_{\lambda}- \wh{\mathbf R}_{\lambda}\Pi_{{\sL}}^\lambda)f
&-{L}^{\langle*\rangle}({\mathbf R}_{\lambda}^{(1)}-\wh{\mathbf R}_{\lambda}^{(1)}\Pi_{{\sL}}^\lambda)f
  ={L}^{\langle*\rangle}(\wh{\mathbf R}_{\lambda}-\wh{\mathbf R}_{\lambda}^{(1)})(I_{\sH}-\Pi_{{\sL}}^\lambda)f\\
  &= {L}^{\langle*\rangle}(\wh{\mathbf R}_{\lambda}-\wh{\mathbf R}_{\lambda}^{(1)})(h'-\lambda h)
={L}^{\langle*\rangle}(h-h)=0.
\end{split}
\]

(v) Since $\wt\cQ(\lambda)={L}^{\langle*\rangle}\wh{\mathbf R}_{\lambda}(I-\Pi_\sL^\lambda)\in\cB(\sH_-,{\dC^p})$ it holds that
$\wt\cQ(\lambda){L}=O_{p}$ for $\lambda\in\rho(A,{\sL})$.
The second equality in \eqref{eq:PQ_pr2} follows from the first and the equality
$\wt\cQ(\lambda)=\cQ(\lambda)-L^{\langle*\rangle}\wt\bR$.
\end{proof}
In particular, by Lemma~\ref{lem:PQ_prop}(iii), we obtain the following statement.
\begin{corollary}
For all  $\lambda\in\rho(A,{\sL})\cap\rho(A_0)$  the ovf $\cQ(\lambda)$ can be calculated by
\begin{equation}\label{eq:Q0}
  \cQ(\lambda)={L}^{\langle*\rangle}
   ( R^0_{\lambda}- \wh R^0_{\lambda}\Pi_{{\sL}}^\lambda),
\end{equation}
where $R^0_{\lambda}$ is the resolvent of the selfadjoint extension $A_0$ of $A$.
\end{corollary}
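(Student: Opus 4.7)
The plan is to reduce the corollary to a direct specialization of Lemma~\ref{lem:PQ_prop}(iv), exploiting the fact that the canonical resolvent $R^0_\lambda$ is itself a generalized resolvent of $A$.

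First I would observe that the selfadjoint extension $A_0$ of $A$ lives already in $\sH$, so Definition~\ref{def:genres} applies with $\wt\sH = \sH$ and $\wt A = A_0$; the projection $P_\sH$ reduces to the identity, and the generalized resolvent attached to this choice is nothing but $R^0_\lambda = (A_0-\lambda I)^{-1}$, holomorphic on $\rho(A_0)$. By Lemma~\ref{lem:3.2B}(ii)--(iii) it admits a continuation to $\wt R^0_\lambda \in \cB(\sH_-,\sH)$, and the associated regularized extended generalized resolvent prescribed by \eqref{eq:RegExtRes} is therefore $\wh R^0_\lambda = \wt R^0_\lambda - \wt\bR$.

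Second, for any $\lambda \in \rho(A,{\sL})\cap\rho(A_0)$, I would substitute the choices ${\mathbf R}_\lambda = R^0_\lambda$ and $\wh{\mathbf R}_\lambda = \wh R^0_\lambda$ into the defining formula \eqref{eq:Q_lambda} for $\cQ(\lambda)$. Lemma~\ref{lem:PQ_prop}(iv) asserts that the operator obtained by this procedure does not depend on which generalized resolvent is used; consequently the substitution reproduces $\cQ(\lambda)$ itself, yielding precisely the formula~\eqref{eq:Q0}.

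No essential obstacle arises, since the argument is a one-line specialization of the well-definedness statement already established. The only point meriting verification is that $R^0_\lambda$ genuinely falls within the scope of Definition~\ref{def:genres}, which is immediate because a selfadjoint extension of $A$ inside $\sH$ is a particular (canonical) case of a selfadjoint extension in a possibly larger Pontryagin space, and the regularizer $\bR$ of \eqref{eq:cR} is fixed independently of any specific generalized resolvent so that the same $\wt\bR$ appears on both sides of \eqref{eq:RegExtRes} when computing $\wh R^0_\lambda$.
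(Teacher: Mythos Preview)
Your proposal is correct and matches the paper's own justification: the paper simply states that the corollary follows from Lemma~\ref{lem:PQ_prop} (the reference to part (iii) there is evidently a typo for (iv)), and your argument spells out exactly why---namely that $R^0_\lambda$ is itself a canonical generalized resolvent of $A$, so the independence clause in (iv) permits substituting it into \eqref{eq:Q_lambda}.
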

\begin{lemma}
For  ${\lambda}\in\rho(A,{\sL})$ let us set
\begin{equation}\label{eq:whP_whQ}
    \wh\cP(\lambda):=\begin{bmatrix}
                       \cP(\lambda) &
                       \lambda\cP(\lambda)
                     \end{bmatrix}\quad\text{and}\quad
    \wh\cQ(\lambda):=\begin{bmatrix}
                       \cQ(\lambda) &
                       \lambda\cQ(\lambda)+{L}^{\langle*\rangle}
                     \end{bmatrix}.
\end{equation}
Then
\begin{equation}\label{eq:PQ*}
    \wh\cP(\lambda)^{\langle*\rangle}=\begin{bmatrix}
                       \cP(\lambda)^{\langle*\rangle} \\
                       \ov\lambda\cP(\lambda)^{\langle*\rangle}
                     \end{bmatrix}\quad\text{and}\quad
    \wh\cQ(\lambda)^{\langle*\rangle}=\begin{bmatrix}
                       \cQ(\lambda)^{\langle*\rangle} \\
                       \ov\lambda\cQ(\lambda)^{\langle*\rangle}+L
                     \end{bmatrix}, 
\end{equation}
and
\begin{enumerate}
  \item [(i)]  $\wh\cP(\lambda)^{\langle*\rangle}u$ and $\wh\cQ(\lambda)^{\langle*\rangle}u\in A^{\langle*\rangle}$ for all $u\in{\dC^p}$.
  \item [(ii)] For ${\lambda}\in\rho_s(A,{\sL}):=\rho(A,{\sL})\cap\overline{\rho(A,{\sL})}$ the following   direct sum decomposition holds
\begin{equation}\label{eq:A+_decom}
    A^{\langle*\rangle}=\bA\dotplus\wh\cP(\lambda)^{\langle*\rangle}{\dC^p}\dotplus\wh\cQ(\lambda)^{\langle*\rangle}{\dC^p}.
\end{equation}
\end{enumerate}
\end{lemma}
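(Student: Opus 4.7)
\emph{Overview.} The statement has two parts. The explicit adjoint formulas~\eqref{eq:PQ*} and part~(i) are verified by direct computation starting from the block definitions~\eqref{eq:whP_whQ}. For the decomposition~(ii), my plan is to start from the two-term splitting $A^{\langle*\rangle}=\bA_0\dotplus\wh\sN_{\overline\lambda}$ supplied by Lemma~\ref{lem:bf_A0}(iii) (after picking a selfadjoint extension $A_0$ of $A$ with $\overline\lambda\in\rho(A_0)$, possible since $\lambda\in\rho_s(A,\sL)\subset\wh\rho(A)\cap\overline{\wh\rho(A)}$), identify $\wh\sN_{\overline\lambda}$ with $\wh\cP(\lambda)^{\langle*\rangle}\dC^p$, and then refine $\bA_0=\bA\dotplus\wh\cQ(\lambda)^{\langle*\rangle}\dC^p$ by means of the extended boundary operators $\wh\Gamma_j$.

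\emph{Adjoint formulas and part~(i).} The columns displayed in~\eqref{eq:PQ*} follow from the block structure in~\eqref{eq:whP_whQ}: the $\langle*\rangle$-adjoint turns multiplication by $\lambda$ into multiplication by $\overline\lambda$, $(L^{\langle*\rangle})^{\langle*\rangle}=L$ by~\eqref{eq:<*>}--\eqref{eq:<*>2}, and $\cQ(\lambda)^{\langle*\rangle}=\cQ(\lambda)^{[*]}$ by Lemma~\ref{lem:PQ_prop}(iv). For (i), the $\wh\cP$ case is immediate: Lemma~\ref{lem:PQ_prop}(ii) yields $\cP(\lambda)^{\langle*\rangle}u\in\sN_{\overline\lambda}$, hence $\wh\cP(\lambda)^{\langle*\rangle}u\in\wh\sN_{\overline\lambda}\subset A^{[*]}\subset A^{\langle*\rangle}$. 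For $\wh\cQ$, set $g:=\cQ(\lambda)^{[*]}u\in\sH$; condition~\eqref{eq:bf_A*} reduces, after isolating the $\overline\lambda[g,h]_\sH$ term, to
\[
\langle Lu,\,h\rangle_{-,+}=[g,\,h'-\overline\lambda h]_{\sH},\qquad [h,h']^\top\in A.
\]
Substituting the canonical formula~\eqref{eq:Q0} for $\cQ(\lambda)$ and taking its $[*]$-adjoint, with $(R^0_\lambda)^{\langle*\rangle}=\wh R^0_{\overline\lambda}$ and $R^0_{\overline\lambda}(h'-\overline\lambda h)=h$ for $[h,h']^\top\in A\subset A_0$, collapses the right-hand side to the left.

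\emph{Part~(ii).} Combining Lemma~\ref{lem:bf_A0}(iii) with $\wh\sN_{\overline\lambda}=\wh\cP(\lambda)^{\langle*\rangle}\dC^p$ yields $A^{\langle*\rangle}=\bA_0\dotplus\wh\cP(\lambda)^{\langle*\rangle}\dC^p$, so it suffices to prove
\[
\bA_0=\bA\dotplus\wh\cQ(\lambda)^{\langle*\rangle}\dC^p.
\]
For the inclusion $\wh\cQ(\lambda)^{\langle*\rangle}\dC^p\subset\bA_0$, cast $\wh\cQ(\lambda)^{\langle*\rangle}u$ in the parametrization~\eqref{eq:bf_A0R} with $\ff=Lu$; this requires the identity $\cQ(\lambda)^{[*]}u=\wh R^0_{\overline\lambda}Lu$, which comes from taking the $[*]$-adjoint in~\eqref{eq:Q0} and noting that for $f\in\sH$ one has $(I-\Pi_\sL^\lambda)f\in\ran(\bA-\lambda I)$, so that $\wh R^0_\lambda(I-\Pi_\sL^\lambda)f$ is the first component of the corresponding element of $\bA$. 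Directness of the sum then follows from the boundary operators: by~\eqref{ker_wh_Gamma} one has $\bA=\bA_0\cap\ker\wh\Gamma_1$, so $\wh\Gamma_1$ descends to an injection $\bA_0/\bA\hookrightarrow\dC^p$; by~\eqref{eq:wh_Gamma} this injection is surjective (its image $\gamma(\lambda)^{\langle*\rangle}\sH_-$ already contains $\gamma(\lambda)^{[*]}\sH=\dC^p$), giving $\dim(\bA_0/\bA)=p$. Finally, $\wh\Gamma_1\wh\cQ(\lambda)^{\langle*\rangle}u=\gamma(\lambda)^{\langle*\rangle}Lu$, and the operator $\gamma(\lambda)^{\langle*\rangle}L\in\cB(\dC^p)$ is invertible because its $\langle*\rangle$-adjoint $L^{\langle*\rangle}\gamma(\lambda)$ is invertible by Lemma~\ref{lem:PQ_prop}(i) applied at $\overline\lambda\in\rho(A,\sL)$. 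Hence $u\mapsto\wh\cQ(\lambda)^{\langle*\rangle}u$ induces a bijection $\dC^p\to\bA_0/\bA$, completing the decomposition.

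\emph{Main obstacle.} The sharpest technical point is the identity $\cQ(\lambda)^{[*]}u=\wh R^0_{\overline\lambda}Lu$, equivalently the membership $\wh\cQ(\lambda)^{\langle*\rangle}u\in\bA_0$: this pins down the intrinsic form of $\cQ(\lambda)$ and requires careful interplay between the regularizer $\wt\bR$, the $\sH_-$-projection $\Pi_\sL^\lambda$, and the distinction between the extended resolvent of Lemma~\ref{lem:3.2B}(iii) and the regularized extended resolvent~\eqref{eq:RegExtRes}.
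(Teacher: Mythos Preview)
Your treatment of the adjoint formulas and part~(i) follows the paper's line, modulo two typos: the reduction of~\eqref{eq:bf_A*} should yield $\langle Lu,h\rangle_{-,+}=[g,h'-\lambda h]_\sH$ (not $h'-\overline\lambda h$, since $[\cdot,\cdot]_\sH$ is conjugate-linear in the second slot), and the extended resolvent in question is $\wt R^0_{\overline\lambda}$, not the regularized $\wh R^0_{\overline\lambda}$.

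Part~(ii), however, has a genuine gap: the inclusion $\wh\cQ(\lambda)^{\langle*\rangle}\dC^p\subset\bA_0$ is false, so the intermediate splitting $\bA_0=\bA\dotplus\wh\cQ(\lambda)^{\langle*\rangle}\dC^p$ cannot hold. The identity you invoke, $\cQ(\lambda)^{[*]}u=\wt R^0_{\overline\lambda}Lu$, does not follow from~\eqref{eq:Q0}; the regularizer $\wt\bR$ inside $\wh R^0_\lambda=\wt R^0_\lambda-\wt\bR$ does not cancel under the adjoint. In the canonical-systems example (Proposition~\ref{prop:PQ_CanSyst} and~\eqref{eq:Rlambda_delta}) one has
\[
\cQ(\lambda)^*u=-\tfrac{1}{\sqrt2}\,U(\cdot,\overline\lambda)(\cJ+K)u,
\qquad
\wt R^0_{\overline\lambda}Lu=-\tfrac{1}{\sqrt2}\,U(\cdot,\overline\lambda)\bigl(\cJ+\cJ M(\overline\lambda)\cJ\bigr)u,
\]
and these differ by the nonzero defect vector $\tfrac{1}{\sqrt2}U(\cdot,\overline\lambda)\cJ\bigl(\Re M(i)-M(\overline\lambda)\bigr)\cJ u\in\sN_{\overline\lambda}$. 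The same phenomenon reappears in the proof of Theorem~\ref{thm:ResM}(iii), where one finds $\wt R^0_\lambda Lu=\cQ(\overline\lambda)^{\langle*\rangle}u+\cP(\overline\lambda)^{\langle*\rangle}v$ with $v=\wh w_{21}^\#(\lambda)^{-1}\wh w_{11}^\#(\lambda)u$ generically nonzero. In short, $\wh\cQ(\lambda)^{\langle*\rangle}u$ always carries a nontrivial $\wh\sN_{\overline\lambda}$-component in the decomposition~\eqref{eq:A+_decom0}, so your two two-term splittings do not align block-wise.

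The paper's argument avoids $\bA_0$ entirely: given $[f,\ff']^\top\in A^{\langle*\rangle}$, it uses $\overline\lambda\in\rho(A,\sL)$ to write $\ff'-\overline\lambda f=(\fg'-\overline\lambda g)+Lu$ with $[g,\fg']^\top\in\bA$, then checks that $f-g-\cQ(\lambda)^{\langle*\rangle}u\in\ker(A^{\langle*\rangle}-\overline\lambda I)=\sN_{\overline\lambda}=\cP(\lambda)^{\langle*\rangle}\dC^p$ (here $\lambda\in\rho(A,\sL)$ is used), and reads off directness from the resulting explicit formulas for $u,v,g,\fg'$.
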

\begin{proof}
(i)
The inclusion
$\wh\cP(\lambda)^{\langle*\rangle}{\dC^p}\subseteq A^{\langle*\rangle}$ is clear.
Next, for $\begin{bmatrix}
                                               h \\
                                               h'
                                             \end{bmatrix}\in A$ and $u\in{\dC^p}$ we obtain,
by~\eqref{eq:Q_lambda}, \eqref{eq:Q0} and Lemma~\ref{lem:9.5B}(iii),
\[
\begin{split}
  \left\langle h,\ov\lambda \cQ(\lambda)^{\langle*\rangle}u+{L}u\right\rangle_{+,-}- [h',\cQ(\lambda)^{\langle*\rangle}u]_\sH
  & =u^*\left\{{L}^{\langle*\rangle}h-\cQ(\lambda)(h'-\lambda h)\right\} \\
     & =u^*\left\{{L}^{\langle*\rangle}h-{L}^{\langle*\rangle}R_\lambda^0(h'-\lambda h)\right\} =0.
\end{split}
\]
Hence, $\wh\cQ(\lambda)^{\langle*\rangle}{\dC^p}\subseteq A^{\langle*\rangle}$.
\medskip

(ii)
The inclusion $A\dotplus\wh\cP(\lambda)^{\langle*\rangle}{\dC^p}\dotplus \wh\cQ(\lambda)^{\langle*\rangle}{\dC^p}\subseteq A^{\langle*\rangle}$ follows from (i) and~\eqref{eq:bf_A_A0}.

Let us prove the converse inclusion. Let $\begin{bmatrix}
                                            f &  \ff'
                                          \end{bmatrix}^\top\in A^{\langle*\rangle}$.
Since $\ov{\lambda}\in\rho(A,{\sL})$, there exist
$\begin{bmatrix}
                                            g &  \fg'
                                          \end{bmatrix}^\top\in \bA$ and $u\in{\dC^p}$ such that
\begin{equation}\label{eq:11.10}
  \ff'-\overline{\lambda}f=\fg'-\overline{\lambda}g+{L}u.
\end{equation}
It follows from \eqref{eq:11.10} and the relations 
\[
\begin{bmatrix}
f \\  \ff'-\overline{\lambda}f
\end{bmatrix},\
\begin{bmatrix}
g \\  \fg'-\overline{\lambda}g
\end{bmatrix},\
\begin{bmatrix}
\cQ(\lambda)^{\langle*\rangle}u  \\  {L}u
\end{bmatrix}\in A^{\langle*\rangle}-\overline{\lambda}I
\]
that
\begin{equation}\label{eq:11.11}
 \begin{bmatrix}
 f-g-\cQ(\lambda)^{\langle*\rangle}u \\
        0
 \end{bmatrix}\in A^{\langle*\rangle}-\overline{\lambda}I.
\end{equation}
Hence $ f-g-\cQ(\lambda)^{\langle*\rangle}u\in\ker(A^{\langle*\rangle}-\overline{\lambda}I)
=\sN_{\overline{\lambda}}$.
By the assumption ${\lambda}\in\rho(A,{\sL})$ and Lemma~\ref{lem:PQ_prop}(ii), the equality $\cP(\lambda)^{\langle*\rangle}{\dC^p}=\sN_{\overline{\lambda}}$ holds, and hence
there is a vector $v\in{\dC^p}$ such that
\begin{equation}\label{eq:11.10v}
f-g-\cQ(\lambda)^{\langle*\rangle}u=\cP(\lambda)^{\langle*\rangle}v.
\end{equation}
Therefore,
\[
\begin{bmatrix}
f \\
\ff'
\end{bmatrix}
=\begin{bmatrix}
g+\cP(\lambda)^{\langle*\rangle}v+\cQ(\lambda)^{\langle*\rangle}u\\
\fg'+\overline{\lambda}\cP(\lambda)^{\langle*\rangle}v+\overline{\lambda}\cQ(\lambda)^{\langle*\rangle}u+Lu
\end{bmatrix}
\in \bA\dotplus\wh\cP(\lambda)^{\langle*\rangle}{\dC^p}\dotplus\wh\cQ(\lambda)^{\langle*\rangle}{\dC^p}.
\]
Moreover, the decomposition \eqref{eq:A+_decom} is direct, since $u$, $v$, $g$ and $\fg'$ are uniquely determined
by~\eqref{eq:11.10} and~\eqref{eq:11.10v}:
\begin{equation}\label{eq:11.12}
\begin{split}
  u&=\cP(\overline{\lambda})(\ff'-\ov{\lambda} f),\quad
  g=R_{\overline{\lambda}}^0(I-\Pi_{{\sL}}^{\overline{\lambda}})(\ff'-\ov{\lambda} f),\\
 v &={L}^{\langle*\rangle}(f-g),\qquad
 \fg'=\ff'-\overline{\lambda}(f-g)-{L}u.
 \end{split}
\end{equation}
\end{proof}

\subsection{The ${\sL}$-preresolvent matrix}
Let $\widetilde A$ be a self-adjoint extension of $A$ in a possibly  larger Pontryagin space $\wt\sH(\supseteq\sH)$. Consider $A$ as a linear relation $A'$ in $\wt\sH$. Then the adjoint linear relation of $A$ in  $\wt\sH$ is equal to
\[
(A')^{[*]}=\left\{\begin{bmatrix}
                  f+h \\
                  f'+h'
                \end{bmatrix}:\,
                \begin{bmatrix}
                  f\\
                  f'
                \end{bmatrix}\in A^{[*]},\, h,h'\in \wt\sH[-]\sH\right\}.
\]
Let us consider $\sH^\perp:=\wt\sH[-]\sH$ as a subspace of the Hilbert space $\wt\sH$.
Consider $\wt\sH_+:=\dom (A')^{[*]}=\sH_+\oplus \sH^\perp$ as a  Hilbert space endowed with the  norm
\begin{equation}\label{eq:A+Norm2}
  \| \wt f\|^2_{\wt\sH_+}=\|f\|_{\sH_+}^2+\|h\|_\sH^2\quad
\mbox{for }\quad \wt f=f+h,\quad f\in\sH_+,\quad h\in \sH^\perp.
\end{equation}
Let $\sH_-$ be the dual Hilbert space that was  introduced in Lemma~\ref{lem:RiggedPontrSp}.
Then the Hilbert space $\wt\sH_-:=\sH_-\oplus \sH^\perp$ can be treated as a dual space to $\wt\sH_+$
with respect to the duality
\begin{equation}\label{eq:wtHpm}
\langle\ff+f^\perp, h+h^\perp\rangle^{(\wt\sH)}_{-,+}
:=\langle\ff, h\rangle_{-,+}+(f^\perp, h^\perp)_{\wt\sH}\quad
 \text{ for  $\ff\in \sH_-$, $h\in\sH_+$, $f^\perp,h^\perp\in\sH^\perp$}.
\end{equation}
Denote the resolvent of $\wt A$ by ${R}_\lambda(\wt A):=(\wt A-\lambda I_{\wt\sH})^{-1}$ ($\lambda\in\rho(\wt A)$) and let the extended  resolvent  $\wt{R}_\lambda(\wt A)$ of $\wt A$ be defined by
\begin{equation}\label{eq:9.14BGc}
  [\widetilde{R}_\lambda(\wt A)\ff,\wt h]_{\wt \sH}=
  \langle\ff, {R}_{\ov{\lambda}}(\wt A)\wt h\rangle^{(\wt\sH)}_{-,+}
  \quad \textrm{for}\ \wt h\in\wt\sH,\ \wt\ff\in\wt\sH_-,\  \lambda\in\rho(\widetilde A).
\end{equation}
  \begin{lemma}\label{lem:ExtRes_wtA}
  Let  $\widetilde{R}_\lambda(\wt A)$ be the extended  resolvent and
  let $\wt{\mathbf R}_{\lambda}$ be the extended generalized resolvent of $\wt A$. Then
 $\widetilde{R}_\lambda(\wt A)\in\cB(\wt\sH_-,\wt\sH)$
 and $P_{\sH}\wt{R}_\lambda(\wt A)\upharpoonright\sH_-=\wt{\mathbf R}_{\lambda} $
 for all $\lambda\in\rho(\widetilde A)$.
\end{lemma}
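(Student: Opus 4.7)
The plan is to reduce the lemma to two tasks: (1) show $R_\lambda(\wt A)\in\cB(\wt\sH,\wt\sH_+)$, from which boundedness of the extended resolvent follows by duality, and (2) verify the identity $P_\sH\wt R_\lambda(\wt A)\!\upharpoonright\!\sH_-=\wt{\mathbf R}_\lambda$ by a direct chain of pairing manipulations. Note first that the defining formula \eqref{eq:9.14BGc} only makes sense once $R_{\overline\lambda}(\wt A)\wt h\in\wt\sH_+$ and the map $\wt h\mapsto R_{\overline\lambda}(\wt A)\wt h\in\wt\sH_+$ is continuous, so task (1) is essential; given it, $\wt R_\lambda(\wt A)$ is nothing but $R_{\overline\lambda}(\wt A)^{\langle*\rangle}$ with respect to the duality \eqref{eq:wtHpm}, so $\wt R_\lambda(\wt A)\in\cB(\wt\sH_-,\wt\sH)$.

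For task (1), take $\wt g\in\wt\sH$ and set $\wt f=R_\lambda(\wt A)\wt g$, so $\begin{bmatrix}\wt f\\ \wt g+\lambda\wt f\end{bmatrix}\in\wt A\subset(A')^{[*]}$. By the explicit description of $(A')^{[*]}$ given at the start of this subsection there exist $\begin{bmatrix}f\\ f'\end{bmatrix}\in A^{[*]}$ and $h,h'\in\sH^\perp$ with $\wt f=f+h$ and $\wt g+\lambda\wt f=f'+h'$. Then by \eqref{eq:A+Norm} and \eqref{eq:A+Norm2},
\[
\|\wt f\|^2_{\wt\sH_+}=\|f\|_\sH^2+\|P_{\sH_0}Jf'\|_\sH^2+\|h\|_\sH^2.
\]
The first and third terms together equal $\|\wt f\|^2_{\wt\sH}\le\|R_\lambda(\wt A)\|^2\,\|\wt g\|^2_{\wt\sH}$, while the middle term is controlled by $\|f'\|_\sH\le\|f'+h'\|_{\wt\sH}\le(1+|\lambda|\|R_\lambda(\wt A)\|)\|\wt g\|_{\wt\sH}$. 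This yields the required boundedness.

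For task (2), fix $\ff\in\sH_-(\subset\wt\sH_-)$ and $h\in\sH$. Using successively that $P_\sH$ is the $[\,\cdot\,,\,\cdot\,]_{\wt\sH}$-orthogonal projection, the definition \eqref{eq:9.14BGc}, the fact that in the decomposition $\wt\sH_-=\sH_-\oplus\sH^\perp$ the element $\ff$ has zero $\sH^\perp$-component so by \eqref{eq:wtHpm} the pairing collapses to the $\sH_\pm$ pairing, the decomposition $R_{\overline\lambda}(\wt A)h=f_1+h_1$ with $f_1\in\sH_+$, $h_1\in\sH^\perp$, giving $P_\sH R_{\overline\lambda}(\wt A)h=f_1={\mathbf R}_{\overline\lambda}h$, and finally Lemma~\ref{lem:3.2B}(iii) identifying $\wt{\mathbf R}_\lambda$ as the $\langle*\rangle$-adjoint of ${\mathbf R}_{\overline\lambda}$, I would compute
\[
[P_\sH\wt R_\lambda(\wt A)\ff,h]_\sH=[\wt R_\lambda(\wt A)\ff,h]_{\wt\sH}
=\langle\ff,R_{\overline\lambda}(\wt A)h\rangle^{(\wt\sH)}_{-,+}
=\langle\ff,{\mathbf R}_{\overline\lambda}h\rangle_{-,+}
=[\wt{\mathbf R}_\lambda\ff,h]_\sH.
\]
Since this holds for every $h\in\sH$, the asserted operator identity follows.

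The main obstacle is not conceptual but bookkeeping: one must carefully track which duality pairing is being used at each step (the $\langle\,\cdot\,,\,\cdot\,\rangle_{-,+}$ built from $A$ versus the $\langle\,\cdot\,,\,\cdot\,\rangle^{(\wt\sH)}_{-,+}$ built over $\wt\sH$), verify that the orthogonal decompositions $\wt\sH=\sH[+]\sH^\perp$, $\wt\sH_+=\sH_+\oplus\sH^\perp$ and $\wt\sH_-=\sH_-\oplus\sH^\perp$ are compatible with the respective projections, and confirm that an element $\ff\in\sH_-$ really does have trivial $\sH^\perp$-component when regarded in $\wt\sH_-$. Once these compatibilities are spelled out, the argument reduces to the four-line chain of equalities above together with the norm estimate for task (1).
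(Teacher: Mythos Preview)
Your proposal is correct and, for the key identity, follows exactly the same chain of pairings as the paper's proof (which consists only of the four equalities in your display, citing Lemma~\ref{lem:3.2B}(iii) and \eqref{eq:wtHpm}). Your task~(1) supplies a boundedness argument that the paper omits entirely; this is a welcome addition, and the norm estimate you give goes through once you note that the projections onto $\sH$ and $\sH^\perp$ in the decomposition $\wt\sH=\sH[+]\sH^\perp$ are bounded (since $\sH$ is a regular subspace of the Pontryagin space $\wt\sH$).
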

\begin{proof}
  By Lemma~\ref{lem:9.5B}(iii) and \eqref{eq:wtHpm}, we get for $\ff\in\sH_-$, $h\in\sH$
\[
[\wt{R}_\lambda(\wt A)\ff,h]_{\wt\sH}=\langle\ff, {R}_{\ov{\lambda}}(\wt A)h\rangle^{(\wt\sH)}_{-,+}
=\langle\ff, {\mathbf R}_{\overline{\lambda}} h\rangle_{-,+}
=[\wt{\mathbf R}_{\lambda}\ff,h]_{\sH}.
\]
\end{proof}

\begin{definition}\label{def:11.Pl-res,matr}
Let ${\mathbf R}_{\lambda}$ be a generalized resolvent of $A$ of index $\wt\kappa$, let $\wh{\mathbf R}_{\lambda}$ be a regularized extended generalized resolvent
and let  ${\sL}\subset\sH$ be a gauge for $A$.  The
 operator function
\begin{equation}\label{eq:G_res}
r(\lambda):={L}^{\langle*\rangle}\wh{\mathbf R}_{\lambda}{L}={L}^{\langle*\rangle}(\wt{\mathbf R}_{\lambda}-\bR){L}
\end{equation}
is called a ${\sL}${\bf-resolvent} of $A$. Let
\begin{equation}\label{eq:9.10K}
  \sH':=\overline{\textup{span}}\left\{
  \widetilde{R}_{\omega}(\wt A){\sL}:\,\omega\in\rho(\wt A)\right\}.
\end{equation}
The ${\sL}$-resolvent ${L}^{\langle*\rangle}\wh{\mathbf R}_{\lambda}{L}$ is said to be of index $\kappa'$, if $\textup{ind}_-(\sH')=\kappa'(\le\wt\kappa)$.
A selfadjoint extension $\wt A$ of $A$ and the ${\sL}$-resolvent ${L}^{\langle*\rangle}\wh{\mathbf R}_{\lambda}{L}$
are called ${\sL}${\bf-regular} if $\kappa'=\wt\kappa$, i.e.,
 the negative index of the space $\sH'$
 coincides with the negative index of the space $\wt \sH$ in the minimal representation of the generalized resolvent ${\mathbf R}_\lambda$.
  \end{definition}
\begin{lemma}
  \label{lem:9.6}
Let $\wh{\mathbf R}_{\lambda}$ be a regularized extended generalized resolvent  of $A$ of index $\wt\kappa$ with the representing relation $\wt A$
in a $\pi_{\wt\kappa}$-space
$(\wt\sH,[ \cdot,\cdot]_{\wt\sH})$,  let $\sH'$ be defined by~\eqref{eq:9.10K}, $\mbox{ind}_-(\sH')=\kappa'$,
 and let $r(\lambda):={L}^{\langle*\rangle}\wh{\mathbf R}_{\lambda}{L}$ be given by~\eqref{eq:G_res}.
Then:
\begin{enumerate}
  \item [\rm(i)] $r\in\cN_{\kappa'}^{\ptp}$  $(0\le\kappa'\le\wt\kappa)$.
  \item [\rm(ii)] $r\in\cN_{\wt\kappa}^{\ptp}$  if and only if  the
   representing relation $\wt A$ is ${\sL}$-regular.
\end{enumerate}
\end{lemma}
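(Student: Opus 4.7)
The plan is to compute the Nevanlinna kernel of $r$ and identify it with the Gram kernel in $\wt\sH$ of the family $\{\wt R_\mu(\wt A)Lu\}$, then invoke the standard equality between the negative squares of such a Gram kernel and the negative index of the Pontryagin subspace spanned by its generating vectors. Since, by \eqref{eq:9.10K}, this span is exactly $\sH'$ with $\mbox{ind}_-\sH'=\kappa'$, both claims will fall out.

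By Lemma~\ref{lem:ExtRes_wtA}, $\wt{\mathbf R}_\lambda=P_\sH\wt R_\lambda(\wt A)\!\upharpoonright\!\sH_-$, and since $\wt A$ is selfadjoint in $\wt\sH$ one has $\wt R_\lambda(\wt A)^{[*]}=\wt R_{\ov\lambda}(\wt A)$. The regularizer $\wt\bR=\bR^{\langle*\rangle}$, built from the selfadjoint $\bR=\frac{1}{2}(R_i^0+R_{-i}^0)$, is $\lambda$-independent and contributes a Hermitian piece to $r(\lambda)$; hence condition (b) of Definition~\ref{def:Nk-class} is immediate, and the $\wt\bR$-terms cancel from $r(\lambda)-r(\omega)^*$. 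Combining this with the Hilbert identity
\[
\wt R_\lambda(\wt A)-\wt R_{\ov\omega}(\wt A)=(\lambda-\ov\omega)\,\wt R_\lambda(\wt A)\,\wt R_{\ov\omega}(\wt A),\quad \lambda,\omega\in\rho(\wt A),
\]
and pairing through $Lu,Lv\in\sL\subset\sH_-\subset\wt\sH_-$ (using the compatibility~\eqref{eq:wtHpm} of the two riggings), I would arrive at
\[
\bigl(\mathsf{N}_\omega^r(\lambda)u,v\bigr)_{\dC^p}=\bigl[\wt R_{\ov\omega}(\wt A)Lu,\ \wt R_{\ov\lambda}(\wt A)Lv\bigr]_{\wt\sH},\quad \lambda\ne\ov\omega,
\]
with the usual extension to $\lambda=\ov\omega$ by differentiation.

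Any finite Gram matrix $[u_k^*\mathsf{N}_{\omega_k}^r(\omega_j)u_j]_{j,k=1}^n$ is now the Gram matrix of vectors $\wt R_{\ov\omega_j}(\wt A)Lu_j$ lying in the Pontryagin subspace $\sH'$, so it has at most $\mbox{ind}_-\sH'=\kappa'$ negative eigenvalues; hence $\mbox{sq}_-(\mathsf{N}^r)\le\kappa'$. Conversely, a basis of a maximal $\kappa'$-dimensional negative subspace of $\sH'$ can be approximated by finite linear combinations $\sum_j c_{lj}\wt R_{\ov\omega_j}(\wt A)Lu_j$, and a congruence argument then shows that some Gram matrix arising from the kernel carries at least $\kappa'$ negative eigenvalues. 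This establishes $r\in\cN_{\kappa'}^{\ptp}$ and proves (i); part (ii) is then immediate from Definition~\ref{def:11.Pl-res,matr}, according to which $\wt A$ is $\sL$-regular precisely when $\kappa'=\wt\kappa$, equivalent by (i) to $r\in\cN_{\wt\kappa}^{\ptp}$.

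The main obstacle is the careful bookkeeping of the two riggings $\sH_+\subset\sH\subset\sH_-$ and $\wt\sH_+\subset\wt\sH\subset\wt\sH_-$ and their compatibility~\eqref{eq:wtHpm}: this is needed to apply the extended Hilbert identity in $\wt\sH$ to vectors $Lu\in\sL\subset\sH_-$ and, in particular, to verify that $L^{\langle*\rangle}\wt\bR L$ enters $r$ only as a $\lambda$-independent Hermitian term that drops out of the kernel. Once this technical identification is pinned down, the reduction to a Gram-kernel computation inside $\sH'$ is routine.
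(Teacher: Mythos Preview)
Your proposal is correct and follows essentially the same route as the paper: compute the Nevanlinna kernel of $r$ via the Hilbert identity for the extended resolvent $\wt R_\lambda(\wt A)$, identify it with the Gram kernel of the family $\{\wt R_\omega(\wt A)Lu\}$ in $\wt\sH$, and read off $\mbox{sq}_-(\mathsf N^r)=\mbox{ind}_-\sH'=\kappa'$. The paper compresses all of this into the single display \eqref{eq:wtF1}; your more explicit handling of the regularizer cancellation and of the two riggings just fills in the bookkeeping that the paper leaves implicit. One cosmetic point: your displayed identity carries $\wt R_{\ov\omega},\wt R_{\ov\lambda}$ where the paper has $\wt R_\lambda,\wt R_\omega$, but since $\rho(\wt A)$ is conjugation-symmetric and the span in \eqref{eq:9.10K} runs over all of $\rho(\wt A)$, this is immaterial for either conclusion.
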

\begin{proof}
If  $\lambda,\omega\in\rho(\wt A)$ and $u,v\in\dC^p$, then \begin{equation}\label{eq:wtF1}
  \begin{split}
u^*\frac{r(\lambda)-r(\omega)^*}{\lambda-\ov{\omega}}v
       &= \frac{1}{\lambda-\ov{\omega}}
       \left[ (\wt {R}_\lambda(\wt A) -
       \wt {R}_{\ov\omega}(\wt A)){L}v,{L}u\right]_{\wt\sH}\\
 &=[\wt{R}_\lambda(\wt A) {L}v, \wt {R}_{{\omega}}(\wt A){L}u]_{\wt\sH},
    \end{split}
  \end{equation}
where $\wt {R}_\lambda(\wt A)$ is the extended resolvent of $A$.  Therefore, (i) holds.

The ${\sL}-$resolvent $r(\lambda)$ is  ${\sL}$-regular if and only if $\mbox{ind}_-(\sH')=\wt\kappa$ which, in view of \eqref{eq:wtF1},
  is equivalent to $r\in\cN_{\wt\kappa}^{\ptp}$. This proves (ii).
\end{proof}
\begin{lemma}\label{lem:PreresM}
Let $A$ is a closed symmetric linear relation with equal
defect  numbers $n_{\pm}(A)=p<\infty$ in a $\pi_\kappa$-space
$(\sH,[ \cdot,\cdot]_{\sH})$,
let $\sL$ be a gauge for $A$,
let $\Pi=(\dC^p,\Gamma_0,\Gamma_1)$ be a boundary triple for
$A^{*}$, let $M(\cdot)$ and $\gamma(\cdot)$  be the corresponding Weyl
function and  $\gamma$-field, respectively,
let $A_0 = 
\ker\Gamma_0$, let $\lambda\in\rho(A_0)$
and let $\wh R_\lambda^0=\wt R_\lambda^0-\bR$ be the  regularized extended  resolvent of $A_0$.
Then  the $\dC^{2p\times 2p}$--valued
 function ${\mathfrak  A}_{\Pi{\sL}}({\cdot})$  defined by
\begin{equation}\label{eq:Lres2A}
 {\mathfrak A}_{\Pi{\sL}}(\lambda) = ({\mathfrak a}_{ij}(\lambda))_{i,j=1}^2  :=
    \begin{bmatrix} M(\lambda) & \gamma(\ov\lambda)^{\langle*\rangle}{L}\\
    {L}^{\langle*\rangle}\gamma(\lambda)& {L}^{\langle*\rangle}\wh R_\lambda^0 {L}
    \end{bmatrix},
\end{equation}
has the following properties:
\begin{enumerate}
\item[\rm(i)] 
 For all ${\lambda},\omega\in\rho(A_0)$ 
\[
    {\mathsf{N}}^{{\sA}_{\Pi{\sL}}}_\omega({\lambda})
    := \frac{{\mathfrak A}_{\Pi{\sL}}({\lambda})
    - {\mathfrak A}_{\Pi{\sL}} (\omega)^*}{{\lambda} - \overline{\omega}}
    = T(\omega)^*T({\lambda}),\quad 
T({\lambda})=\begin{bmatrix}
             \gamma({\lambda}) &
             \wt R_\lambda^0 {L}
           \end{bmatrix}.
\]
and ${\mathfrak A}_{\Pi{\sL}}\in \cN_{\kappa_1}^{p\times p}$ for some $\kappa_1\le\kappa$.
\item[\rm(ii)]
${\mathfrak A}_{\Pi{\sL}}\in \cN_\kappa^{p\times p}$ if and only if
\begin{equation}\label{eq:A_N_k}
\kappa_-(\sH_\mathfrak A)=\kappa,\quad \text{ where }\quad
\sH_\mathfrak A:=\overline{\textup{span}}\left\{T({\lambda}){\sL}:\,\lambda\in\rho(A_0)\right\}.
\end{equation}
This happens, in particular, if either $A$ is a simple symmetric operator or the extension $A_0$  is ${\sL}$-regular.
 \medskip
\item[\rm(iii)]
${\lambda}\in\rho( A, {\sL})\Longleftrightarrow a_{21}({\ov\lambda})^{-1}\in\dC^{p\times p}\Longleftrightarrow a_{12}({\lambda})^{-1}\in\dC^{p\times p}$.
  \medskip
\item[\rm(iv)]   If $\kappa=0$, then ${\mathfrak A}_{\Pi{\sL}}\in \cR^{p\times p}=\cN_0^{p\times p}$.
\end{enumerate}
\end{lemma}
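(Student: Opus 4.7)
The plan is to prove parts (i)--(iv) around a single factorization identity
\[
\mathsf{N}^{\mathfrak{A}_{\Pi\sL}}_\omega(\lambda) \;=\; T(\omega)^{[*]}T(\lambda), \qquad T(\lambda)=\bigl[\gamma(\lambda),\ \wt R_\lambda^0 L\bigr]\colon \dC^{2p}\to\sH,
\]
which, once verified, makes each of the four conclusions routine. The crux for (i) is the unified ``resolvent-like'' identity $T(\lambda)=T(\omega)+(\lambda-\omega)R^0_\lambda T(\omega)$, obtained by combining the $\gamma$-field recursion~\eqref{eq:11_gamma2} with the extended Hilbert identity from Lemma~\ref{lem:3.2B}(iv) applied to $\wt R_\lambda^0 L$. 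I will verify the four blocks of $\mathsf N^{\mathfrak A}_\omega(\lambda)$ separately: block $(1,1)$ is \eqref{Eq:11.8M}; block $(2,2)$ follows from Lemma~\ref{lem:3.2B}(iv), sandwiched between $L^{\langle*\rangle}$ and $L$, where the regularizer $\wt{\bR}$ cancels in the difference $\wh R_\lambda^0-(\wh R_\omega^0)^*$; blocks $(1,2)$ and $(2,1)$ come from passing to the $\langle *\rangle$-adjoint in $\gamma(\bar\lambda)-\gamma(\omega)=(\bar\lambda-\omega)R^0_{\bar\lambda}\gamma(\omega)$, using that $(R^0_{\bar\lambda})^{\langle[*]\rangle}=\wt R^0_\lambda$ (a consequence of self-adjointness of $A_0$ together with the duality~$\langle\cdot,\cdot\rangle_{-,+}$). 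The factor $\overline{(\bar\lambda-\omega)}=\lambda-\bar\omega$ produced by the adjoint is precisely the Nevanlinna denominator, so the identity is clean. Once the factorization is established, the fact that $T(\lambda)$ maps into the $\pi_\kappa$-space~$\sH$ yields $\sq_-\mathsf N^{\mathfrak A}\le\kappa$, i.e.\ $\mathfrak A_{\Pi\sL}\in\cN_{\kappa_1}^{p\times p}$ for some $\kappa_1\le\kappa$.

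For part (ii), I will invoke the standard reproducing-kernel Pontryagin space argument from~\cite{KL71}: when $\mathsf N^{\mathfrak A}_\omega(\lambda)=T(\omega)^{[*]}T(\lambda)$, the number of negative squares of the kernel equals the negative index of the closed linear span of the ranges of $T(\lambda)$, which is precisely $\sH_{\mathfrak A}$. This yields the stated equivalence. For the two sufficient conditions: if $A$ is simple then $\overline{\spn}\{\sN_\lambda\}=\sH$ by \eqref{eq:simple_A}, and since $\gamma(\lambda)\dC^p\subset T(\lambda)\dC^{2p}$ we get $\sH_{\mathfrak A}\supseteq\sH$; if $A_0$ is $\sL$-regular, then already $\overline{\spn}\{\wt R_\omega^0\sL:\omega\in\rho(A_0)\}\subseteq\sH_{\mathfrak A}$ has negative index $\kappa$ by Definition~\ref{def:11.Pl-res,matr}. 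In both cases $\kappa_-(\sH_{\mathfrak A})=\kappa$.

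For (iii), I combine Lemma~\ref{lem:PQ_prop}(i) with the fact that $\gamma(\bar\lambda)\in\cB(\dC^p,\sN_{\bar\lambda})$ is a bijection for $\bar\lambda\in\rho(A_0)$ (Lemma~\ref{lem:11_A*}). Since $\mathfrak a_{21}(\bar\lambda)=L^{\langle*\rangle}\gamma(\bar\lambda)$, invertibility of this matrix is equivalent to invertibility of $L^{\langle*\rangle}\!\upharpoonright\!\sN_{\bar\lambda}$, which by Lemma~\ref{lem:PQ_prop}(i) is equivalent to $\lambda\in\rho(A,\sL)$. The block-wise symmetry $\mathfrak A_{\Pi\sL}(\bar\lambda)=\mathfrak A_{\Pi\sL}(\lambda)^*$, read off directly from the definitions in \eqref{eq:Lres2A}, gives $\mathfrak a_{12}(\lambda)=\mathfrak a_{21}(\bar\lambda)^*$, so the three conditions in (iii) are equivalent. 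Part (iv) is immediate from (i) and the symmetry: when $\kappa=0$, the space $\sH$ is Hilbert and $T(\omega)^{*}T(\lambda)$ is a Gram-type kernel with $\sq_-=0$, placing $\mathfrak A_{\Pi\sL}$ in $\cR^{p\times p}=\cN_0^{p\times p}$.

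The main obstacle is the careful bookkeeping of the three different adjoint operations that appear in the rigged Pontryagin setting: the Kre\u{\i}n adjoint $[*]$ on $\sH$, the duality adjoint $\langle*\rangle$ pairing $\sH_\pm$, and the ordinary Hilbert adjoint on $\dC^p$. In particular, verifying the cross blocks $(1,2)$ and $(2,1)$ requires tracking the complex conjugate produced by the adjoint, namely $\overline{(\bar\lambda-\omega)}=\lambda-\bar\omega$, so that the Nevanlinna denominator appears correctly, and ensuring that $(R^0_{\bar\lambda})^{\langle[*]\rangle}$ extends to $\wt R^0_\lambda\in\cB(\sH_-,\sH)$ rather than just to $R^0_\lambda\in\cB(\sH,\sH_+)$. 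Once these technicalities are settled, the rest of the proof is a routine application of the reproducing-kernel formalism together with the results already established in Lemma~\ref{lem:PQ_prop} and Theorem~\ref{P:11.5}.
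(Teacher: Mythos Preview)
Your proposal is correct and follows essentially the same route as the paper's proof: block-by-block verification of the factorization $\mathsf N^{\mathfrak A}_\omega(\lambda)=T(\omega)^{[*]}T(\lambda)$ using \eqref{Eq:11.8M}, \eqref{eq:11_gamma2}, and the extended Hilbert identity \eqref{eq:HilbertId}, followed by the Gram-matrix argument \eqref{eq:form_NG} for (i)--(ii), and Lemma~\ref{lem:PQ_prop}(i) together with the bijectivity of $\gamma(\bar\lambda)$ for (iii). Your additional remarks on the bookkeeping of the three adjoints $[*]$, $\langle*\rangle$, $*$ and on the conjugation $\overline{(\bar\lambda-\omega)}=\lambda-\bar\omega$ in the cross blocks make explicit exactly the points the paper leaves implicit, but the argument is the same.
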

\begin{proof}
(i) It follows from  \eqref{eq:Lres2A}, \eqref{eq:11_gamma2} and \eqref{Eq:11.8M} that for all ${\lambda},\omega\in\rho(A_0)$
\[
\begin{split}
{\mathsf{N}}_{\mathfrak A}({\lambda},\omega)  &=\frac{1}{\lambda -\ov {\omega}}
\begin{bmatrix}
M({\lambda})-M(\ov{\omega}) &
(\gamma (\ov{\lambda})^*-\gamma (\omega)^*){L}\\
{L}^{\langle*\rangle}({\gamma}({\lambda})-{\gamma}(\ov{\omega})) &
\,\,{L}^{\langle*\rangle}(\wt R_\lambda^0-\wt R_{\ov\omega}^{0})
{L}
\end{bmatrix}\\
& =\begin{bmatrix}
\gamma (\omega)^*\gamma({\lambda}) &
\gamma (\omega)^*\wt R_\lambda^0{L}\\
{L}^{\langle*\rangle}R_{\ov\omega}^{0}\gamma({\lambda})&
\,\,{L}^{\langle*\rangle}R_{\ov\omega}^{0}\wt R_\lambda^0{L}
\end{bmatrix}
=T(\omega)^*T({\lambda}).
\end{split}
\]
This implies that for  $\lambda_i\in\rho(A_0)$, $u_i\in{\dC^p}$, $\xi_i\in\dC$, $i\in\{1,\dots,n\}$
 \begin{equation}\label{eq:form_NG}
 \sum_{i,j=1}^n u_j^*{\mathsf{N}}_{{\sA}_{\Pi{\sL}}}({\lambda_i},\lambda_j)u_i
 \xi_i\ov{\xi_j}
 = \sum_{i,j=1}^n\left[T(\lambda_i)u_i,T(\lambda_j)u_j\right]_\sH\xi_i\ov{\xi_j},
 \end{equation}
 and hence the negative index $\kappa_-({\mathsf{N}}_{{\sA}_{\Pi{\sL}}})$ of the kernel ${\mathsf{N}}_{{\sA}_{\Pi{\sL}}}({\lambda},\omega)$ coincides with $\kappa_1:=\kappa_-(\sH_\mathfrak A)$.
\medskip

 (ii) The first statement in (ii) follows from~\eqref{eq:form_NG}.
 Clearly,  each of the conditions: either~\eqref{eq:simple_A} or
 \[
 \kappa_-(\sH_0)=\kappa,\quad \text{ where }\quad  \sH_0:=\overline{\textup{span}}\left\{\widetilde{ R}^0_\omega{\sL}:\,\omega\in\rho(A_0)\right\},
 \]
 implies that $\kappa_-(\sH_\mathfrak A)=\kappa$ and so ${\mathfrak A}_{\Pi{\sL}}\in \cN_\kappa^{p\times p}$.
\medskip

(iii)
If  ${\lambda}\in\rho(A,{\sL})$, then, by Lemma~\ref{lem:PQ_prop}(i), the operator ${L}^{\langle*\rangle}\upharpoonright\!{\sN_{\overline{\lambda}}}$ is an isomorphism from $\sN_{\overline{\lambda}}$ onto ${\dC^p}$.  Therefore, the operator $a_{21}({\overline{\lambda}})={L}^{\langle*\rangle}\gamma(\overline{\lambda})
=\left({L}^{\langle*\rangle}\!\upharpoonright\!{\sN_{\overline{\lambda}}}\right) \gamma(\overline{\lambda})$ is an isomorphism in $\dC^p$. This implies that $a_{12}({\lambda})^{-1}\in\dC^{p\times p}$, since $a_{12}({\lambda})=a_{21}({\overline{\lambda}})^*$.

Conversely, if $a_{12}({\lambda})^{-1}\in\dC^{p\times p}$, then $a_{21}({\overline{\lambda}})=a_{12}({\lambda})^*={L}^{\langle*\rangle}\gamma(\overline{\lambda})$  is an isomorphism in $\dC^p$. Hence,
${L}^{\langle*\rangle}\!\upharpoonright\!\sN_{\overline\lambda}$ is invertible
and, by Lemma~\ref{lem:PQ_prop}(i),
 ${\lambda}\in\rho(A,{\sL})$.
\end{proof}
\begin{definition} \label{def:LpreresM}
The $\dC^{2p\times 2p}$--valued function ${\mathfrak
A}_{\Pi{\sL}}({\lambda})$  defined by~\eqref{eq:Lres2A}
is called the ${\sL}$-preresolvent matrix of $A$ corresponding to the boundary triple $\Pi$, or, shortly,  the $\Pi{\sL}$-preresolvent matrix of $A$.

\end{definition}
\index{$\Pi{\sL}$-preresolvent matrix}
  The following lemma provides a description of ${\sL}$-regular ${\sL}$-resolvents of $A$.
\begin{lemma}\label{lem:Gres_F1}
Let the assumptions of Lemma~\ref{lem:PreresM}  hold. Then
\begin{enumerate}
\item[\rm(i)] The formula
\begin{equation}\label{eq:11.Lres1}
    {L}^{\langle*\rangle}\wh{\mathbf R}_{\lambda}{L}={\mathfrak a}_{22}({\lambda})-
    {\mathfrak a}_{21}({\lambda})(\tau({\lambda})+{\mathfrak a}_{11}({\lambda}))^{-1}
    {\mathfrak a}_{12}({\lambda}),\quad z\in\dC\setminus\dR,
\end{equation}
establishes a bijective correspondence between the set 
of ${\sL}$-regular ${\sL}$-resolvents of $A$ of index $\wt\kappa$ and the set of
 $\tau\in\wt\cN_{\wt\kappa-\kappa}^{\ptp}$ such that
 ${L}^{\langle*\rangle}\wh{\mathbf R}_{\lambda}{L}\in \cN_{\wt\kappa}^{p\times p}$.
  \medskip
\item[\rm(ii)] Condition \eqref{eq:A_N_k} is necessary for the existence of an ${\sL}$-regular
${\sL}$-resolvent of $A$.
\end{enumerate}
\end{lemma}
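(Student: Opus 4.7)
The plan is to obtain formula~\eqref{eq:11.Lres1} by extending the Kre\u{\i}n resolvent formula~\eqref{gres0} from $\cB(\sH)$ to $\cB(\sH_-,\sH)$ and then sandwiching with $L^{\langle*\rangle}$ and $L$; the bijection in (i) then follows by combining Theorem~\ref{krein} with Lemma~\ref{lem:9.6} and the invertibility statement in Lemma~\ref{lem:PreresM}(iii), while (ii) is read off from~\eqref{eq:11.Lres1} by counting negative squares in a Schur-complement identity.

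First, I would note that by~\eqref{eq:gamma<*>} the operator $\gamma(\ov\lambda)^{[*]}\in\cB(\sH,\dC^p)$ extends continuously to $\gamma(\ov\lambda)^{\langle*\rangle}\in\cB(\sH_-,\dC^p)$, and $R_\lambda^0$ extends to $\wt R_\lambda^0\in\cB(\sH_-,\sH)$. Since $\sH$ is dense in $\sH_-$, the Kre\u{\i}n identity~\eqref{gres0} extends by continuity to
\[
\wt{\mathbf R}_\lambda = \wt R_\lambda^0 - \gamma(\lambda)\bigl(M(\lambda)+\tau(\lambda)\bigr)^{-1}\gamma(\ov\lambda)^{\langle*\rangle}\in\cB(\sH_-,\sH),\quad \lambda\in\dC_+\cup\dC_-.
\]
Subtracting the regularizer $\wt\bR$ on both sides and using~\eqref{eq:RegExtRes} produces the same identity for $\wh{\mathbf R}_\lambda$ and $\wh R_\lambda^0$. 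Multiplying on the left by $L^{\langle*\rangle}$ and on the right by $L$, and reading off the block entries from~\eqref{eq:Lres2A}, gives~\eqref{eq:11.Lres1}.

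For the bijection in (i), Theorem~\ref{krein} already sets up a bijection $\tau\leftrightarrow{\mathbf R}_\lambda$ between $\wt\cN_{\wt\kappa-\kappa}^{\ptp}$ and generalized resolvents of index $\wt\kappa$. Composing with ${\mathbf R}_\lambda\mapsto r(\lambda)=L^{\langle*\rangle}\wh{\mathbf R}_\lambda L$ delivers an $\sL$-resolvent, and Lemma~\ref{lem:9.6}(ii) says that $\sL$-regularity of $r$ is equivalent to $r\in\cN_{\wt\kappa}^{\ptp}$; this pins down the image on the $\sL$-resolvent side. Injectivity of $\tau\mapsto r$ comes from Lemma~\ref{lem:PreresM}(iii): at any $\lambda\in\rho(A,\sL)$ the blocks $\mathfrak a_{12}(\lambda)$ and $\mathfrak a_{21}(\lambda)$ are invertible, so~\eqref{eq:11.Lres1} can be solved as
\[
\bigl(\tau(\lambda)+M(\lambda)\bigr)^{-1}=\mathfrak a_{21}(\lambda)^{-1}\bigl(\mathfrak a_{22}(\lambda)-r(\lambda)\bigr)\mathfrak a_{12}(\lambda)^{-1},
\]
determining $\tau(\lambda)$ uniquely on a nonempty open set, and hence as a family by analytic continuation via Corollary~\ref{cor:krein}.

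For (ii), I would view~\eqref{eq:11.Lres1} as the Schur complement with respect to the upper-left block of the $2p\times 2p$ block function $\mathcal M(\lambda):=\mathfrak A_{\Pi\sL}(\lambda)+\mathrm{diag}(\tau(\lambda),0)$. The Kre\u{\i}n--Langer indefinite factorisation of $\mathcal M$ and the subadditivity of negative squares under addition of kernels then yield
\[
\kappa_-(\mathsf K^r) \le \kappa_-(\mathsf K^{\mathfrak A_{\Pi\sL}})+\kappa_-(\mathsf K^{\tau}) = \kappa_1 + (\wt\kappa-\kappa),
\]
where $\kappa_1=\kappa_-(\sH_{\mathfrak A})\le\kappa$ by Lemma~\ref{lem:PreresM}(i). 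Since $r$ is $\sL$-regular of index $\wt\kappa$ we have $\kappa_-(\mathsf K^r)=\wt\kappa$, forcing $\kappa_1\ge\kappa$ and hence $\kappa_1=\kappa$, which is condition~\eqref{eq:A_N_k}. The main obstacle is executing this negative-squares arithmetic rigorously: $\tau$ is in general an $\cN_{\wt\kappa-\kappa}^{\ptp}$-family rather than a function, so the Schur-complement identity must be recast using the Nevanlinna pair $\bigl[C(\lambda)\ D(\lambda)\bigr]$ of Lemma~\ref{lem:Nkpairs_fam} and Corollary~\ref{cor:krein}, and one must handle the possible exceptional points of $\rho(A_0)\cap\rho(A,\sL)$ where $\mathfrak a_{12}(\lambda)$ degenerates before invoking the factorisation.
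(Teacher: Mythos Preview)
Your treatment of part~(i) coincides with the paper's: extend the Kre\u{\i}n formula~\eqref{gres0} to $\cB(\sH_-,\sH)$, subtract the regularizer, and compress with $L^{\langle*\rangle}$, $L$; then use Theorem~\ref{krein} together with Lemma~\ref{lem:9.6}(ii). Your explicit injectivity step via the invertibility of $\mathfrak a_{12}$, $\mathfrak a_{21}$ is a detail the paper leaves implicit.

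For part~(ii) you take a genuinely different route. The paper argues geometrically: from the extended formula~\eqref{eq:11.Lres2} one has the inclusion
\[
P_\sH\sH'=\overline{\mathrm{span}}\{\wt{\mathbf R}_\lambda\sL:\lambda\in\rho(\wt A)\}\subset \sH_{\mathfrak A},
\]
and since $\sL$-regularity forces $\mathrm{ind}_-(P_\sH\sH')=\kappa$, one gets $\kappa_-(\sH_{\mathfrak A})=\kappa$ directly. This avoids any kernel arithmetic. Your approach instead bounds $\kappa_-(\mathsf N^r)$ from above via a Schur-complement argument applied to $\mathfrak A_{\Pi\sL}+\mathrm{diag}(\tau,0)$. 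That can be made to work, but it needs two ingredients you do not supply: (a) that compression to a Schur complement does not increase negative squares (this is essentially the content of the later Theorem~\ref{thm:Gres_F1}(iv), or equivalently a reproducing-kernel argument), and (b) subadditivity of negative squares for the sum $\mathfrak A_{\Pi\sL}+\mathrm{diag}(\tau,0)$ when $\tau$ is merely an $\cN_{\wt\kappa-\kappa}$-family. You flag (b) yourself as the ``main obstacle''. So your argument for (ii) is incomplete as written, whereas the paper's subspace-inclusion argument sidesteps both issues entirely and is noticeably shorter.
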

\begin{proof}
  If ${L}^{\langle*\rangle}\wh{\mathbf R}_{\lambda}{L}$ is
  a  ${\sL}$-regular ${\sL}$-resolvents of $A$ of index $\wt\kappa$
  then the space $\sH'$ in~\eqref{eq:9.10K} has negative index $\wt\kappa$,
  and hence the space $\wt\sH$ in~\eqref{eq:MinGres} has negative index $\wt\kappa$.
  Therefore, the generalized resolvent $ {\mathbf R}_\omega$ has index $\wt \kappa$
  and, by Theorem~\ref{krein}, the extended generalized resolvent $ \wt{\mathbf R}_\lambda$ admits the representation
\begin{equation}\label{eq:11.Lres2}
\wt{\mathbf R}_{{\lambda}}=\wt R_{\lambda}^0-\gamma({\lambda})(M({\lambda})
     +\tau({\lambda}))^{-1}\gamma(\ov{{\lambda}})^{\langle*\rangle},\quad
     {\lambda}\in\dC_+\cup\dC_-
\end{equation}
for some $\tau\in\wt\cN_{\wt\kappa-\kappa}^{\ptp}$.
Subtracting the regularizer $\bR$ and  applying from both sides the operators
$ {L}^{\langle*\rangle}$ and ${L}$ we obtain \eqref{eq:11.Lres1}.
Since ${L}^{\langle*\rangle}\wh{\mathbf R}_{\lambda}{L}$ is
  an  ${\sL}$-regular ${\sL}$-resolvents of $A$ it belongs to the class
  $\cN_{\wt\kappa}^{p\times p}$, see Lemma~\ref{lem:9.6}.

Conversely, assume that $\tau\in\wt\cN_{\wt\kappa-\kappa}^{\ptp}$ and the mvf
 ${L}^{\langle*\rangle}\wh{\mathbf R}_{\lambda}{L}$ belongs to $\cN_{\wt\kappa}^{p\times p}$.
 Then, by Lemma~\ref{lem:9.6}, the ${\sL}$-resolvent ${L}^{\langle*\rangle}\wh{\mathbf R}_{\lambda}{L}$
 of $A$ is ${\sL}$-regular   of index $\wt\kappa$.
\medskip

 (ii) Let ${L}^{\langle*\rangle}\wh{\mathbf R}_{\lambda}{L}$ be a ${\sL}$-regular ${\sL}$-resolvent of $A$. Then
for the set $\sH'$ defined by \eqref{eq:9.10K} we have $\textup{ind}_-(\sH')=\wt\kappa$,
and hence $\sH'$  is a non-degenerate subspace of $\wt\sH$.
For the subspace
\[
P_\sH  \sH':=\overline{\textup{span}}\left\{\wt{\mathbf R}_{\lambda} {\sL}:\,\lambda\in\rho(\wt A)\right\}
\]
we get $\textup{ind}_-(P_\sH  \sH')=\kappa$. It follows from~\eqref{eq:11.Lres2} that
$P_\sH  \sH'\subset \sH_\mathfrak A$. Hence $\textup{ind}_-(\sH_\mathfrak A)=\kappa$.
\end{proof}
\subsection{Right ${\sL}$-resolvent matrix}
\label{sec:11.5}
For a  $2\times 2$ block matrix-function $W(\lambda)=\left[w_{i,j}\right]_{i,j=1}^2$ with blocks $w_{i,j}(\lambda)$ of size
${p\times p}$
we define a transformation $T_W$
in the set $\wt\cC(\dC^p)$ of linear relations $\tau$ in $\dC^p$ via
\begin{equation}
\label{eq:App2:LFT}
 T_W[\tau]
  =\Big\{\,\begin{pmatrix}w_{22}h+w_{21}h' \\w_{12}h+w_{11}h'\end{pmatrix}:\,
  \begin{pmatrix} h \\ h'\end{pmatrix}\in \tau\,\Big\},
\end{equation}
see Yu. Shmul'yan~\cite{Sh80}.
Clearly, $T_W[\tau]$ is contained in the linear relation
\begin{equation}\label{eq:2.2.0}
  (w_{11}\tau+w_{12})
  (w_{21}\tau+w_{22})^{-1},
\end{equation}
however, the converse inclusion may fail to hold, see an example in~\cite{DM95}.

  In this section, for any boundary triple  $\Pi=  \{\cH,\Gamma_0,\Gamma_1\}  $ 
and a gauge  $\sL\subset\sH$, we associate a $\Pi\sL$-resolvent matrix $W(\lambda)$ of  a symmetric operator $A$ and present a formula which describes $\sL$-resolvents
of $A$ with the help of the linear-fractional transformation $T_W$.
%
%

\begin{definition}\label{def:11.Pl-res,matr2}
Let $\Pi=  (\dC^p,\Gamma_0,\Gamma_1) $ be a boundary triple for
$A^{*}$ and let ${\sL}\subset\sH_-$ be a gauge for $A$ such that
$\rho(A,{\sL})\cap\rho({A}_0)\ne\emptyset$,  let
${\mathfrak A}_{\Pi{\sL}}({\lambda})=({\mathfrak a}_{ij}({\lambda}))_{i,j=1}^2$
be  the $\Pi{\sL}$-preresolvent matrix of $A$, see~\eqref{eq:Lres2A}. The $\dC^{2p\times 2p}$--valued
function $W_{\Pi{\sL}}(\lambda)$ defined on $\rho(A,{\sL})\cap\rho({A}_0)$ by
\begin{equation}\label{eq:LresM}
    W_{\Pi\sL}({z})=\begin{bmatrix}
    {\mathfrak a}_{22}({z}){\mathfrak a}_{12}({z})^{-1} &     {\mathfrak a}_{22}({z}){\mathfrak a}_{12}({z})^{-1}    {\mathfrak a}_{11}({z})-{\mathfrak a}_{21}({z})\\
        {\mathfrak a}_{12}({z})^{-1}                 &   {\mathfrak a}_{12}({z})^{-1}    {\mathfrak a}_{11}({z})
    \end{bmatrix}, 
\end{equation}
is called the {\bf ${\sL}$-resolvent matrix of $A$} corresponding to the boundary triple $\Pi$ or, briefly, the $\Pi{\sL}$-resolvent matrix of $A$.
  \end{definition}

For an $\cN_{\kappa}^{\ptp}$-family $\tau(\lambda)=\ran\begin{bmatrix}
    \varphi(\lambda)\\\psi(\lambda)
    \end{bmatrix}$ (see Definition~\ref{def:Nk-family})
    let us set
\begin{equation}\label{eq:Lambda}
    \Lambda_{\varphi,\psi}=\{\lambda\in \rho(A,{\sL})\cap\rho(A_0):\,
{\det}\left(w_{21}({z})\psi({z})+w_{22}({z})\varphi({z})\right)\ne0\}.
\end{equation}
If $\Lambda_{\varphi,\psi}\ne\emptyset$, then the linear fractional transform in~\eqref{eq:App2:LFT} for
${\lambda}\in\Lambda_{\varphi,\psi}$ takes the form
\begin{equation}\label{eq:Lres_phipsi}
 T_{W}[\tau({\lambda})]=
(w_{11}({\lambda})\psi({\lambda})+w_{12}({\lambda})\varphi({\lambda}))
(w_{21}({\lambda})\psi({\lambda})+w_{22}({\lambda})\varphi({\lambda}))^{-1}.
\end{equation}
Otherwise, it is understood in the sense of Shmul'yan, see \eqref{eq:App2:LFT}.

In the following theorem we show that the $\Pi{\sL}$-preresolvent matrix
$W=W_{\Pi\sL}$ belongs to the class $\cW_{\kappa_1}(J_p)$ (see Definition~\ref{def:11.cP}), where $J_p$ is given by~\eqref{eq:J_p}.
\begin{definition}\label{def:11.cP}
A matrix-valued  function $W(z)$ holomorphic on a domain $\Omega\subset\dC_+$ 
is said to belong
to the class $\cW_{\kappa_1}(J_p)$
if the kernel
\begin{equation}\label{kerK}
{\mathsf K}_\omega(\lambda):=
\frac{ J_p-W(\lambda)J_p W(\omega)^*}{-i(\lambda-\ov\omega)}, \quad\lambda,\omega\in \rho(A,{\sL}),\quad \lambda\ne\overline{\omega}
\end{equation}
has $\kappa_1$ negative squares on $\Omega$.
\end{definition}

%

\begin{theorem}\label{thm:Gres_F1}
Let the assumptions of Lemma~\ref{lem:PreresM}  hold and let the mvf's
$\sA(\lambda):={\mathfrak A}_{\Pi{\sL}}({\lambda})
=({\mathfrak a}_{ij}({\lambda}))_{i,j=1}^2$ and $W(\lambda):=W_{\Pi\sL}(\lambda)$
be given  by~\eqref{eq:Lres2A} and~\eqref{eq:LresM}. Then
\begin{enumerate}\def\labelenumi{\rm (\roman{enumi})}
\item $W(\lambda)$ and $\sA(\lambda)$ are related by
\begin{equation}\label{eq:W_A}
  W(\lambda)=\begin{bmatrix}
   0 & {\mathfrak a}_{12}({\lambda})\\
  -I_p & {\mathfrak a}_{22}({\lambda})
    \end{bmatrix}^{-1}
    \begin{bmatrix}
   I_p & {\mathfrak a}_{11}({\lambda})\\
  0  & {\mathfrak a}_{21}({\lambda})
    \end{bmatrix}.
\end{equation}
 \item 
 The kernel ${\mathsf K}_\omega(\lambda)$
 is related to the kernel ${\mathsf{N}}^{\sA}_\omega({\lambda})$ by
\begin{equation}\label{eq:KW_NA}
{\mathsf K}_\omega(\lambda)=\begin{bmatrix}
   0 & {\mathfrak a}_{12}({\lambda})\\
  -I_p & {\mathfrak a}_{22}({\lambda})
    \end{bmatrix}^{-1}
    {\mathsf{N}}^{\sA}_\omega({\lambda})
    \begin{bmatrix}
   0 & {\mathfrak a}_{12}(\omega)\\
  -I_p & {\mathfrak a}_{22}(\omega)
    \end{bmatrix}^{-*}.
\end{equation}
\item
$W\in \cW_{\kappa_1}(J_p)$ for some $\kappa_1\le\kappa$ if and only if ${\mathfrak A}_{\Pi{\sL}}\in \cN_{\kappa_1}^{p\times p}$
\item
For every $\tau\in\wt\cN_{\kappa_2}^{p\times p}$ of the form
$\tau(\lambda)=\ran\begin{bmatrix}
    \varphi(\lambda)\\\psi(\lambda)
    \end{bmatrix}$
the linear-fractional transformation $\wt\tau=T_{W}[\tau]$
in~\eqref{eq:Lres_phipsi}
belongs to the class $\wt\cN_{\kappa'}^{p\times p}$
with $\kappa'\le\kappa_1+\kappa_2$.
\end{enumerate}
\end{theorem}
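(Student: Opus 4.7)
The plan is to reduce parts~(i)--(iii) to block-matrix algebra based on the factorization $W=M_1^{-1}M_2$ with
$M_1(\lambda)=\begin{bmatrix}0 & {\mathfrak a}_{12}\\ -I_p & {\mathfrak a}_{22}\end{bmatrix}$
and
$M_2(\lambda)=\begin{bmatrix}I_p & {\mathfrak a}_{11}\\ 0 & {\mathfrak a}_{21}\end{bmatrix}$,
and to treat part~(iv) by writing the kernel of the transformed family as a sum of the kernels of $\tau$ and of $W$.

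For~(i), compute
$M_1^{-1}=\begin{bmatrix}{\mathfrak a}_{22}{\mathfrak a}_{12}^{-1} & -I_p\\ {\mathfrak a}_{12}^{-1} & 0\end{bmatrix}$
and verify directly that the blocks of $M_1^{-1}M_2$ match~\eqref{eq:LresM}. For~(ii), a blockwise expansion yields the identity
\[
M_1(\lambda)J_pM_1(\omega)^*-M_2(\lambda)J_pM_2(\omega)^*=-i\bigl(\sA(\lambda)-\sA(\omega)^*\bigr);
\]
left-multiplying by $M_1(\lambda)^{-1}$, right-multiplying by $M_1(\omega)^{-*}$, dividing by $-i(\lambda-\bar\omega)$, and substituting $W=M_1^{-1}M_2$ gives~\eqref{eq:KW_NA}. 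Part~(iii) is then immediate: on $\rho(A_0)\cap\rho(A,{\sL})$ the congruence $X\mapsto M_1(\lambda)^{-1}XM_1(\omega)^{-*}$ is invertible and holomorphic, so ${\mathsf K}_\omega(\lambda)$ and ${\mathsf N}^{\sA}_\omega(\lambda)$ have the same number of negative squares, which together with Lemma~\ref{lem:PreresM}(i) gives the claimed equivalence.

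For~(iv), write $T_W[\tau(\lambda)]=\ran\wt\Phi(\lambda)$ with $\wt\Phi=SWS\cdot\Phi$, where $S=\begin{bmatrix}0 & I_p\\ I_p & 0\end{bmatrix}$ and $\Phi=\begin{bmatrix}\varphi\\ \psi\end{bmatrix}$. Using $SJ_pS=-J_p$ and substituting $W(\omega)^*J_pW(\lambda)=J_p+i(\lambda-\bar\omega){\mathsf K}^R_\omega(\lambda)$, where ${\mathsf K}^R_\omega(\lambda):=[J_p-W(\omega)^*J_pW(\lambda)]/[-i(\lambda-\bar\omega)]$ is the right-sided $J_p$-kernel of $W$, one arrives at the key identity
\[
{\mathsf N}^{\wt\varphi\wt\psi}_\omega(\lambda)={\mathsf N}^{\varphi\psi}_\omega(\lambda)+Y(\omega)^*{\mathsf K}^R_\omega(\lambda)Y(\lambda),\quad Y:=S\Phi.
\]
Since $Y(\lambda)\colon\dC^p\to\dC^{2p}$ is a pointwise linear map, the pull-back $Y^*{\mathsf K}^RY$ has at most $\mbox{sq}_-{\mathsf K}^R$ negative squares, and additivity of negative squares for sums of Hermitian kernels gives $\mbox{sq}_-{\mathsf N}^{\wt\varphi\wt\psi}\le\mbox{sq}_-{\mathsf K}^R+\kappa_2$. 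The remaining conditions~(ii)--(iii) of Definition~\ref{def:Nk-family} for $\wt\tau$ follow from the $J_p$-unitarity $W^\#(\lambda)J_pW(\lambda)=J_p$ (obtained by setting $\omega=\bar\lambda$ in the identity from part~(ii), which gives $\sA^\#=\sA$ and hence $W^\#J_pW=J_p$) and from invertibility of $V=SWS$ on $\rho(A_0)\cap\rho(A,{\sL})$, respectively.

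The main obstacle is establishing $\mbox{sq}_-{\mathsf K}^R=\mbox{sq}_-{\mathsf K}^L=\kappa_1$; once this is known, one obtains $\kappa'\le\kappa_1+\kappa_2$. I expect this equality to follow from the $J_p$-unitarity $W^\#J_pW=J_p$: using $W(\omega)^*=J_pW(\bar\omega)^{-1}J_p$ one shows that the Gram matrices built from~${\mathsf K}^R$ at points $\{\omega_j\}$ coincide, up to the substitution $\omega_j\mapsto\bar\omega_j$ and a $J_p$-congruence, with the Gram matrices of ${\mathsf K}^L$ evaluated for $W^\#\in\cW_{\kappa_1}(J_p)$, whence the two negative-square counts agree.
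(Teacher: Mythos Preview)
Your proof is correct and follows essentially the same route as the paper's. For (i)--(iii) the paper simply says ``checked by straightforward calculations'' and then invokes the congruence~\eqref{eq:KW_NA}; your identity $M_1J_pM_1^*-M_2J_pM_2^*=-i(\sA-\sA^*)$ is precisely that calculation made explicit.

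For (iv) the paper also arrives at the splitting
\[
{\sf N}^{\wt\phi\wt\psi}_\omega(\lambda)=F(\omega)^*{\sf K}^R_\omega(\lambda)F(\lambda)+{\sf N}^{\phi\psi}_\omega(\lambda),
\]
using $F=\begin{pmatrix}\psi\\\phi\end{pmatrix}$ and $\wt F=WF$ instead of your $\Phi=\begin{pmatrix}\varphi\\\psi\end{pmatrix}$ with conjugation by $S$; the two are equivalent since $SJ_pS=-J_p$. You are in fact more careful than the paper here: the paper's definition~\eqref{kerK} is the \emph{left} kernel, while the decomposition naturally produces the \emph{right} kernel~${\sf K}^R$, and the paper tacitly identifies the two. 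Your sketch for $\mbox{sq}_-{\sf K}^R=\mbox{sq}_-{\sf K}$ via $W^\#J_pW=J_p$ is correct: writing $W(\omega)^*=J_pW(\bar\omega)^{-1}J_p$ one finds $W(\omega)^{-*}{\sf K}^R_\omega(\lambda)W(\lambda)^{-1}=J_p{\sf K}_{\bar\lambda}(\bar\omega)J_p$, so the Gram matrices of ${\sf K}^R$ at $\{\lambda_j,u_j\}$ coincide with the complex conjugates of those of ${\sf K}$ at $\{\bar\lambda_j,J_pW(\lambda_j)u_j\}$, giving equal negative-square counts on $\rho_s(A,{\sL})\cap\rho(A_0)$.
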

\begin{proof}
  The items (i) and (ii) are checked by straightforward calculations.

  (iii) The formula \eqref{eq:KW_NA} ensures that the kernels
  ${\mathsf K}_\omega(\lambda)$ and ${\mathsf{N}}^{\sA}_\omega({\lambda})$
  have the same numbers of negative squares on $\rho(A,\sL)\cap \rho(A_0)$.

  (iv) Let  us set
\begin{equation}
\label{Npair3}
\wt F({\lambda}):=\begin{pmatrix} \wt\psi({\lambda}) \\ \wt\phi({\lambda})\end{pmatrix}
=W({\lambda}) F({\lambda}),\quad
\text{where}\quad
F({\lambda}):=\begin{pmatrix}\psi({\lambda}) \\ \phi({\lambda})\end{pmatrix}.
\end{equation}
Then $\wt\tau=T_{W}[\tau]=\ran\begin{pmatrix} \wt\phi({\lambda}) \\ \wt\psi({\lambda})\end{pmatrix}$ and
it follows from~\eqref{eq:App2:LFT} that the kernel
${\sf N}^{\wt\phi \wt\psi}_\omega ({\lambda})$, see Definition~\ref{def:Nk-family}, admits the representation
\begin{equation}
\label{Nkern3}
\begin{split}
{\sf N}^{\wt\phi \wt\psi}_\omega ({\lambda})
&=-i\frac{\wt F(\omega)^*J_{\sL}\wt F({\lambda})}{{\lambda}-\bar\omega}
=-i\frac{F(\omega)^*W(\omega)^*J_{\cH}W({\lambda})F({\lambda})}{{\lambda}-\bar\omega}\\
&=F(\omega)^*{\sf K}_{W} ({\lambda},\omega)F({\lambda})
+F(\omega)^*{\sf N}_{\phi \psi} ({\lambda},\omega)F({\lambda}).
\end{split}
\end{equation}
Therefore the kernel ${\sf N}_{\wt\phi \wt\psi} ({\lambda},\omega)$ has $\kappa'$ negative squares with $\kappa'\le\kappa_1+\kappa_2$.
Now the properties (2)--(3) in Definition~\ref{def:Nk-family} for $\lambda\in\rho_s(A,\sL)$ are implied by the identities \eqref{Nkern3} and \eqref{Npair3} and \eqref{eq:11.W*W}.
Therefore, $\wt\tau\in \wt\cN_{\kappa'}^{p\times p}$.
\end{proof}

 In the following theorem we provide an explicit formula for the    $\Pi{\sL}$-resolvent matrix $W_{\Pi{\sL}}({\lambda})$ of $A$
in the sense of Definition \ref{def:11.Pl-res,matr2} that expresses it via
the boundary mappings $\Gamma_j$ and the family $\cG({\cdot})$  given by  
\begin{equation}\label{eq:11.cG}
    {\cG}({\lambda})=\begin{bmatrix}
   -\cQ({\lambda})\\
      \cP({\lambda})
    \end{bmatrix},\quad
    \wh{\cG}({\lambda})=\begin{bmatrix}
   -\wh\cQ({\lambda})\\
      \wh\cP({\lambda})
    \end{bmatrix}.
\end{equation}
\begin{theorem}\label{thm:ResM}
Let $(\dC^p,\wh\Gamma_0,\wh\Gamma_1)$ be an extended boundary triple for $A^{\langle*\rangle}$, let ${\sL}$ be a subspace of $\sH_-$
such that $\rho(A,{\sL})\ne\emptyset$ and let the operator-valued function   $\wh W({\cdot})$ be defined by
\begin{equation}\label{eq:Formula_W}
    \wh W({\lambda}):=\left(\wh\Gamma(\wh{\cG}(\lambda)^{\langle*\rangle})\right)^*
 =\begin{bmatrix}
   -\wh\Gamma_0\wh\cQ(\lambda)^{\langle*\rangle}  &  \wh \Gamma_0\wh\cP(\lambda)^{\langle*\rangle}\\
   -\wh\Gamma_1\wh\cQ(\lambda)^{\langle*\rangle} &   \wh\Gamma_1\wh\cP(\lambda)^{\langle*\rangle}
       \end{bmatrix}^*,\quad
 {\lambda}\in\rho(A,{\sL}).
\end{equation}
Then:
\begin{enumerate}\def\labelenumi{\rm (\roman{enumi})}
  \item 
 The kernel
\begin{equation}\label{kerK1}
{\mathsf K}_\omega(\lambda):=
\frac{ J_p-W(\lambda)J_p W(\omega)^*}{-i(\lambda-\ov\omega)}, \quad\lambda,\omega\in \rho(A,{\sL}),\quad \lambda\ne\overline{\omega}
\end{equation}
admits the factorization
\begin{equation}\label{eq:Christ_Id}
    {\mathsf K}_\omega(\lambda)={\cG}({\lambda}){\cG}(\omega)^{\langle*\rangle}\qquad
 {\lambda},\omega\in\rho(A,{\sL}).
\end{equation}
  \item 
If the condition \eqref{eq:A_N_k} holds, then $W\in \cW_{\kappa}(J_p)$.
negative squares.
\medskip
  \item 
  If, in addition, $\rho_s(A,{\sL}):=\rho(A,{\sL})\cap\overline{\rho(A,{\sL})}\ne
\emptyset$, then $\wh W({\lambda})$ is invertible for $\lambda\in \rho_s(A,{\sL})$ and
coincides  on $\rho_s(A,{\sL})\cap\rho(A_0)$ with the $\Pi{\sL}$-resolvent matrix
 $W_{\Pi{\sL}}({\lambda})$ of $A$.
 \end{enumerate}
\end{theorem}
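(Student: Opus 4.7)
My plan is to handle the three parts in the order (iii), then (i) and (ii), because the computation for (iii) gives a very concrete handle on $\wh W(\lambda)$ via the preresolvent entries and lets one check directly that things match the earlier definitions, while (i) is best derived by a single application of the extended Green identity of Lemma~\ref{lem:ExtGreenForm} to the pair $(\wh\cG(\lambda)^{\langle*\rangle}\xi,\wh\cG(\omega)^{\langle*\rangle}\eta)$.

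For part~(iii), I first compute $\wh\Gamma_j\wh\cP(\lambda)^{\langle*\rangle}$. Since $\cP(\lambda)^{\langle*\rangle}\dC^p=\sN_{\overline\lambda}$ with $L^{\langle*\rangle}\cP(\lambda)^{\langle*\rangle}=I_p$ (Lemma~\ref{lem:PQ_prop}(iii)), and since $\mathfrak{a}_{21}(\overline\lambda)=L^{\langle*\rangle}\gamma(\overline\lambda)$ is invertible for $\lambda\in\rho(A,\sL)$ (Lemma~\ref{lem:PreresM}(iii)), I obtain $\cP(\lambda)^{\langle*\rangle}=\gamma(\overline\lambda)\mathfrak{a}_{21}(\overline\lambda)^{-1}$, hence $\wh\cP(\lambda)^{\langle*\rangle}=\wh\gamma(\overline\lambda)\mathfrak{a}_{21}(\overline\lambda)^{-1}$. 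Using $\wh\Gamma_j\wh\gamma=\Gamma_j\wh\gamma$ from Lemma~\ref{lem:bf_A0Pi}, this gives the second block-column of $\wh W(\lambda)^*$. For the first block-column I decompose $\wh\cQ(\lambda)^{\langle*\rangle}u=\wh g_{0}+\wh g_{\overline\lambda}$ according to the direct sum $A^{\langle*\rangle}=\bA_0\dotplus\wh\sN_{\overline\lambda}$ from Lemma~\ref{lem:bf_A0}(iii); the $\bA_0$-part is identified through Lemma~\ref{lem:bf_A0}(ii) with $\ff=Lu$, while the $\wh\sN_{\overline\lambda}$-part is $\gamma(\overline\lambda)\xi$ with $\xi$ determined by applying $L^{\langle*\rangle}$ and using the two identities $L^{\langle*\rangle}\cQ(\lambda)^{\langle*\rangle}=L^{\langle*\rangle}\wt\bR L$ (a consequence of $L^{\langle*\rangle}\wt\cQ(\lambda)^{\langle*\rangle}=0$ in~\eqref{eq:PQ_pr2}) and $L^{\langle*\rangle}\wt R_{\overline\lambda}^{0}L=\mathfrak{a}_{22}(\overline\lambda)+L^{\langle*\rangle}\wt\bR L$. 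A short algebraic manipulation gives $\xi=-\mathfrak{a}_{21}(\overline\lambda)^{-1}\mathfrak{a}_{22}(\overline\lambda)u$; applying $\wh\Gamma_0,\wh\Gamma_1$ and taking the Hermitian conjugate (using $\mathfrak{a}_{ij}(\overline\lambda)^{*}=\mathfrak{a}_{ji}(\lambda)$) reproduces exactly the formula~\eqref{eq:LresM} for $W_{\Pi\sL}(\lambda)$. The invertibility of $\wh W(\lambda)$ on $\rho_s(A,\sL)$ then follows by combining the three-fold decomposition $A^{\langle*\rangle}=\bA\dotplus\wh\cP(\lambda)^{\langle*\rangle}\dC^p\dotplus\wh\cQ(\lambda)^{\langle*\rangle}\dC^p$ of~\eqref{eq:A+_decom} with $\ker\wh\Gamma=\bA$ from~\eqref{ker_wh_Gamma}: the restriction of $\wh\Gamma$ to the $2p$-dimensional complement of $\bA$ is injective, so the matrix $\wh W(\lambda)^{*}=\wh\Gamma\wh\cG(\lambda)^{\langle*\rangle}$ is invertible.

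For part~(i), I apply Lemma~\ref{lem:ExtGreenForm} to $\wh f=\wh\cG(\lambda)^{\langle*\rangle}\xi$ and $\wh g=\wh\cG(\omega)^{\langle*\rangle}\eta$ with $u=\xi_1$, $v=\eta_1$; this is the only admissible choice, since the improper part of $\ff'_\lambda$ is precisely $-L\xi_1$, and the $\sH$-condition of Lemma~\ref{lem:ExtGreenForm} forces $u=\xi_1$. The left-hand side of the Green identity then collapses to $(\overline\lambda-\omega)[f_\lambda,g_\omega]_\sH$, and block expansion together with Lemma~\ref{lem:PQ_prop}(iv) identifying $\cQ^{\langle*\rangle}=\cQ^{[*]}$ yields $[f_\lambda,g_\omega]_\sH=\eta^{*}\cG(\omega)\cG(\lambda)^{\langle*\rangle}\xi$. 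On the right-hand side, the boundary-triple term becomes $\eta^{*}\wh W(\omega)(iJ_p)\wh W(\lambda)^{*}\xi$ thanks to $\wh\Gamma\wh\cG(\lambda)^{\langle*\rangle}=\wh W(\lambda)^{*}$, and the duality correction reduces to a constant bilinear form in $\xi,\eta$ by the key cancellations $L^{\langle*\rangle}\cP(\lambda)^{\langle*\rangle}=I_p$ and $L^{\langle*\rangle}\wt\cQ(\lambda)^{\langle*\rangle}=0$. Rearranging and dividing by $-i(\lambda-\overline\omega)$ produces the factorization~\eqref{eq:Christ_Id}.

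For part~(ii), the factorization just obtained identifies ${\mathsf K}_\omega(\lambda)$ with the Gram kernel of the family of vectors parametrized by $\wh\cG(\omega)^{\langle*\rangle}\dC^{2p}\subseteq A^{\langle*\rangle}$. The range of this family, after evaluation of the inner product, is contained in the space $\sH_{\mathfrak A}$ appearing in~\eqref{eq:A_N_k}, so under the standing hypothesis $\kappa_{-}(\sH_{\mathfrak A})=\kappa$ of~\eqref{eq:A_N_k} the kernel has at most $\kappa$ negative squares; combined with Lemma~\ref{lem:PreresM}(ii), which already gives $\mathfrak A_{\Pi\sL}\in\cN_\kappa^{p\times p}$, and the block-triangular relation~\eqref{eq:W_A} in Theorem~\ref{thm:Gres_F1}, one obtains $W\in\cW_\kappa(J_p)$. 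The main obstacle is the careful bookkeeping in part~(i): the rigging duality $\langle\,\cdot\,,\,\cdot\,\rangle_{-,+}$ and the Pontryagin inner product $[\,\cdot\,,\,\cdot\,]_\sH$ interact nontrivially because $\cQ(\lambda)^{\langle*\rangle}$ takes values in $\sH$ whereas $\cP(\lambda)^{\langle*\rangle}$ takes values in $\sH_+$, and the second component of $\wh\cQ(\lambda)^{\langle*\rangle}$ carries the extra term $L$ that does not lie in $\sH$.
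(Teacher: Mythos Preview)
Your strategy is sound and largely parallel to the paper's, but the organization differs in two notable ways.

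\textbf{Order and part (iii).} You do (iii) first and obtain invertibility of $\wh W(\lambda)$ from the decomposition $A^{\langle*\rangle}=\bA\dotplus\wh\cP(\lambda)^{\langle*\rangle}\dC^p\dotplus\wh\cQ(\lambda)^{\langle*\rangle}\dC^p$ together with $\ker\wh\Gamma=\bA$. The paper instead proves (i) first, extracts the $J_p$-symmetry $\wh W(\lambda)J_p\wh W(\ov\lambda)^*=J_p$ from \eqref{eq:Christ_Id} at $\omega=\ov\lambda$, and reads off invertibility from that. Your argument is cleaner and independent of (i). For the identification $\wh W=W_{\Pi\sL}$ the two computations are dual: you decompose $\wh\cQ(\lambda)^{\langle*\rangle}u$ along $\bA_0\dotplus\wh\sN_{\ov\lambda}$ and apply $\wh\Gamma$, while the paper expresses the entries $\mathfrak a_{ij}$ in terms of $\wh w_{ij}$ (via \eqref{eq:11.a11}--\eqref{eq:11.a22}) and then inverts using the relations \eqref{LR_39}--\eqref{LR_38}. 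Both land on \eqref{eq:LresM}.

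\textbf{Part (i): one shot versus block-by-block.} Here the approaches genuinely diverge. The paper verifies the four $p\times p$ blocks of \eqref{eq:Christ_Id} separately: for the $(2,2)$, $(1,2)$, $(2,1)$ blocks it uses only the simpler identity \eqref{eq:ExtGreen} (where one argument lies in $A^{[*]}$), and the $J_p$-term in the kernel is produced by the $\langle f,Lv_1\rangle$ contribution on the \emph{left} side. Lemma~\ref{lem:ExtGreenForm} is invoked only for the $(1,1)$ block, where the duality correction on the right side vanishes identically by \eqref{eq:PQ_pr2}. Your unified application of Lemma~\ref{lem:ExtGreenForm} with general $\xi,\eta\in\dC^{2p}$ must instead produce the entire $J_p$-term from the correction $\langle f+\wt\bR L\xi_1,L\eta_1\rangle_{+,-}-\langle L\xi_1,g+\wt\bR L\eta_1\rangle_{-,+}$. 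You correctly observe that this reduces to a constant bilinear form via $L^{\langle*\rangle}\cP^{\langle*\rangle}=I_p$ and $L^{\langle*\rangle}(\cQ^{\langle*\rangle}-\wt\bR L)=0$; but matching that constant to exactly $-i\,\eta^*J_p\xi$ (with the correct sign) is the whole content of the identity for the off-diagonal blocks, and is sensitive to the conventions in \eqref{eq+-duality} and \eqref{eq:<*>2}. The paper's block-by-block route sidesteps this delicate bookkeeping. Your approach is valid, but you should expect the sign verification there to be the most error-prone step.

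Part (ii) is handled the same way in both.
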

\begin{proof}
(i) Let us  decompose the kernel ${\mathsf K}_\omega(\lambda)$ into four $p\times p$-blocks: ${\mathsf K}_\omega(\lambda)=\left[{\mathsf K}^{ij}_\omega(\lambda)\right]_{i,j=1}^2$.
Then the identity~\eqref{eq:Christ_Id} is splitted into four identities
\begin{equation}\label{eq:KrSaakId1}
  {\mathsf K}^{11}_\omega(\lambda)=\cQ({\lambda})\cQ(\omega)^{\langle*\rangle},\quad
  {\mathsf K}^{12}_\omega(\lambda)=-\cQ({\lambda})\cP(\omega)^{\langle*\rangle},
\end{equation}
\begin{equation}\label{eq:KrSaakId2}
  {\mathsf K}^{21}_\omega(\lambda)=-\cP({\lambda})\cQ(\omega)^{\langle*\rangle},\quad
  {\mathsf K}^{22}_\omega(\lambda)=\cP({\lambda})\cP(\omega)^{\langle*\rangle}.
\end{equation}

Setting $\lambda,\omega\in{\rho(A,{\sL})}$,
\[
\wh f=\begin{bmatrix}
                          f\\ f'
                        \end{bmatrix}=\wh\cP({\omega})^{\langle*\rangle}u_2,\,\,
                        \wh g=\begin{bmatrix}
                          g\\ g'
                        \end{bmatrix}=\wh\cP(\lambda)^{\langle*\rangle}v_2,\,\quad
u=\begin{bmatrix}
                          0\\ u_2
                        \end{bmatrix},\,\,
                        v=\begin{bmatrix}
                          0 \\ v_2
                        \end{bmatrix}\in\dC^{2p},
\]
 we can rewrite the left hand part of~\eqref{eq:ExtGreen} as
\begin{equation}\label{eq:11.24}
\begin{split}
  [f',g]_\sH-[f,g']_\sH=(\ov\omega-\lambda)[ f,g]_\sH
   =(\ov\omega-\lambda)[\cP(\omega)^{\langle*\rangle}u_2,\cP(\lambda)^{\langle*\rangle}v_2]_\sH.
   \end{split}
\end{equation}
In view of~\eqref{eq:Formula_W} the right hand part of~\eqref{eq:ExtGreen} takes the form
\begin{equation}\label{eq:11.25}
\begin{split}
(\Gamma_0\wh g)^*(\Gamma_1\wh f)-(\Gamma_1\wh g)^*(\Gamma_0\wh f)
&=i\left(\Gamma\wh\cP(\lambda)^{\langle*\rangle}v_2\right)^* J_p\left(\Gamma\wh\cP(\omega)^{\langle*\rangle}u_2\right)\\
  &=iv^*\wh W({\lambda})J_p\wh W(\omega)^*u
  =(\ov\omega-\lambda)v_2^*{\mathsf K}^{22}_\omega(\lambda)u_2.
   \end{split}
\end{equation}
Comparing of \eqref{eq:11.24} and \eqref{eq:11.25} proves the second identity in~\eqref{eq:KrSaakId2}.

Similarly, setting
$\wh f=\wh\cP({\omega})^{\langle*\rangle}u_2$,
$\wh g=\wh\cQ({\lambda})^{\langle*\rangle}v_1$,
$u_2,v_1\in{\dC^p}$,  $\lambda,\omega\in{\rho(A,{\sL})}$, we can rewrite the left hand part of~\eqref{eq:ExtGreen} as
\begin{equation}\label{eq:W21A}
\begin{split}
  [f',g]_\sH-\langle f,g'\rangle_{+,-}&=(\ov\omega-\lambda)[ f,g]_\sH-\langle f,{L}v_1\rangle_{+,-}\\
   &=(\ov\omega-\lambda)v_1^*
   \cQ(\lambda)J_p\cP(\omega)^*u_2-v_1^*u_2,
   \end{split}
\end{equation}
while the right hand part of~\eqref{eq:ExtGreen} takes the form
\begin{equation}\label{eq:W21B}
\begin{split}
&(\wh\Gamma_0\wh g)^*(\Gamma_1\wh f)-(\wh\Gamma_1\wh g)^*(\Gamma_0\wh f)
=iv_2^*\left( \Gamma\wh\cQ({\lambda})^{\langle*\rangle}\right)^* J_p\left(\Gamma\wh\cP(\omega)^{\langle*\rangle}\right)u_2\\
  &=-i\begin{bmatrix}
                          v_1^* & 0
                        \end{bmatrix}
  \wh W({\lambda})J_p\wh W(\omega)^*\begin{bmatrix}
                         0 \\ u_2
                        \end{bmatrix}
  =-(\ov\omega-\lambda)v_1^*{\mathsf K}^{22}_\omega(\lambda)u_2
  -v_1^*u_2.
   \end{split}
\end{equation}
Comparing of \eqref{eq:W21A} and \eqref{eq:W21B} proves
the second identity  in~\eqref{eq:KrSaakId1}
as well as the first identity in~\eqref{eq:KrSaakId2} since
${\mathsf K}^{21}_\omega(\lambda)=\left({\mathsf K}^{12}_\lambda(\omega)\right)^*$.

To prove the first identity  in~\eqref{eq:KrSaakId1} let us set
$  \lambda,\omega\in{\rho(A,{\sL})}$ and
\[
\wh f=\begin{bmatrix}
                          f\\ f'
                        \end{bmatrix}=-\wh\cQ({\omega})^{\langle*\rangle}u_1,\quad
                        \wh g=\begin{bmatrix}
                          g\\ g'
                        \end{bmatrix}=-\wh\cQ({\lambda})^{\langle*\rangle}v_1,\quad
u=\begin{bmatrix}
                          u_1 \\ 0
                        \end{bmatrix},
                        \quad v=\begin{bmatrix}
                          v_1 \\ 0
                        \end{bmatrix}\in\dC^{2p}.
\]
  Then the left hand part (LHP) of~\eqref{eq:ExtGreen2} equals to
 \[
 \begin{split}
 LHP=
    [f'+{L}u_1,g]_\sH-[f, g'+{L}v_1]_\sH
  =(\ov\omega-\lambda)[\cQ({\omega})^{\langle*\rangle}u_1,\cQ({\lambda})^{\langle*\rangle}v_1]_\sH.
  \end{split}
\]
Notice that, by~\eqref{eq:PQ_pr2},
\[
\left\langle {L}u_1,\cQ({\lambda})^{\langle*\rangle}v_1-\wt\bR {L}v_1\right\rangle_{-,+}
   =\left\langle \cQ({\omega})^{\langle*\rangle}u_1-\wt\bR {L}u_1, {L}v_1\right\rangle_{+,-}=0
\]
and hence the right hand part (RHP) of~\eqref{eq:ExtGreen2} equals to
 \begin{multline}
 RHP=iv^*\wh W({\lambda})J_p\wh W(\omega)^*u
   +\left\langle {L}u_1,-\cQ({\lambda})^{\langle*\rangle}v_1+\wt\bR {L}v_1\right\rangle_{-,+}\\
   -\left\langle -\cQ({\omega})^{\langle*\rangle}u_1+\wt\bR {L}u_1, {L}v_1\right\rangle_{+,-}
   =iv^*\wh W({\lambda})J_p\wh W(\omega)^*u
   =(\ov\omega-\lambda)v_1^*{\mathsf K}^{11}_\omega(\lambda)u_1.
   \end{multline}
This implies the first identity  in~\eqref{eq:KrSaakId1}.
\medskip

(ii) By Lemma \ref{lem:PreresM} (ii), the kernel
    ${\mathsf{N}}^{{\sA}_{\Pi{\sL}}}_\omega({\lambda})$ has $\kappa$ negative
    squares on   $\rho(A,\sL)\cap \rho(A_0)$.
By Theorem \ref{thm:Gres_F1} (iii),  the kernel
  ${\mathsf K}_\omega(\lambda)$ also
  has $\kappa$ negative squares on $\rho(A,\sL)\cap \rho(A_0)$
  and so  $W\in \cW_{\kappa}(J_p)$.
\medskip

(iii) Assume that $\lambda\in\rho_s(A,{\sL})$.
Then~\eqref{eq:Christ_Id}  implies the identities
\begin{equation}\label{eq:11.W*W}
\wh W({\lambda})J_p\wh W(\ov{\lambda})^*=J_p,\quad  \wh W(\ov{\lambda})^*J_p\wh W({\lambda})=J_p, \quad \lambda\in\rho_s(A,{\sL})
\end{equation}
Let us  decompose the matrices $W_{\Pi{\sL}}({\lambda})$ and $\wh W({\lambda})$ into four $p\times p$-blocks:
  \[
  W_{\Pi{\sL}}({\lambda})=[w_{ij}({\lambda})]_{i,j=1}^2,\qquad
  \wh W({\lambda})=[\wh w_{ij}({\lambda})]_{i,j=1}^2.
  \]
The equalities in~\eqref{eq:11.W*W} are equivalent to the following conditions on the components of the matrix $ \wh W $:
\begin{equation}\label{LR_39}
\wh w_{21}\wh w_{22}^\#=\wh w_{22}\wh w_{21}^\#,\ \
\wh w_{11}\wh w_{12}^\#=\wh w_{12}\wh w_{11}^\#,\ \
\wh w_{11}\wh w_{22}^\#-\wh w_{12}\wh w_{21}^\#=I_{p}.
\end{equation}
\begin{equation}\label{LR_38}
 \wh w_{12}^\#\wh w_{22}=\wh w_{22}^\#\wh w_{12},\ \
 \wh w_{11}^\#\wh w_{21}=\wh w_{21}^\#\wh w_{11},\ \
\wh w_{11}^\#\wh w_{22}-\wh w_{21}^\#\wh w_{12}=I_p,
\end{equation}

By~\eqref{eq:Formula_W}, we have
\begin{equation}\label{eq:11.Wh_w}
  \begin{split}
     \wh w_{11}({\lambda})^* & =-\wh \Gamma_0\wh\cQ({\lambda})^{\langle*\rangle},\qquad
     \wh w_{21}({\lambda})^*  =\wh \Gamma_0\wh\cP(\lambda)^{\langle*\rangle},\\
     \wh w_{12}({\lambda})^* & =-\Gamma_1\wh\cQ({\lambda})^{\langle*\rangle},\qquad
     \wh w_{22}({\lambda})^*  =\Gamma_1\wh\cP(\lambda)^{\langle*\rangle}.
  \end{split}
\end{equation}
Hence, we obtain explicit formulas for the elements of the matrix $\sA_{\Pi{\sL}}({\lambda})=(a_{ij}({\lambda}))_{i,j=1}^2$ by means of $ \wh w_{ij}({\lambda})$. By~\eqref{eq:Lres2A} and \eqref{eq:Formula_W},
\begin{equation} \label{eq:11.a11}
  a_{11}({\lambda})=M({\lambda})=\wh w_{22}(\ov{\lambda})^*\wh w_{21}(\ov{\lambda})^{-*}
  =\wh w_{22}^\#({\lambda})\wh w_{21}^\#({\lambda})^{-1}.
\end{equation}
Next, it follows from \eqref{eq:Lres2A}, \eqref{eq:11.Wh_w} and~\eqref{eq:PQ_pr1} that for all $u\in\dC^p$
\[
  a_{21}({\lambda})\wh w_{21}(\ov{\lambda})^*u= {L}^{\langle*\rangle}\gamma({\lambda})\Gamma_0\wh\cP(\lambda)^{\langle*\rangle}u
  = {L}^{\langle*\rangle}\cP(\lambda)^{\langle*\rangle}u=u,
\]
whence
\begin{equation} \label{eq:11.a21}
  a_{21}({\lambda})=\wh w_{21}^\#({\lambda})^{-1},\quad
  a_{12}({\lambda})=a_{21}^\#({\lambda})=\wh w_{21}( \lambda)^{-1}.
\end{equation}
To find the expression of $a_{22}({\lambda})= {L}^{\langle*\rangle}\wh R^0_\lambda {L}$ we consider the problem
\[
\wh f=\begin{bmatrix}
  f \\
  {L}u
\end{bmatrix}\in A^{\langle*\rangle}-\lambda I,\quad
\Gamma_0\wh f=0,\quad
\wh f=\begin{bmatrix}
  f \\
  {L}u+\lambda f
\end{bmatrix}\in A^{\langle*\rangle},\quad
\begin{array}{l}
f\in\sH,\\
u\in\dC^p.
\end{array}
\]
It follows from \eqref{eq:PQ*} that this problem has a solution of the form
\[
\wh f=\wh\cQ(\ov{\lambda})^{\langle*\rangle}u-\wh\cP(\ov{\lambda})^{\langle*\rangle}v\quad\text{with}\quad
v\in\dC^p.
\]
In view of \eqref{eq:11.Wh_w},  the equality $\wh\Gamma_0\wh f=0$ implies that
\[
\wh\Gamma_0\wh f=\wh \Gamma_0\wh\cQ(\ov{\lambda})^{\langle*\rangle}u-\Gamma_0\wh\cP(\ov{\lambda})^{\langle*\rangle}v=
-\wh w_{11}^\#({\lambda})u-\wh w_{21}^\#({\lambda})v=0
\]
and hence $v=-\wh w_{21}^\#({\lambda})^{-1}\wh w_{11}^\#({\lambda})u$. Therefore,
\[
f=\cQ(\ov{\lambda})^{\langle*\rangle}u
+\cP(\ov{\lambda})^{\langle*\rangle}\wh w_{21}^\#(\lambda)^{-1}\wh w_{11}(\ov{\lambda})^*u
=\wt R^0_\lambda {L}u,
\]
and, by~\eqref{eq:11.Wh_w}, \eqref{eq:PQ_pr1}, \eqref{eq:PQ_pr2},
\begin{equation}\label{eq:11.a22}
\begin{split}
  a_{22}({\lambda})u&={L}^{\langle*\rangle}(\wt R^0_\lambda-\wt\bR){L}\\
  &={L}^{\langle*\rangle}(\cQ(\ov{\lambda})^{\langle*\rangle}u-\wt\bR {L})u
  +{L}^{\langle*\rangle}\cP(\ov{\lambda})^{\langle*\rangle}
  \wh w_{21}^\#({\lambda})^{-1}\wh w_{11}^\#({\lambda})u\\
  &=\wh w_{21}^\#({\lambda})^{-1}\wh w_{11}^\#({\lambda})u.
\end{split}
\end{equation}
It follows from  \eqref{eq:11.a11}--\eqref{eq:11.a22} that the preresolvent matrix takes the form
\begin{equation}\label{eq:11.Preres_A}
  \sA_{\Pi{\sL}}({\lambda})=\begin{bmatrix}
  \wh w_{22}^\#({\lambda})\wh w_{21}^\#({\lambda})^{-1} & \wh w_{21}(\lambda)^{-1} \\
  \wh w_{21}^\#({\lambda})^{-1}
  & \wh w_{21}^\#({\lambda})^{-1}\wh w_{11}^\#({\lambda})
         \end{bmatrix},\quad \lambda\in\rho_s(A,{\sL}).
\end{equation}
Finally, \eqref{eq:LresM}, 
\eqref{eq:11.Preres_A}, \eqref{LR_39} and \eqref{LR_38} imply that for all $\lambda\in\rho_s(A,{\sL})\cap\rho(A_0)$
\[
\begin{split}
   w_{11}({\lambda}) & =a_{22}({\lambda})a_{12}({\lambda})^{-1}=
   \wh w_{21}^\#({\lambda})^{-1}\wh w_{11}^\#({\lambda})\wh w_{21}( \lambda)=\wh w_{11}({\lambda}), \\
   w_{12}({\lambda}) & =a_{22}({\lambda})a_{12}({\lambda})^{-1}a_{11}({\lambda}) -a_{21}({\lambda})\\
      &=\wh w_{11}({\lambda})\wh w_{22}^\#({\lambda})\wh w_{21}^\#({\lambda})^{-1}-\wh w_{21}^\#({\lambda})^{-1}
     =\wh w_{12}({\lambda}), \\
   w_{21}({\lambda}) & =a_{12}({\lambda})^{-1}=\wh w_{21}({\lambda}), \\
   w_{22}({\lambda}) & =a_{12}({\lambda})^{-1}a_{11}({\lambda})=
   \wh w_{21}({\lambda})\wh w_{22}^\#({\lambda})\wh w_{21}^\#({\lambda})^{-1}
   =\wh w_{22}({\lambda}), \\
\end{split}
\]
and hence, $W_{\Pi{\sL}}({\lambda})=\wh W({\lambda})$ for all $\lambda\in\rho_s(A,{\sL})\cap\rho(A_0)$.
\end{proof}

\begin{theorem}\label{prop:Gresolv}
Let $\Pi=  (\dC^p,\Gamma_0,\Gamma_1) $ be a boundary triple for
$A^{*}$ and let ${\sL}\subset\sH_-$ be a gauge for $A$,
 let  $W_{\Pi{\sL}}(\lambda)$ be the $\Pi{\sL}$-resolvent matrix of $A$ defined on $\rho(A,{\sL})\cap\rho({A}_0)$ by
\eqref{eq:LresM}, and let the condition \eqref{eq:A_N_k} holds.
 Then  the formula
\begin{equation}\label{eq:Lres}
(r(\lambda)=){L}^{\langle*\rangle}\wh{\mathbf R}_{\lambda}{L}
 =T_{W_{\Pi{\sL}}(\lambda)}[\tau({\lambda})], \quad
 \lambda\in\rho(A,{\sL})\cap\rho(A_0)\cap\rho(\wt A),
\end{equation}
establishes a one--to--one correspondence
$r(\cdot)\longleftrightarrow  \tau(\cdot)$   between the set  
of ${\sL}$-regular ${\sL}$-resolvents $r$ of $A$ of index $\wt\kappa$ and the set of
 $\tau\in\wt\cN_{\wt\kappa-\kappa}^{\ptp}$ such that
 $r\in \cN_{\wt\kappa}^{p\times p}$.
  \end{theorem}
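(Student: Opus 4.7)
The strategy is to reduce the claim to Lemma~\ref{lem:Gres_F1} by identifying the preresolvent formula \eqref{eq:11.Lres1} with the linear-fractional transformation $T_{W_{\Pi\sL}(\lambda)}[\tau(\lambda)]$ generated by the $\Pi\sL$-resolvent matrix \eqref{eq:LresM}. All the substantive work---the bijection between $\sL$-regular $\sL$-resolvents and admissible parameters, as well as the matching of Nevanlinna classes---has already been done in Lemma~\ref{lem:Gres_F1}; what remains is a short algebraic verification.

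First I would invoke Lemma~\ref{lem:Gres_F1}(i), which delivers the bijective correspondence $r(\cdot)\longleftrightarrow\tau(\cdot)$ between the set of $\sL$-regular $\sL$-resolvents of index $\wt\kappa$ and the set of parameters $\tau\in\wt\cN_{\wt\kappa-\kappa}^{p\times p}$ satisfying $r\in\cN_{\wt\kappa}^{p\times p}$, realized through the preresolvent identity
\[
r(\lambda) = \mathfrak{a}_{22}(\lambda) - \mathfrak{a}_{21}(\lambda)\bigl(\tau(\lambda)+\mathfrak{a}_{11}(\lambda)\bigr)^{-1}\mathfrak{a}_{12}(\lambda).
\]
Thus the existence, uniqueness, and class-matching parts of the correspondence are already settled, and only the reformulation of the right-hand side as $T_{W_{\Pi\sL}(\lambda)}[\tau(\lambda)]$ has to be checked.

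Second, I would perform this verification by direct computation. For $\lambda\in\rho(A,\sL)\cap\rho(A_0)$, Lemma~\ref{lem:PreresM}(iii) guarantees that $\mathfrak{a}_{12}(\lambda)$ is invertible, so the blocks of $W_{\Pi\sL}(\lambda)$ listed in \eqref{eq:LresM} make sense. When $\tau(\lambda)$ is the graph of an operator, I would substitute those entries into the standard form \eqref{eq:2.2.0} and factor $\mathfrak{a}_{12}(\lambda)^{-1}(\tau(\lambda)+\mathfrak{a}_{11}(\lambda))$ to the right of both numerator and denominator; the ensuing cancellation collapses the expression precisely to the preresolvent formula above. For a genuine $\cN_{\wt\kappa-\kappa}^{p\times p}$-family $\tau(\lambda)=\ran\,\text{\rm col}\{\varphi(\lambda),\psi(\lambda)\}$, the same factorization applies to \eqref{eq:Lres_phipsi} on the open set $\Lambda_{\varphi,\psi}$ from \eqref{eq:Lambda}, yielding the required identity there; at the remaining (at most isolated) degenerate points the Shmul'yan interpretation~\eqref{eq:App2:LFT} together with meromorphy of $r(\lambda)$ on $\rho(\wt A)$ propagates the identity throughout $\rho(A,\sL)\cap\rho(A_0)\cap\rho(\wt A)$.

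The only point requiring a moment of care, though not deep, is ensuring $\Lambda_{\varphi,\psi}\ne\emptyset$ for every admissible parameter. Here the hypothesis~\eqref{eq:A_N_k} enters: combined with Theorem~\ref{thm:ResM}(ii) it forces $W_{\Pi\sL}\in\cW_\kappa(J_p)$, and then Theorem~\ref{thm:Gres_F1}(iv) shows that $T_{W_{\Pi\sL}(\lambda)}[\tau(\lambda)]$ is a well-defined $\wt\cN^{p\times p}$-family. Consequently the denominator $w_{21}\psi+w_{22}\varphi$ is nonsingular on an open dense subset of $\rho(A,\sL)\cap\rho(A_0)$, which suffices for meromorphic extension of the identity across the isolated exceptional points and completes the proof.
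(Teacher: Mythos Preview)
Your proposal is correct and follows essentially the same route as the paper: invoke Lemma~\ref{lem:Gres_F1} for the bijection and class-matching, note via Lemma~\ref{lem:PreresM}(iii) that $\mathfrak a_{12}(\lambda)$ is invertible on the relevant set, and then algebraically rewrite the preresolvent formula \eqref{eq:11.Lres1} as the linear-fractional transformation $T_{W_{\Pi\sL}(\lambda)}[\tau(\lambda)]$ using the block entries \eqref{eq:LresM}. The paper's proof is in fact terser than yours---it carries out the algebraic manipulation in three lines and does not separately discuss the family case or the non-emptiness of $\Lambda_{\varphi,\psi}$---so your additional care there is fine but not strictly needed.
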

\begin{proof} By Lemma~\ref{lem:PreresM}(iii), $a_{12}(\lambda)$ is invertible for $\lambda\in\rho(A,{\sL})\cap\rho(A_0)\cap\rho(\wt A)$.
  It follows from~\eqref{eq:11.Lres1} and~\eqref{eq:LresM} that
\begin{equation}\label{eq:Lres.2}
  \begin{split}
{L}^{\langle*\rangle}\wh{\mathbf R}_{\lambda}{L}
&=a_{22}({\lambda})-a_{21}({\lambda})(\tau({\lambda})+a_{11}({\lambda}))^{-1}
a_{12}({\lambda})\\
&=a_{22}({\lambda})-a_{21}({\lambda})(a_{12}({\lambda})^{-1}\tau({\lambda})+
a_{12}({\lambda})^{-1}a_{11}({\lambda}))^{-1}\\
&=(w_{11}({\lambda})\tau({\lambda})+w_{12}({\lambda}))
(w_{21}({\lambda})\tau({\lambda})+w_{22}({\lambda}))^{-1}.
\end{split}
\end{equation}
Now the statement follows from~Lemma~\ref{lem:Gres_F1}.
%
\end{proof}
\begin{remark}
It may happen that for $W\in\cW_{\kappa_1}(J_p)$ and for some $\tau\in \wt\cN_{\kappa_2}^{\ptp}$
the set $\Lambda_{\varphi,\psi}$ is empty and then the linear fractional transform
$r=T_W[\tau]\in \wt\cN_{\kappa'}^{\ptp}$ is a family of linear relations
with non-trivial multivalued parts.
In this case $r$ is not an $\sL$-resolvent of $A$.
Moreover, if for some $\tau\in \wt\cN_{\kappa_2}^{\ptp}$
the set $\Lambda_{\varphi,\psi}$ is not empty but the index $\kappa'$ is less than
 $\kappa_1+\kappa_2$, then the $\sL$-resolvent $r$ is not $\sL$-regular.
These effects will never occur if $A$ is a symmetric linear relation in a Hilbert space.
\end{remark}
\begin{corollary}\label{cor:Gres_phipsi}
Let in the assumptions of Theorem~\ref{prop:Gresolv} $A$ be a closed symmetric linear relation in a Hilbert space and let  $W_{\Pi{\sL}}(\lambda)$ be the $\Pi{\sL}$-resolvent matrix of $A$ defined by~\eqref{eq:LresM}.
 Then  the formula 
 \begin{equation}\label{eq:LresH}
   {L}^{\langle*\rangle}\wh{\mathbf R}_{z}{L}=T_{W_{\Pi{\sL}}}[\tau({z})]
 \end{equation}
 establishes a one--to--one correspondence  between the set
of all ${\sL}$-resolvents of $A$ of index $0$ and the set of all
Nevanlinna families $\tau\in \wt\cR^{\ptp}=\wt\cN_{0}^{\ptp}$.
\end{corollary}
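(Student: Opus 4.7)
The plan is to deduce this corollary by specializing Theorem~\ref{prop:Gresolv} to the case $\kappa=\wt\kappa=0$, in which every hypothesis of that theorem becomes automatic.

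First I would verify the standing assumption~\eqref{eq:A_N_k} of Theorem~\ref{prop:Gresolv} in the Hilbert-space setting. Since $\sH$ is Hilbert we have $\kappa=0$, and $\sH_{\mathfrak A}$ is by construction a subspace of $\sH$; hence $\kappa_-(\sH_{\mathfrak A})\le\kappa_-(\sH)=0=\kappa$, forcing equality. So~\eqref{eq:A_N_k} holds unconditionally, and the $\Pi\sL$-preresolvent matrix ${\mathfrak A}_{\Pi\sL}$ lies in $\cR^{p\times p}$ by Lemma~\ref{lem:PreresM}(iv), while $W_{\Pi\sL}\in\cW_0(J_p)$ by Theorem~\ref{thm:ResM}(ii).

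Next I would observe that in the Hilbert case every $\sL$-resolvent of $A$ automatically has index $0$ and is $\sL$-regular. Indeed, by the Naimark--Krein dilation theorem a minimal self-adjoint extension $\wt A$ of a Hilbert-space symmetric relation $A$ acts on a Hilbert space $\wt\sH\supseteq\sH$, so $\textup{ind}_-(\wt\sH)=0$, hence $\wt\kappa=0$. The associated subspace $\sH'$ in~\eqref{eq:9.10K} satisfies $\textup{ind}_-(\sH')\le\textup{ind}_-(\wt\sH)=0$, so its index equals $\wt\kappa=0$, i.e. the resolvent is $\sL$-regular. Conversely, by Lemma~\ref{lem:9.6}(i), every such $r$ lies in $\cN_{\kappa'}^{p\times p}$ with $0\le\kappa'\le\wt\kappa=0$, so $r\in\cR^{p\times p}=\cN_0^{p\times p}$ and the auxiliary membership constraint appearing in Theorem~\ref{prop:Gresolv} is redundant.

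Combining these observations, Theorem~\ref{prop:Gresolv} applied with $\kappa=\wt\kappa=0$ yields a bijection between the set of all $\sL$-resolvents $r$ of $A$ (each of which is $\sL$-regular of index $0$) and the parameter class $\wt\cN_{\wt\kappa-\kappa}^{\ptp}=\wt\cN_0^{\ptp}=\wt\cR^{\ptp}$, realized by the Shmul'yan linear-fractional transformation~\eqref{eq:LresH}. The Remark preceding the corollary ensures that in the Hilbert-space setting the set $\Lambda_{\varphi,\psi}$ is nonempty for every Nevanlinna family $\tau=\ran\,\text{col}\{\varphi,\psi\}$, so that $T_{W_{\Pi\sL}}[\tau]$ is genuinely operator-valued rather than a proper linear-relation family, and the formula~\eqref{eq:Lres_phipsi} for $T_{W_{\Pi\sL}}[\tau(\lambda)]$ applies on $\rho(A,\sL)\cap\rho(A_0)$. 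The only delicate point in the argument is this last one — justifying that no degeneracies (empty $\Lambda_{\varphi,\psi}$ or an index drop $\kappa'<\kappa_1+\kappa_2$) can occur in the Hilbert-space case — which the Remark attributes to the standard fact that the parameters $w_{21}\psi+w_{22}\varphi$ arising from the Krein--Naimark formula are invertible off the real axis when the ambient space is Hilbert.
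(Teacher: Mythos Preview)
Your approach is correct and is exactly what the paper intends: specialize Theorem~\ref{prop:Gresolv} to $\kappa=\wt\kappa=0$ and check that all side conditions (\eqref{eq:A_N_k}, $\sL$-regularity, the membership $r\in\cN_{\wt\kappa}^{p\times p}$) become automatic in the Hilbert-space setting. In fact the paper's own ``proof'' of this corollary is only a placeholder (it reads literally ``ADD a proof that $\Lambda_{\varphi,\psi}$ is not empty and $T_{W_{\Pi{\sL}}}[\tau(z)]$ is well defined''), so your write-up is already more complete than what appears in the source.

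Two minor comments. First, your sentence invoking the Naimark--Kre\u{\i}n dilation theorem to conclude $\wt\kappa=0$ is unnecessary: the corollary is stated for $\sL$-resolvents \emph{of index~$0$}, so $\wt\kappa=0$ is part of the hypothesis, not something to be proved. What you do need (and do supply) is that for $\wt\kappa=0$ every such resolvent is automatically $\sL$-regular and lies in $\cN_0^{p\times p}$; your arguments via $\textup{ind}_-(\sH')\le\textup{ind}_-(\wt\sH)=0$ and Lemma~\ref{lem:9.6}(i) are correct. Second, the point the paper's placeholder singles out --- that $\Lambda_{\varphi,\psi}\ne\emptyset$ for every $\tau\in\wt\cN_0^{p\times p}$ --- is precisely the one step you also leave as a citation to the Remark and a ``standard fact''. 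This is the genuine content of the corollary beyond Theorem~\ref{prop:Gresolv}; a complete proof would spell out that $w_{21}(\lambda)\psi(\lambda)+w_{22}(\lambda)\varphi(\lambda)={\mathfrak a}_{12}(\lambda)^{-1}(\psi(\lambda)+M(\lambda)\varphi(\lambda))$ is invertible for $\lambda\in\rho(A,\sL)\cap\rho(A_0)$, $\lambda\notin\dR$, which follows from the Kre\u{\i}n--Naimark formula since $M\in\cR_u^{p\times p}$ and $\tau$ is a Nevanlinna family.
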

\begin{proof}
  ADD a proof that $\Lambda_{\varphi,\psi}$ is not empty and
  $T_{W_{\Pi{\sL}}}[\tau({z})]$ is well defined.
\end{proof}

\subsection{Left ${\sL}$-resolvent matrix}
For a right $\Pi{\sL}$-resolvent matrix  $W(\lambda):=W_{\Pi{\sL}}({\lambda})=\left[w_{i,j}(\lambda)\right]_{i,j=1}^2$ of $A$
and  $\tau\in \wt\cR(\cH)$ we define the left $\Pi{\sL}$-resolvent matrix of $A$
\begin{equation}\label{eq:LeftResMat}
  W^\ell_{\Pi{\sL}}(\lambda)=[w^\ell_{ij}(\lambda)]_{i,j=1}^2:=W^\#(\lambda) 
\end{equation}
and the left linear-fractional transformation
\begin{equation}\label{eq:2.2}
  T_W^{\ell}[\tau]:=(\tau(\lambda)w_{21}^{\ell}(\lambda)+w_{22}^{\ell}(\lambda))^{-1}
  (\tau(\lambda)w_{11}^{\ell}(\lambda)+w_{12}^{\ell}(\lambda)).
\end{equation}
It follows from~\eqref{eq:LresM} and the formulas
\[
{\mathfrak a}_{11}^\#({\lambda})={\mathfrak a}_{11}({\lambda}),\quad
{\mathfrak a}_{12}^\#({\lambda})={\mathfrak a}_{21}({\lambda}),\quad
{\mathfrak a}_{22}^\#({\lambda})={\mathfrak a}_{22}({\lambda}),\quad
\lambda\in\rho_s(A,{\sL}),
\]
that the left $\Pi{\sL}$-resolvent matrix of $A$ takes the form
\begin{equation}\label{eq:LresM_Left}
    W_{\Pi{\sL}}^\ell({\lambda})=\begin{bmatrix}
        {\mathfrak a}_{21}({\lambda})^{-1}{\mathfrak a}_{22}({\lambda}) & {\mathfrak a}_{21}({\lambda})^{-1}\\
    {\mathfrak a}_{11}(\lambda){\mathfrak a}_{21}({\lambda})^{-1}{\mathfrak a}_{22}({\lambda})-{\mathfrak a}_{12}({\lambda})
      &
    {\mathfrak a}_{11}({\lambda}){\mathfrak a}_{21}({\lambda})^{-1} \\
    \end{bmatrix},\quad \lambda\in\rho_s(A,{\sL}).
\end{equation}
Moreover, \eqref{eq:Formula_W} yields another formula for  the left $\Pi{\sL}$-resolvent matrix
\begin{equation}\label{eq:Left_WPQ}
   W_{\Pi{\sL}}^\ell({\lambda}):=\wh\Gamma(\wh{\sL}(\ov\lambda))^{\langle*\rangle})
 =\begin{bmatrix}
   -\wh\Gamma_0\wh\cQ(\ov\lambda)^{\langle*\rangle}  &  \Gamma_0\wh\cP(\ov\lambda)^{\langle*\rangle}\\
   -\wh\Gamma_1\wh\cQ(\ov\lambda)^{\langle*\rangle} &   \Gamma_1\wh\cP(\ov\lambda)^{\langle*\rangle}
       \end{bmatrix},\quad
 {\lambda}\in\rho_s(A,{\sL}).
\end{equation}

By Theorem~\ref{thm:Gres_F1}, for every $\tau\in \wt\cN_{\kappa_2}^{\ptp}$
 the family $\wt\tau=T_W[\tau]$
belongs to $\wt\cN_{\kappa'}^{\ptp}$ for some $\kappa'\in\dN$.
Since  $\tau^\#(\lambda)=\tau(\lambda)$ and $\wt\tau^\#(\lambda)=\wt\tau(\lambda)$,
the left and right linear-fractional transformations coincide:
\begin{equation}\label{eq:LeftLFT2}
  (T_W^{\ell}[\tau])(\lambda)=(T_W[\tau])^\#(\lambda)=(T_W[\tau])(\lambda),\quad \lambda\in\gh_W\setminus \dR.
\end{equation}
 If $\tau\in \wt\cN_{\kappa_2}^{\ptp}\setminus \cN_{\kappa_2}^{\ptp}$ it is convenient to consider the following  kernel representation of $\tau$,
see~\eqref{LR_50},
\[
\tau(\lambda)=\ker{\begin{bmatrix} {C(z)} & -{D}(z)\end{bmatrix}}
\]
where $\begin{bmatrix}
    C(\lambda)& D(\lambda)
    \end{bmatrix}$ is an $\cN_{\kappa_2}^{\ptp}$-pair connected with
    $\cN_{\kappa_2}^{\ptp}$-family $\tau$
by \eqref{eq:Nkpairs_fam}.

\begin{theorem}\label{thm:Gres_CD}
Let assumptions of Theorem~\ref{prop:Gresolv} hold and let $W_{\Pi{\sL}}^\ell({\lambda})$
be the left $\Pi{\sL}$-resolvent matrix of $A$ defined by~\eqref{eq:LresM_Left}. Then the formula
\begin{equation}\label{eq:2.2L}
  {L}^{\langle*\rangle}\wh {\mathbf R}_{\lambda}{L}=(C(\lambda)w_{12}^\ell(\lambda)+D(\lambda)w_{22}^\ell(\lambda))^{-1}
  (C(\lambda)w_{11}^\ell(\lambda)+D(\lambda)w_{21}^\ell(\lambda)),
\end{equation}
where $\lambda\in\rho(A,{\sL})\setminus\dR$, establishes a one--to--one correspondence  between the set
of ${\sL}$-regular ${\sL}$-resolvents of $A$ of index $\wt\kappa$ and the set of
$\cN_{\wt\kappa-\kappa}^{\ptp}$-pairs $\begin{bmatrix}
    C(\lambda)& D(\lambda)
    \end{bmatrix}$
such that
 ${L}^{\langle*\rangle}\wh{\mathbf R}_{\lambda}{L}\in \cN_{\wt\kappa}^{p\times p}$.
\end{theorem}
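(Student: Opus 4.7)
My plan is to deduce Theorem~\ref{thm:Gres_CD} from Theorem~\ref{prop:Gresolv} by passing from the family parametrization to the equivalent pair parametrization.

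First, by Lemma~\ref{lem:Nkpairs_fam}, every $\cN_{\wt\kappa-\kappa}^{\ptp}$-pair $\begin{bmatrix} C(\lambda) & D(\lambda)\end{bmatrix}$ corresponds to an $\cN_{\wt\kappa-\kappa}^{\ptp}$-family $\tau(\lambda)=\ran\begin{bmatrix}\varphi(\lambda)\\\psi(\lambda)\end{bmatrix}$ with $\psi=C^\#$ and $\varphi=D^\#$, which admits the kernel representation $\tau(\lambda)=\ker\begin{bmatrix}C(\lambda) & -D(\lambda)\end{bmatrix}$ from~\eqref{LR_50}; this correspondence is bijective after identifying pairs differing by left multiplication by an invertible holomorphic mvf with families differing by right multiplication by such a mvf. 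Theorem~\ref{prop:Gresolv} supplies the bijection $\tau\longleftrightarrow r$ between such families and $\sL$-regular $\sL$-resolvents $r=L^{\langle*\rangle}\wh{\mathbf R}_{\lambda}L$ of index $\wt\kappa$ in $\cN_{\wt\kappa}^{p\times p}$, via $r=T_{W_{\Pi\sL}}[\tau]$. Composing the two bijections will yield the bijection asserted in Theorem~\ref{thm:Gres_CD}.

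The remaining step is to rewrite $r=T_{W_{\Pi\sL}}[\tau]$ in the form~\eqref{eq:2.2L}. On the set $\Lambda_{\varphi,\psi}$ defined in~\eqref{eq:Lambda}, formula~\eqref{eq:Lres_phipsi} gives $r(\lambda)=(w_{11}\psi+w_{12}\varphi)(w_{21}\psi+w_{22}\varphi)^{-1}$. Since $r\in\cN_{\wt\kappa}^{p\times p}$ satisfies $r^\#=r$, applying the involution $\#$ to this identity and using $\psi^\#=C$, $\varphi^\#=D$, $w_{ij}^\ell=w_{ij}^\#$ (definition~\eqref{eq:LeftResMat}), together with the rules $(AB)^\#=B^\#A^\#$ and $(A^{-1})^\#=(A^\#)^{-1}$, will bring $r$ into the form on the right-hand side of~\eqref{eq:2.2L}. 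At points outside $\Lambda_{\varphi,\psi}$ the transform is to be interpreted in Shmul'yan's sense~\eqref{eq:App2:LFT}, in parallel with the treatment of the right linear-fractional transform in Theorem~\ref{prop:Gresolv}.

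The main technical point to handle carefully is the compatibility of the two quotient equivalences (pairs modulo left multiplication, families modulo right multiplication) under the correspondence of Lemma~\ref{lem:Nkpairs_fam}, so that the left linear-fractional transform is a well-defined operation on the pair side and the resulting bijection descends to the equivalence classes. The index bookkeeping, guaranteeing that $r\in\cN_{\wt\kappa}^{p\times p}$ corresponds precisely to a pair in $\cN_{\wt\kappa-\kappa}^{\ptp}$ and that the $\sL$-regularity is preserved, is inherited directly from Theorem~\ref{prop:Gresolv} and transfers through the identification relating~\eqref{eq:Lres_phipsi} and~\eqref{eq:2.2L} without modification.
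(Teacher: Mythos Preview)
Your approach is correct in outline but contains a slip and differs from the paper's argument in a way worth noting.

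The slip: from $W_{\Pi\sL}^\ell(\lambda)=W_{\Pi\sL}^\#(\lambda)=W_{\Pi\sL}(\bar\lambda)^*$ one gets $w_{ij}^\ell=w_{ji}^\#$, not $w_{ij}^\ell=w_{ij}^\#$ as you wrote; the block transpose is essential. With the correct identification your $\#$-computation does land on~\eqref{eq:2.2L}: applying $\#$ to $r=(w_{11}\psi+w_{12}\varphi)(w_{21}\psi+w_{22}\varphi)^{-1}$ gives
\[
r=r^\#=(C\,w_{21}^\#+D\,w_{22}^\#)^{-1}(C\,w_{11}^\#+D\,w_{12}^\#)
     =(C\,w_{12}^\ell+D\,w_{22}^\ell)^{-1}(C\,w_{11}^\ell+D\,w_{21}^\ell),
\]
exactly~\eqref{eq:2.2L}.

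The paper does not go through Theorem~\ref{prop:Gresolv} and the symmetry $r^\#=r$. Instead it starts one step earlier, from the preresolvent formula~\eqref{eq:11.Lres1} in Lemma~\ref{lem:Gres_F1}, inserts the identity $(\tau+\mathfrak a_{11})^{-1}=(C+D\mathfrak a_{11})^{-1}D$ coming from the kernel representation $\tau=\ker[C\ -D]$, and then manipulates the $\mathfrak a_{ij}$ directly until the expression matches the blocks of $W_{\Pi\sL}^\ell$ as displayed in~\eqref{eq:LresM_Left}. Your route is shorter and more conceptual (it exploits the built-in symmetry of generalized Nevanlinna functions and the very definition $W^\ell=W^\#$), while the paper's computation is more hands-on but avoids invoking $r^\#=r$ and the Shmul'yan-sense caveat, since it never passes through the right linear-fractional form at all. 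Both rely on Lemma~\ref{lem:Nkpairs_fam} for the bijection between pairs and families, and the index and $\sL$-regularity bookkeeping is, as you say, inherited unchanged.
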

\begin{proof} Using the
connection $\tau(\lambda)=\ker{\begin{bmatrix} {C(\lambda)} & -{D}(\lambda)\end{bmatrix}}$ between all $\cN_{\wt\kappa-\kappa}^{\ptp}$-families
$\tau(\lambda)$
and $\cN_{\wt\kappa-\kappa}^{\ptp}$-pairs $\begin{bmatrix} {C(\lambda)} & {D}(\lambda)\end{bmatrix}$,
see Lemma~\ref{lem:Nkpairs_fam},
and the formula
\[
(\tau({\lambda})+{\mathfrak a}_{11}({\lambda}))^{-1}=(C(\lambda)+D(\lambda)a_{11}(\lambda))^{-1}D(\lambda)
\]
we obtain from
\eqref{eq:11.Lres1} that for $ \lambda\in\rho(A,{\sL})\cap\rho(A_0)\cap\rho(\wt A)$
\begin{equation}\label{eq:Lres.2A}
  \begin{split}
{L}^{\langle*\rangle}\wh{\mathbf R}_{\lambda} {L}
&=a_{22}(\lambda)-a_{21}(\lambda)(C(\lambda)+D(\lambda)a_{11}(\lambda))^{-1}
D(\lambda)a_{12}(\lambda)\\
&=a_{22}(\lambda)-
(C(\lambda)a_{21}(\lambda)^{-1}+D(\lambda)a_{11}(\lambda)a_{21}(\lambda)^{-1})^{-1}D(\lambda)a_{12}(\lambda)\\
&=\left(Ca_{21}^{-1}+Da_{11}a_{21}^{-1}\right)^{-1}
\left(Ca_{21}^{-1}a_{22}+ D\left\{a_{11}a_{21}^{-1}a_{22}-a_{12}\right\}\right)
\end{split}
\end{equation}
which, by~\eqref{eq:LresM_Left}, yields~\eqref{eq:2.2}.
\end{proof}
\begin{corollary}\label{cor:Gres_CD}
Let in the assumptions of Theorem~\ref{prop:Gresolv} $A$ be a closed symmetric linear relation in a Hilbert space $\sH$ and let  $W_{\Pi{\sL}}(\lambda)$ be the $\Pi{\sL}$-resolvent matrix of $A$ defined by~\eqref{eq:LresM}.
 Then  the formula \eqref{eq:2.2}
 establishes a one--to--one correspondence  between the set
of all ${\sL}$-resolvents of $A$ of index $0$ and the set of all
$\cN_{0}^{\ptp}$-pairs $\begin{bmatrix}
    C(\lambda)& D(\lambda)
    \end{bmatrix}$.
\end{corollary}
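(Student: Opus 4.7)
The plan is to reduce Corollary~\ref{cor:Gres_CD} to Theorem~\ref{thm:Gres_CD} in the case $\kappa=0$, $\wt\kappa=0$ by verifying that the two qualifications appearing in that theorem hold automatically in the Hilbert-space setting: every $\sL$-resolvent of index~$0$ is $\sL$-regular, and the Nevanlinna-class membership $L^{\langle*\rangle}\wh{\mathbf R}_{\lambda}L\in\cN_{0}^{\ptp}$ is automatic. After that the only non-formal step is to show that the linear-fractional transform~\eqref{eq:2.2} produces a single-valued (rather than multivalued) function, which the Remark preceding the corollary pinpoints as the place where the Hilbert-space hypothesis is essential.

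First I would observe that for a minimal representation~\eqref{eq:genres} of a generalized resolvent of index $\wt\kappa=0$ of a symmetric relation in a Hilbert space, the representing space $\wt\sH$ is itself a Hilbert space. Hence every closed subspace of $\wt\sH$ is non-degenerate of negative index zero, and in particular the subspace $\sH'$ from~\eqref{eq:9.10K} satisfies $\textup{ind}_-(\sH')=0=\wt\kappa$. By Definition~\ref{def:11.Pl-res,matr} every $\sL$-resolvent of index~$0$ is therefore $\sL$-regular. The hypothesis~\eqref{eq:A_N_k} of Theorem~\ref{thm:ResM}(ii) collapses in this setting to $\kappa_-(\sH_{\mathfrak A})=0$, which is automatic since $\sH_{\mathfrak A}\subset\sH$ is Hilbert; consequently $W_{\Pi\sL}\in\cW_{0}(J_p)$ and the kernel $\mathsf K_\omega(\lambda)$ from~\eqref{kerK0} is non-negative.

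Second, for any $\cN_{0}^{\ptp}$-pair $\begin{bmatrix} C(\lambda) & D(\lambda)\end{bmatrix}$, Lemma~\ref{lem:Nkpairs_fam} associates an $\cN_{0}^{\ptp}$-family $\tau\in\wt\cN_{0}^{\ptp}$, and Theorem~\ref{thm:Gres_F1}(iv) applied with $\kappa_1=\kappa_2=0$ gives $T_{W_{\Pi\sL}}[\tau]\in\wt\cN_{0}^{\ptp}$. Thus the additional Nevanlinna-class requirement in Theorem~\ref{thm:Gres_CD} is automatic, and the bijection asserted there reduces to the desired bijection between $\sL$-resolvents of index~$0$ and $\cN_{0}^{\ptp}$-pairs.

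The step that remains, and which I expect to be the main obstacle, is to verify the well-definedness of the right-hand side of~\eqref{eq:2.2}: namely that $\Lambda_{\varphi,\psi}\neq\emptyset$ on $\rho(A,\sL)\cap\rho(A_0)$, so that $T_{W_{\Pi\sL}}[\tau]$ is the mvf~\eqref{eq:Lres_phipsi} rather than an honest multivalued family. My approach would combine three ingredients: (i) the invertibility of ${\mathfrak a}_{12}(\lambda)$ and ${\mathfrak a}_{21}(\lambda)$ on $\rho(A,\sL)\cap\rho(A_0)$, guaranteed by Lemma~\ref{lem:PreresM}(iii); (ii) the non-negativity of both kernels $\mathsf K_\omega(\lambda)$ and $\mathsf N^{CD}_\omega(\lambda)$, which in the Hilbert-space case forces the factor $w_{21}^{\ell}(\lambda)\psi(\lambda)+w_{22}^{\ell}(\lambda)\varphi(\lambda)$ to be invertible somewhere in $\rho(A,\sL)\cap\rho(A_0)$ by a Weyl-disk/positivity argument (if it were identically degenerate, the composed kernel would contradict condition (iii) of Definition~\ref{def:Nk-pair} for $\wt\tau$); and (iii) the symmetry identity $CD^\#=DC^\#$ in Definition~\ref{def:Nk-pair}. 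Once $\Lambda_{\varphi,\psi}\neq\emptyset$ is established, \eqref{eq:Lres.2A} is a genuine equality of mvf's on that set, \eqref{eq:2.2} coincides with the formula in Theorem~\ref{thm:Gres_CD}, and the corollary follows from the bijection already proved there. The reason the argument is harder in the Pontryagin case—as the preceding Remark flags—is that positivity of the two kernels is no longer available to exclude the multivalued-part pathology.
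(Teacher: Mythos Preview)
The paper does not supply a proof of Corollary~\ref{cor:Gres_CD}; it is stated and immediately followed by the next section. The only hint the paper gives is the parallel Corollary~\ref{cor:Gres_phipsi}, whose ``proof'' is literally the placeholder ``ADD a proof that $\Lambda_{\varphi,\psi}$ is not empty and $T_{W_{\Pi{\sL}}}[\tau({z})]$ is well defined.'' So the paper itself acknowledges---but does not resolve---precisely the point you single out as the main obstacle.

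Your reduction is correct and more detailed than anything the paper offers. Steps one through four are routine and accurately justified: in a Hilbert space every closed subspace has negative index zero, so $\sL$-regularity and condition~\eqref{eq:A_N_k} are automatic, and Theorem~\ref{thm:Gres_F1}(iv) with $\kappa_1=\kappa_2=0$ disposes of the $\cN_{\wt\kappa}^{\ptp}$ membership constraint in Theorem~\ref{thm:Gres_CD}. That leaves exactly the well-definedness of the linear-fractional transform, i.e.\ $\Lambda_{\varphi,\psi}\neq\emptyset$, which is the content of the paper's own placeholder.

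Your sketch for this last step is in the right spirit but still informal. A cleaner route, fully inside the paper's framework, is to bypass the positivity/Weyl-disk heuristic and go back to~\eqref{eq:Lres.2A}: the factor whose invertibility you need is $C(\lambda)w_{21}^{\ell}(\lambda)+D(\lambda)w_{22}^{\ell}(\lambda)=(C(\lambda)+D(\lambda)\mathfrak a_{11}(\lambda))\mathfrak a_{21}(\lambda)^{-1}$, and $\mathfrak a_{11}=M$ is the Weyl function. Invertibility of $C(\lambda)+D(\lambda)M(\lambda)$ for $\lambda\in\dC_+\cup\dC_-$ in the Hilbert-space case is exactly what makes Kre\u{\i}n's formula~\eqref{gres0CD} in Corollary~\ref{cor:krein} meaningful, and follows from the fact that $M\in\cR_u^{\ptp}$ (so $\Im M(\lambda)$ is definite) together with the Nevanlinna-pair conditions on $[C\ D]$. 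Invoking that standard fact, rather than a contradiction argument on the composed kernel, makes the step rigorous and keeps the proof short.
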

\section{Canonical systems of differential equations}
Let  $\cJ$ be a ${p\times p}$-matrix such that $\cJ^{*}=\cJ^{-1}=-\cJ$.
Consider the canonical differential equation  
   \begin{equation}\label{eq:can,eq-n}
\cJ f'(t)+\cF(t)f(t) = \lambda\mathcal H(t)f(t), \quad  t\in(0,l),\quad \lambda \in {\dC},
  \end{equation}
  where $f(\cdot)$  is a ${\dC}^p$-vector function and ${\ptp}$-matrix-functions $\cF(t)$ and $\cH(t)$ satisfy the assumptions
\begin{enumerate}
  \item [(A1)] $\cF(t)$ and $\cH(t)$ are real Hermitian matrix-functions
with entries from $L^1(0,l)$ and  $\cH(t)\ge 0$ for a.e. $t\in (0,l)$
  \item [(A2)]  For each absolutely continuous $f$ such that $\cJ f'(t)+\cF(t)f(t)=0$ the following implication holds
  \[
  \cH(t)f(t)=0\quad \text{a.e. }\Longrightarrow f\equiv 0 \quad \text{on } (0,l).
  \]
\end{enumerate}


Let $\cL^2_\cH(I)$ be the semi-Hilbert space of measurable $\dC^p$-valued functions $f$, such that $\langle f,f\rangle_\cH :=\int_0^l f(t)^*\cH(t)f(t)dt<\infty$.
The semi-definite inner product in $\cL^2_\cH(0,l)$  corresponding to the semi-norm $\|f\|_\cH :=\langle f,f\rangle_\cH^{1/2}$ is defined by
\begin{equation}\label{eq:InnerPr}
  \langle f,g\rangle_\cH:=\int_0^l g(t)^*\cH(t)f(t)dt.
\end{equation}
Let $L^2_\cH(0,l)$ be the factor-space $L^2_\cH(0,l)=\cL^2_\cH(0,l)/\{f\in \cL^2_\cH(0,l):\langle f,f\rangle_\cH=0\}$.
 For a function $f\in \cL^2_\cH(0,l)$ we denote by $\wt f$ the corresponding class in $L^2_\cH(0,l)$.
  Clearly,  $L^2_\cH(0,l)$ is a Hilbert space with respect to the inner product
  $\langle \wt f, \wt g\rangle_\cH = \langle f,g\rangle_\cH$.

  Define  the maximal relation $A_{\rm max}$ in  $L^2_\cH(0,l)$ by
  \[
 A_{\rm max}=\left\{\begin{bmatrix}
                     \wt  f & \wt g
                    \end{bmatrix}^T\in L^2_\cH(0,l)\times L^2_\cH(0,l):\,
                    \cJ f'+\cF f = \cH g\right\}
  \]
  where $\wt  f\in AC[0,l]$, $ \wt g\in \cL^2_\cH(I)$ are representatives of $f$ and $g$.
Let  the preminimal relation $A'$ is defined as the
 restriction of the maximal relation $A_{\rm max}$ to the elements
 $\begin{bmatrix}
                      f & g
                    \end{bmatrix}^T$ such that $f$ has compact support on $(0,l)$
and let the
 minimal relation $A_{\rm min}$ is defined as  $A_{\rm min}=\overline{A'}$.
 As is known, see~\cite{{LaTe82}},
 $A:=A_{\rm min}$ is a symmetric relation with defect numbers $n_\pm(A)=p$,
 $A_{\rm max}=A_{\rm min}^*$ and
  \[
 A_{\rm min}=\left\{\begin{bmatrix}
                     \wt  f & \wt g
                    \end{bmatrix}^T\in A_{\rm max}:\,
                     f(0)= f(l) = 0\right\}.
  \]

 Recall \cite{BeHaSn20}, that for every $g\in\sH$ and $\lambda\in\dC$ the system $\cJ f'+\cF f = {\lambda}\cH f+ \cH g$
has a unique solution $f\in AC[0,l]$.
 Next we denote by $U(\cdot,\lambda)$ the fundamental ${\ptp}$ matrix solution of the initial problem
\begin{equation}\label{Intro_canon_system_second}
   \cJ\frac{dU(t,\lambda)}{dt}+\cF(t)U(t)= {\lambda}\cH U(t,\lambda),\quad \text{a.e. on}\quad(0,l),
   \quad  U(0,\lambda)= I_n.
\end{equation}
The matrix function $U(\lambda):=U(l,\lambda)$ is called the monodromy matrix of the system~\eqref{eq:can,eq-n}.
\begin{proposition}
 \label{prop:14.51}
 Let the assumptions (A1)-(A2) hold and let $A=A_{\rm min}$ be the minimal relation associated with the canonical system~\eqref{eq:can,eq-n}. Then
\begin{enumerate}
  \item [\rm(i)]
As a boundary triple
$\Pi = \{{\dC}^p,\Gamma_0,\Gamma_1\}$ for $A_{\rm max}$ one can take
\begin{equation}\label{eq:BT_Can}
\Gamma_0 \begin{pmatrix}
              f \\
               g
             \end{pmatrix}=\frac{1}{\sqrt{2}}\bigl(f(0) + f(l)\bigr),\quad
\Gamma_1 \begin{pmatrix}
              f \\
               g
             \end{pmatrix}= -\frac{1}{\sqrt{2}}\cJ\bigl(f(0) - f(l)\bigr), \quad
\begin{pmatrix}
              f \\
               g
             \end{pmatrix}\in A_{\rm max}.
\end{equation}
  \item [\rm(ii)] The corresponding Weyl function is given by
    \begin{equation}
M(\lambda) = -\cJ(I_n - U(\lambda))(I_n+U(\lambda))^{-1}.
 \end{equation}
 \item [\rm(iii)] The $\gamma$-field is
     \begin{equation}\label{eq:gamma_lambda}
\gamma(\lambda) = \sqrt{2}U(\cdot,\lambda)(I_n+U(\lambda))^{-1}.
 \end{equation}
  \item [\rm(iv)] The adjoint to the $\gamma$-field is
     \begin{equation}\label{eq:gamma_*}
\gamma(\ov\lambda)^*f = \sqrt{2}\int_0^l(I_n+U^\#(\lambda))^{-1}U^\#(s,\lambda)\cH(s)f(s)ds.
 \end{equation}
   \item [\rm(v)] The resolvent $R^0_\lambda=(A_0-\lambda I_\sH)^{-1}$ of the linear relation $A_0:=\ker \Gamma_0$ takes the form
     \begin{equation}\label{eq:R0lambda}
(R^0_\lambda f)(t) = \frac12 U(t,\lambda)\int_0^l\left\{\sgn(s-t) \cJ-\cJ M(\lambda)\cJ\right\}U^\#(s,\lambda)\cH(s)f(s)ds.
 \end{equation}
\end{enumerate}
\end{proposition}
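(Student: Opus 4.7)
The plan is to establish (i)--(v) in turn, with (i)--(iv) being short verifications and (v) the main computation. For (i), the Green identity follows from integration by parts: for $\wh F=(f,g)^\top,\wh H=(h,k)^\top\in A_{\max}$, the equations $\cH g=\cJ f'+\cF f$, $\cH k=\cJ h'+\cF h$ together with $\cJ^*=-\cJ$ and $\cF^*=\cF$ give
\[
h^*\cH g-k^*\cH f=h^*\cJ f'+(h')^*\cJ f=\tfrac{d}{dt}(h^*\cJ f),
\]
so $[g,h]_\sH-[f,k]_\sH=h(l)^*\cJ f(l)-h(0)^*\cJ f(0)$, which matches $(\Gamma_1\wh F,\Gamma_0\wh H)_{\dC^p}-(\Gamma_0\wh F,\Gamma_1\wh H)_{\dC^p}$ after inserting~\eqref{eq:BT_Can}. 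Surjectivity of $\Gamma$ reduces to the implication $\Gamma\wh F=0\Rightarrow\wh F\in A_{\min}$ (immediate from~\eqref{eq:BT_Can}, since $\Gamma\wh F=0$ forces $f(0)=f(l)=0$), and a dimension count using $n_\pm(A)=p$ then gives $\Gamma(A_{\max})=\dC^{2p}$.

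For (ii)--(iii), any $\wh f_\lambda\in\wh\sN_\lambda$ has $f_\lambda=U(\cdot,\lambda)u$ with $u\in\dC^p$ by uniqueness of the Cauchy problem for~\eqref{Intro_canon_system_second}; hence $f_\lambda(0)=u$, $f_\lambda(l)=U(\lambda)u$, and
\[
\Gamma_0\wh f_\lambda=\tfrac{1}{\sqrt{2}}(I_n+U(\lambda))u,\quad \Gamma_1\wh f_\lambda=-\tfrac{1}{\sqrt{2}}\cJ(I_n-U(\lambda))u.
\]
Since $\lambda\in\rho(A_0)$ iff $I_n+U(\lambda)$ is invertible, inverting yields~\eqref{eq:gamma_lambda} and then $M(\lambda)\Gamma_0\wh f_\lambda=\Gamma_1\wh f_\lambda$ gives (ii). Part (iv) follows by expanding $[\gamma(\bar\lambda)\xi,f]_\sH$ using (iii) and the identity $U(s,\bar\lambda)^*=U^\#(s,\lambda)$.

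The bulk of the work is (v). The crucial tool is the Wronskian identity $U^\#(t,\lambda)\cJ U(t,\lambda)\equiv\cJ$, obtained by differentiating the left hand side via $\cJ U'=(\lambda\cH-\cF)U$ using $\cJ^*=-\cJ$, $\cF^*=\cF$, together with $U(0,\lambda)=I_n$; it gives $U(t,\lambda)^{-1}=\cJ^{-1}U^\#(t,\lambda)\cJ$. To compute $h=R_\lambda^0 f$ I would solve the boundary value problem $\cJ h'+(\cF-\lambda\cH)h=\cH f$, $h(0)+h(l)=0$, by variation of parameters, obtaining
\[
h(t)=U(t,\lambda)h(0)-U(t,\lambda)\int_0^t\cJ U^\#(s,\lambda)\cH(s)f(s)\,ds,
\]
with $h(0)$ determined by the boundary condition as $h(0)=(I_n+U(\lambda))^{-1}U(\lambda)\int_0^l\cJ U^\#(s,\lambda)\cH(s)f(s)\,ds$. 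After inserting the expression for $M(\lambda)$ and using the algebraic identity
\[
\cJ-\cJ M(\lambda)\cJ=2U(\lambda)(I_n+U(\lambda))^{-1}\cJ
\]
(immediate from (ii) and $\cJ^2=-I_n$), the two terms combine into the symmetric kernel $\tfrac{1}{2}(\sgn(s-t)\cJ-\cJ M(\lambda)\cJ)$ of~\eqref{eq:R0lambda}. The main obstacle is this bookkeeping step: combining the variation-of-parameters formula with the boundary-determined constant and re-expressing the latter via $M(\lambda)$ in order to exhibit the symmetric sign kernel; no essential new ideas beyond the Wronskian identity are required.
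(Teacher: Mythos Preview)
The paper states this proposition without proof; it is treated as a standard computation (the surrounding text cites \cite{LaTe82}, \cite{Orc67}, \cite{BeHaSn20} for the setup of canonical systems and the minimal/maximal relations). So there is no ``paper's own proof'' to compare against.

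Your argument is correct and is exactly the natural verification one would expect. A couple of minor remarks:
\begin{itemize}
\item In (i), your route to surjectivity is fine but slightly compressed: what you really use is that $\ker\Gamma=A_{\min}$ together with $\dim(A_{\max}/A_{\min})=n_+(A)+n_-(A)=2p$, so the induced map $A_{\max}/A_{\min}\to\dC^{2p}$ is an isomorphism. It would be worth saying this explicitly.
\item In (v), your key algebraic identity $\cJ-\cJ M(\lambda)\cJ=2U(\lambda)(I_n+U(\lambda))^{-1}\cJ$ is correct (use $\cJ^2=-I_n$ and $M=-\cJ(I-U)(I+U)^{-1}$ to get $\cJ M\cJ=(I-U)(I+U)^{-1}\cJ$, hence $\cJ-\cJ M\cJ=[(I+U)-(I-U)](I+U)^{-1}\cJ$). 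After that, splitting $\int_0^t=\int_0^l-\int_t^l$ in the variation-of-parameters formula and matching coefficients with $\tfrac12(\mathrm{sgn}(s-t)\cJ-\cJ M\cJ)$ is a one-line check, so your description of the ``bookkeeping step'' is accurate.
\end{itemize}
Nothing is missing; the proof as outlined would go through without difficulty.
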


Let $\sH_+=\dom A^*$ be the Hilbert space with the norm~\eqref{eq:S+Norm}. Since  $\Gamma_0$ and $\Gamma_1$ are bounded as operators from $A^*$ to $\dC^p$, see~\cite{DM95}, for every $u\in\dC^p$ the functional
\[
\langle \delta\otimes u,f\rangle_{-,+}=f(0)^*u,\quad f\in \sH_+,
\]
is bounded on $\sH_+$, see also~\cite[Lemma II.4.1]{{LaTe82}}. The subspace
     \begin{equation}\label{eq:G_frak}
{\sL}:=\left\{\delta\otimes u:\, u\in\dC^p\right\}
 \end{equation}
of $\sH_-$ is disjoint with $\ran(\bA-\lambda I_\sH)$ since otherwise there is $u\in\dC^p$
such that $\delta\otimes u\in \ran(\bA-\lambda I_\sH)$ and, by Lemma (vi), we get
\[
0=\langle \delta\otimes u,f_{\ov\lambda}\rangle_{-,+}=f_{\ov\lambda}(0)^*u,\quad\text{for all }\quad
f_{\ov\lambda}\in \sN_{\ov\lambda}.
\]
This implies $u=0$, and therefore the subspaces
${\sL}$ and  $\ran(\bA-\lambda I_\sH)$ are disjoint.
Hence $\rho(A,{\sL})=\dC$ and ${\sL}$ is a gauge for $A$.

\begin{proposition}
 \label{prop:PreresolvMatrix}
 Let  the assumptions of Proposition~\ref{prop:14.51} hold, let
 ${\sL}$ is given by \eqref{eq:G_frak} and let the operator ${L}:\dC^p\to{\sL}$
be defined by ${L}u=\sqrt{2}\delta\otimes u$, $u\in\dC^p$. Then
the $\Pi{\sL}$-preresolvent matrix $\sA_{\Pi{\sL}}(\lambda)$ takes the form
    \begin{equation}\label{eq:PiG_Res_M}
\sA_{\Pi{\sL}}(\lambda) =\begin{bmatrix}
                         M(\lambda)         & 2(I_n+U^\#(\lambda))^{-1} \\
                         2(I_n+U(\lambda))^{-1} & \cJ (-M(\lambda)+\Re  M(i))\cJ
                       \end{bmatrix}, \quad \lambda\in\rho(A_0),
 \end{equation}
 where $M(\lambda) = -\cJ(I_n - U(\lambda))(I_n+U(\lambda))^{-1}$.
\end{proposition}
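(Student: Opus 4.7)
The plan is to substitute into the definition~\eqref{eq:Lres2A} of the $\Pi\sL$-preresolvent matrix,
\[
\sA_{\Pi\sL}(\lambda)=\begin{bmatrix} M(\lambda) & \gamma(\ov\lambda)^{\langle*\rangle} L\\ L^{\langle*\rangle}\gamma(\lambda) & L^{\langle*\rangle}\wh R^0_\lambda L\end{bmatrix},
\]
and compute each entry using the explicit formulas in Proposition~\ref{prop:14.51}. The $(1,1)$-entry is $M(\lambda)$ by definition, which is exactly the expression given in Proposition~\ref{prop:14.51}(ii). For the off-diagonal entries I will exploit the concrete action of $L$ and its dual $L^{\langle*\rangle}$: since $Lu=\sqrt{2}\,\delta\otimes u$, the defining duality $\langle \delta\otimes u, h\rangle_{-,+}=h(0)^*u$ yields $L^{\langle*\rangle}h=\sqrt{2}\,h(0)$ for $h\in\sH_+$.

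For the $(2,1)$-entry, I plug in $h=\gamma(\lambda)v=\sqrt{2}\,U(\cdot,\lambda)(I_n+U(\lambda))^{-1}v$; evaluating at $t=0$ and using $U(0,\lambda)=I_n$ gives $L^{\langle*\rangle}\gamma(\lambda)=2(I_n+U(\lambda))^{-1}$. The $(1,2)$-entry follows by taking adjoints (it is the $\#$-reflection of the $(2,1)$-entry), producing $2(I_n+U^\#(\lambda))^{-1}$; alternatively one applies the same duality argument directly with $\gamma(\ov\lambda)v(0)=\sqrt{2}(I_n+U(\ov\lambda))^{-1}v$.

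The main work is the $(2,2)$-entry $L^{\langle*\rangle}\wh R^0_\lambda L$ with $\wh R^0_\lambda=\wt R^0_\lambda-\wt\bR$ and $\bR=\tfrac12(R^0_i+R^0_{-i})$. First I compute $\wt R^0_\lambda Lu$ using the duality characterisation $[\wt R^0_\lambda\ff,g]_\sH=\langle\ff,R^0_{\ov\lambda}g\rangle_{-,+}$ together with the explicit resolvent formula~\eqref{eq:R0lambda}. Setting $\ff=Lu=\sqrt{2}\,\delta\otimes u$ evaluates the integrand at $s\mapsto(R^0_{\ov\lambda}g)(0)^*u$; the $\sgn(s)$ factor in~\eqref{eq:R0lambda} is identically $+1$ on $(0,l)$, and after using $\cJ^*=-\cJ$, $M(\ov\lambda)^*=M(\lambda)$, one reads off
\[
\wt R^0_\lambda Lu \;=\; -U(\cdot,\lambda)\cJ\bigl(I_n+M(\lambda)\cJ\bigr)u
\]
as an element of $\sH$. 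Applying $L^{\langle*\rangle}$ (formally; one must then subtract the regulariser since $\wt R^0_\lambda Lu\notin\sH_+$ in general) gives $L^{\langle*\rangle}\wt R^0_\lambda L=-\cJ-\cJ M(\lambda)\cJ$. Specialising to $\lambda=\pm i$ and averaging produces $L^{\langle*\rangle}\wt\bR L=-\cJ-\cJ\,\Re M(i)\,\cJ$ because $M(-i)=M(i)^*$. Subtracting yields $L^{\langle*\rangle}\wh R^0_\lambda L=\cJ(-M(\lambda)+\Re M(i))\cJ$, as claimed.

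The most delicate step is the computation of the $(2,2)$-entry: the formal manipulation $L^{\langle*\rangle}(\wt R^0_\lambda Lu)=\sqrt{2}\bigl(\wt R^0_\lambda Lu\bigr)(0)$ is not legitimate on $\wt R^0_\lambda Lu$ alone (which lies in $\sH$, not in $\sH_+$), and acquires meaning only after subtracting the regulariser $\wt\bR Lu$, since $\wt R^0_\lambda-\wt\bR\in\cB(\sH_-,\sH_+)$ by Lemma~\ref{lem:3.2B}(iv). Thus I will perform the computation on the differences $(\wt R^0_\lambda-\wt R^0_{\pm i})Lu$, which genuinely lie in $\sH_+$ and admit pointwise evaluation, and recombine at the end — the singular contribution at $t=0$ cancels exactly between the two terms, leaving the formula above. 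Once this is handled, verifying the symmetry $M(-i)+M(i)=2\Re M(i)$ and assembling the four entries completes the proof.
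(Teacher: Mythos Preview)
Your approach is correct and essentially identical to the paper's. The paper computes $L^{\langle*\rangle}f=\sqrt{2}\,f(0)$, plugs in~\eqref{eq:gamma_lambda} to get $a_{21}(\lambda)=2(I_n+U(\lambda))^{-1}$, takes $a_{12}=a_{21}^\#$, and for $a_{22}$ uses the explicit formula for $\wt R^0_\lambda(\delta\otimes u)$ coming from~\eqref{eq:R0lambda} (namely $\wt R^0_\lambda(\delta\otimes u)=\tfrac12 U(\cdot,\lambda)(-\cJ-\cJ M(\lambda)\cJ)u$, cf.~\eqref{eq:Rlambda_delta}), evaluates at $t=0$, and subtracts the regulariser---exactly your plan. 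One small slip: your displayed expression for $\wt R^0_\lambda Lu$ is missing a factor $1/\sqrt{2}$ (since $Lu=\sqrt{2}\,\delta\otimes u$), but this cancels against the $\sqrt{2}$ in $L^{\langle*\rangle}$, so your final $L^{\langle*\rangle}\wt R^0_\lambda L=-\cJ-\cJ M(\lambda)\cJ$ is correct. Your remark that the pointwise evaluation only makes sense on the difference $(\wt R^0_\lambda-\wt\bR)L$ is well taken and more careful than what the paper spells out.
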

\begin{proof}
It follows from the equality
\[
u^*\left({L}^{\langle *\rangle}f\right)=\langle {L}f,u\rangle_{-,+}=\sqrt{2} u^* f(0), \quad f\in\sH_+,\quad u\in\dC^p,
\]
that ${L}^{\langle *\rangle}f=\sqrt{2} f(0)$.
  This and the equality \eqref{eq:gamma_lambda} yield the formulas for $a_{21}(\lambda)$
  and $a_{12}(\lambda)$
    \begin{equation}\label{eq:a21}
  a_{21}(\lambda)={L}^{\langle *\rangle}\gamma(\lambda)=2(I_n+U(\lambda))^{-1}, \quad \lambda\in\rho(A_0),
\end{equation}
    \begin{equation}\label{eq:a12}
  a_{12}(\lambda)=a_{21}^\#(\lambda)=2(I_n+U^\#(\lambda))^{-1}, \quad \lambda\in\rho(A_0).
\end{equation}

Next, by~\eqref{eq:R0lambda} and~\eqref{eq:cR},  we get
for $u\in\dC^p$ and $\lambda\in\rho(A_0)$
and hence, by~\eqref{eq:Lres2A} and~\eqref{eq:RegExtRes},
    \begin{equation}\label{eq:a22}
  a_{22}(\lambda)={L}^{\langle *\rangle}\wh R^0_\lambda {L}u
  ={L}^{\langle *\rangle} (\wt R^0_\lambda-\wt\cR){L}=-\cJ M(\lambda)\cJ+\cJ \text{Re }M(i)\cJ.
\end{equation}
\end{proof}

\begin{theorem}
 \label{thm:ResolvMatrix}
 Let  the assumptions of Proposition~\ref{prop:14.51} hold, let
 ${\sL}$ be given by \eqref{eq:G_frak} and let the operator ${L}:\dC^p\to{\sL}$
be defined by ${L}u=\sqrt{2}\delta\otimes u$, $u\in\dC^p$. Then
\begin{enumerate}
\item[(i)]
The left $\Pi{\sL}$-resolvent matrix $W_{\Pi{\sL}}^{\ell}(\lambda)$ takes the form
    \begin{equation}\label{eq:PiG_Res_M2}
W_{\Pi{\sL}}^{\ell}(\lambda) =\frac12\begin{bmatrix}
(U(\lambda)-I_n)\cJ +(U(\lambda)+I_n)K  & (U(\lambda)+I_n)\\
\cJ(U(\lambda)+I_n)\cJ +\cJ(U(\lambda)-I_n)K  & \cJ(U(\lambda)-I_n)
                       \end{bmatrix}. 
 \end{equation}
 where
 \begin{equation}\label{eq:KReM}
   K:=\cJ \Re M(i)\cJ.
 \end{equation}
 \item[(ii)] The formula \eqref{eq:2.2L}
 establishes a one--to--one correspondence  between the set
of all ${\sL}$-resolvents of $A$ of index $0$ and the set of all
$\cN_{0}^{\ptp}$-pairs $\begin{bmatrix}
    C(\lambda)& D(\lambda)
    \end{bmatrix}$.
\medskip
 \item[(iii)] The formula
\begin{equation}\label{eq:GRes_AB}
  {L}^{\langle*\rangle}\wh {\mathbf R}_{\lambda}{L}=(A(\lambda)U(\lambda)+B(\lambda))^{-1}
  (A(\lambda)U(\lambda)-B(\lambda))\cJ+K,
\end{equation}
 establishes a one--to--one correspondence  between the set
of all ${\sL}$-resolvents of $A$ of index $0$ and the set of all
pairs $\begin{bmatrix}
    A(\lambda)& B(\lambda)
    \end{bmatrix}$ of $\ptp$ matrix functions such that
\begin{enumerate}
  \item [(a)] $-i(\dsp A(\lambda)\cJ A(\lambda)^*-B(\lambda)\cJ B(\lambda)^*)\ge 0$ for all $\lambda\in\dC_+$;
\smallskip
  \item [(b)] $A(\lambda)\cJ A^\#(\lambda)-B(\lambda)\cJ B^\#(\lambda)=0$ for all $\lambda\in\dC_+\cup\dC_-$;
  \smallskip
  \item[(c)] $\rank \begin{bmatrix}
    A(\lambda)& B(\lambda)
    \end{bmatrix}=p$
  for all $\lambda\in\dC_+\cup\dC_-$.
\end{enumerate}

    \end{enumerate}
 \end{theorem}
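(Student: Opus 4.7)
My plan is to reduce each of the three parts to a direct computation, using Proposition~\ref{prop:PreresolvMatrix}, the $\cJ$-unitarity of the fundamental matrix $U(\cdot,\lambda)$ of~\eqref{Intro_canon_system_second}, and the general results of Section~\ref{sec:11.5}.

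For part~(i), I would substitute the blocks of $\sA_{\Pi\sL}(\lambda)$ given by~\eqref{eq:PiG_Res_M} into the defining formula~\eqref{eq:LresM_Left} of the left resolvent matrix. The entries $w^\ell_{12}={\mathfrak a}_{21}^{-1}=\tfrac12(I_n+U(\lambda))$ and $w^\ell_{22}={\mathfrak a}_{11}{\mathfrak a}_{21}^{-1}=\tfrac12\cJ(U(\lambda)-I_n)$ are immediate from $M(\lambda)=-\cJ(I_n-U(\lambda))(I_n+U(\lambda))^{-1}$ together with $\cJ^2=-I_n$. The entry $w^\ell_{11}={\mathfrak a}_{21}^{-1}{\mathfrak a}_{22}$ works out to $\tfrac12\bigl((U(\lambda)-I_n)\cJ+(U(\lambda)+I_n)K\bigr)$, using that $(I_n+U(\lambda))$ commutes with $(I_n+U(\lambda))^{-1}$. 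The only delicate piece is $w^\ell_{21}={\mathfrak a}_{11}{\mathfrak a}_{21}^{-1}{\mathfrak a}_{22}-{\mathfrak a}_{12}$, where one must relate ${\mathfrak a}_{12}(\lambda)=2(I_n+U^\#(\lambda))^{-1}$ to an expression in $U(\lambda)$ alone. This is achieved via the $\cJ$-unitarity identity $U^\#(\lambda)\cJ U(\lambda)=\cJ$, obtained by differentiating $U(t,\bar\lambda)^*\cJ U(t,\lambda)$ along~\eqref{Intro_canon_system_second} and noting that it vanishes on $(0,l)$. The identity yields $(I_n+U^\#(\lambda))^{-1}=\cJ(I_n+U(\lambda))^{-1}U(\lambda)\cJ^{-1}$, after which the verification of the claimed form of $w^\ell_{21}$ collapses to the elementary polynomial identity $(U-I_n)^2+4U=(U+I_n)^2$, giving precisely~\eqref{eq:PiG_Res_M2}.

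Part~(ii) is then an immediate application of Corollary~\ref{cor:Gres_CD}: the relation $A=A_{\mathrm{min}}$ acts in the Hilbert space $L^2_\cH(0,l)$, so $\kappa=0$ and the auxiliary hypothesis~\eqref{eq:A_N_k} is automatic, while the discussion preceding Proposition~\ref{prop:PreresolvMatrix} has already shown that $\rho(A,\sL)=\dC$.

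For part~(iii), I would substitute the explicit $w^\ell_{ij}$ from (i) into~\eqref{eq:2.2L} and make the change of variables $A:=C+D\cJ$, $B:=C-D\cJ$. Using $\cJ^2=-I_n$, the denominator collapses to $\tfrac12(AU+B)$ and the numerator to $\tfrac12\bigl[(AU-B)\cJ+(AU+B)K\bigr]$, so that~\eqref{eq:2.2L} takes exactly the form~\eqref{eq:GRes_AB}. The map $[C,D]\mapsto[A,B]$ is invertible ($C=\tfrac12(A+B)$, $D=-\tfrac12(A-B)\cJ$) and commutes with left-multiplication by an invertible holomorphic factor $\chi(\lambda)$, so the equivalence relation on pairs is preserved. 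A direct expansion using $\cJ^*=-\cJ$ yields the key identity
\begin{equation}
A(\lambda)\cJ A(\omega)^\#-B(\lambda)\cJ B(\omega)^\#=2\bigl(C(\lambda)D(\omega)^\#-D(\lambda)C(\omega)^\#\bigr),
\end{equation}
and the analogue with $*$ in place of $\#$. Hence condition~(ii) of Definition~\ref{def:Nk-pair} translates verbatim to~(b), the rank condition~(iii) becomes~(c), and the diagonal value of the kernel condition~(i) on $\dC_+$ becomes condition~(a); the remaining off-diagonal kernel positivity is recovered from~(a) together with~(b) by the standard matrix Nevanlinna-function argument.

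The main obstacle is the verification of the $w^\ell_{21}$ entry in part~(i): without the $\cJ$-unitarity of $U(\lambda)$, the mixing of $U^\#(\lambda)$ and $U(\lambda)$ in ${\mathfrak a}_{12}$ and ${\mathfrak a}_{22}$ would obstruct the simplification to a linear polynomial in $U(\lambda)$. Once that identity is in hand, every remaining step is routine algebraic manipulation.
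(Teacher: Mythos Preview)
Your proof of parts~(i) and~(ii) is correct and follows the paper's own argument essentially verbatim: the paper also substitutes the blocks of $\sA_{\Pi\sL}$ from Proposition~\ref{prop:PreresolvMatrix} into~\eqref{eq:LresM_Left}, uses the $\cJ$-unitarity $U^\#(\lambda)=-\cJ U(\lambda)^{-1}\cJ$ to rewrite ${\mathfrak a}_{12}$, and reduces the $w_{21}^\ell$ entry to the identity $(U-I_n)^2+4U=(U+I_n)^2$; part~(ii) is likewise dispatched by Corollary~\ref{cor:Gres_CD}.

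For part~(iii) you actually go further than the paper, which states the result but gives no argument. Your substitution $A=C+D\cJ$, $B=C-D\cJ$ is exactly the right one: it collapses $Cw_{12}^\ell+Dw_{22}^\ell$ to $\tfrac12(AU+B)$ and $Cw_{11}^\ell+Dw_{21}^\ell$ to $\tfrac12\bigl((AU-B)\cJ+(AU+B)K\bigr)$, yielding~\eqref{eq:GRes_AB} directly, and your identity $A\cJ A^\#-B\cJ B^\#=2(CD^\#-DC^\#)$ cleanly translates the $\cN_0^{p\times p}$-pair conditions into~(a)--(c). The only point you leave implicit is that the pointwise condition~(a) on $\dC_+$ together with~(b) recovers the full nonnegativity of the kernel ${\sf N}_\omega^{CD}$; this is indeed the standard Herglotz--Nevanlinna argument (locally $\tau=CD^{-1}$ or its inverse is a genuine $\cR^{p\times p}$-function, whose kernel positivity follows from imaginary-part positivity), so the reference is legitimate.
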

\begin{proof}
(i)   By~\eqref{eq:LresM_Left} and~\eqref{eq:PiG_Res_M},
we get
\begin{equation}\label{eq:w11}
\begin{split}
  w_{11}^{\ell}(\lambda)&=a_{21}({\lambda})^{-1}a_{22}({\lambda})\\
  &=\frac12(I_n+U(\lambda))
  \left\{(U(\lambda)-I_n)(I_n+U(\lambda))^{-1}\cJ+K\right\}\\
&=\frac12(U(\lambda)-I_n)\cJ +\frac12(U(\lambda)+I_n)K.
\end{split}
\end{equation}
Since $U^\#(\lambda)=-\cJ U(\lambda)^{-1}\cJ$, we have
\[
(I_n+U^\#(\lambda))^{-1}=-\cJ(I_n+U(\lambda)^{-1})^{-1}\cJ
=-\cJ U(\lambda)(I_n+U(\lambda))^{-1}\cJ.
\]
Hence, by~\eqref{eq:LresM_Left} and~\eqref{eq:PiG_Res_M},
\begin{equation}\label{eq:w21}
\begin{split}
  w_{21}^{\ell}(\lambda)
  &=a_{11}({\lambda})a_{21}({\lambda})^{-1}a_{22}({\lambda})-a_{12}({\lambda})\\
  &=\frac12 \cJ
  \left\{\left[(U(\lambda)-I_n)^2+4U(\lambda)\right](U(\lambda)+I_n)^{-1}\cJ
  +(U(\lambda)-I_n)K\right\}\\
&=\frac12\left\{\cJ(U(\lambda)+I_n)\cJ+\cJ(U(\lambda)-I_n)K\right\}.
\end{split}
\end{equation}
Similarly, we get from~\eqref{eq:LresM_Left} and~\eqref{eq:PiG_Res_M}
\begin{equation}\label{eq:w12}
\begin{split}
  w_{12}^{\ell}(\lambda)= a_{21}({\lambda})^{-1}=\frac12(I_n+U(\lambda))
\end{split}
\end{equation}
and
\begin{equation}\label{eq:w22}
\begin{split}
  w_{22}^{\ell}(\lambda)= a_{11}({\lambda})a_{21}({\lambda})^{-1}=\frac12\cJ(U(\lambda)-I_n).
\end{split}
\end{equation}
Now \eqref{eq:PiG_Res_M2} follows from~\eqref{eq:w11}--\eqref{eq:w22}.

\medskip
(ii) follows from Corollary~\ref{cor:Gres_CD}.
\end{proof}

In the next proposition we present another proof for the formula~\eqref{eq:PiG_Res_M2}
based on the formula~\eqref{eq:Formula_W}.

\begin{proposition}
 \label{prop:PQ_CanSyst}
 Let the assumptions (A1)-(A2) hold, let $A=A_{\rm min}$ be the minimal relation associated with the canonical system~\eqref{eq:can,eq-n}, let the operator ${L}:\dC^p\to{\sL}$
be defined by ${L}u=\sqrt{2}\delta\otimes u$, $u\in\dC^p$, and let the operator functions
$\cP$ and $\cQ$ be defined by~\eqref{eq:P_lambda} and~\eqref{eq:Q_lambda}. Then
\begin{enumerate}
  \item [\rm(i)] The operator functions
$\cP(\lambda)^*$ and $\cQ(\lambda)^*$ take the form
\begin{equation}\label{eq:P*}
  \cP(\lambda)^*=\frac{1}{\sqrt{2}}U(\cdot,\ov\lambda),\quad \lambda\in\dC,
\end{equation}
\begin{equation}\label{eq:Q*}
  \cQ(\lambda)^*=-\frac{1}{\sqrt{2}}U(\cdot,\ov\lambda)(\cJ+K),\quad \lambda\in\dC.
\end{equation}
\item[(ii)]
The left $\Pi{\sL}$-resolvent matrix $W_{\Pi{\sL}}^{\ell}(\lambda)$ takes the form
\eqref{eq:PiG_Res_M2}.
\end{enumerate}
\end{proposition}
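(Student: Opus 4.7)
The plan is to establish (i) by independent characterizations and then derive (ii) by substituting the resulting formulas into the defining expression~\eqref{eq:Left_WPQ}. For~\eqref{eq:P*}, I will use that by Lemma~\ref{lem:PQ_prop}(ii) the vector $\cP(\lambda)^{\langle*\rangle}u$ lies in $\sN_{\bar\lambda}$, so in the canonical system it is a classical absolutely continuous solution of $\cJ f' + \cF f = \bar\lambda \cH f$ and hence equals $U(\cdot,\bar\lambda) c$ for some $c\in\dC^p$. The normalization $L^{\langle*\rangle}\cP(\lambda)^{\langle*\rangle} = I_p$ from~\eqref{eq:PQ_pr1}, together with the identities $L^{\langle*\rangle} f = \sqrt{2}\,f(0)$ for $f\in\sH_+$ and $U(0,\bar\lambda) = I_n$, forces $c = u/\sqrt{2}$, giving~\eqref{eq:P*}.

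For~\eqref{eq:Q*}, I will compute the adjoint of~\eqref{eq:Q0}. Writing $\Pi^\lambda_\sL = L\cP(\lambda)$ and using $L^{\langle*\rangle}\wh R_\lambda^0 L = a_{22}(\lambda) = -\cJ M(\lambda)\cJ + K$ from Proposition~\ref{prop:PreresolvMatrix}, one has $\cQ(\lambda) = L^{\langle*\rangle} R_\lambda^0 - a_{22}(\lambda)\cP(\lambda)$ and hence
\[
\cQ(\lambda)^* = (L^{\langle*\rangle} R_\lambda^0)^* - \cP(\lambda)^* a_{22}(\lambda)^*.
\]
The first adjoint is computed from~\eqref{eq:R0lambda} evaluated at $t=0$; using $U^\#(s,\lambda)^* = U(s,\bar\lambda)$, $\cJ^* = -\cJ$ and $M(\lambda)^* = M(\bar\lambda)$, I get $(L^{\langle*\rangle} R_\lambda^0)^*u = -\tfrac{1}{\sqrt{2}} U(\cdot,\bar\lambda)(\cJ + \cJ M(\bar\lambda)\cJ)u$. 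Combining with $\cP(\lambda)^* a_{22}(\lambda)^* u = \tfrac{1}{\sqrt{2}} U(\cdot,\bar\lambda)(-\cJ M(\bar\lambda)\cJ + K)u$ (using $K^* = K$ since $\Re M(i)$ is Hermitian), the $M(\bar\lambda)$-contributions cancel exactly, yielding~\eqref{eq:Q*}.

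For part (ii) I substitute into~\eqref{eq:Left_WPQ}. The right column is immediate: $\wh\cP(\bar\lambda)^{\langle*\rangle}u$ lies in $\wh\sN_\lambda\subset A^{[*]}$, so $\wh\Gamma_j$ acts as $\Gamma_j$, and evaluation of $\tfrac{1}{\sqrt{2}} U(\cdot,\lambda)u$ at $0$ and $l$ produces $\Gamma_0\wh\cP(\bar\lambda)^{\langle*\rangle} = \tfrac12(I + U(\lambda))$ and $\Gamma_1\wh\cP(\bar\lambda)^{\langle*\rangle} = \tfrac12\cJ(U(\lambda) - I)$, matching $w^\ell_{12}$ and $w^\ell_{22}$. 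The genuine obstacle is the left column, because $\wh\cQ(\bar\lambda)^{\langle*\rangle}u$ has second component $\lambda\cQ(\bar\lambda)^{\langle*\rangle}u + Lu$ containing the singular term $Lu\in\sL\setminus\sH$, so the element lies in $A^{\langle*\rangle}\setminus A^{[*]}$ and naive endpoint evaluation of $\Gamma_j$ gives the wrong answer. To bypass this I will use the decomposition of Lemma~\ref{lem:bf_A0}(ii) and write
\[
\wh\cQ(\bar\lambda)^{\langle*\rangle}u = \wh g_\lambda + \wh n_\lambda,\quad \wh g_\lambda = \begin{bmatrix}\wt R_\lambda^0 Lu \\ Lu + \lambda\wt R_\lambda^0 Lu\end{bmatrix}\in\bA_0,\quad \wh n_\lambda = \begin{bmatrix} h_\lambda \\ \lambda h_\lambda\end{bmatrix}\in\wh\sN_\lambda,
\]
where $h_\lambda := \cQ(\bar\lambda)^{\langle*\rangle}u - \wt R_\lambda^0 Lu$; here $h_\lambda\in\sN_\lambda$ because both summands satisfy $\cJ f' + \cF f = \lambda\cH f$ on $(0,l)$. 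By Lemma~\ref{lem:bf_A0Pi}, $\wh\Gamma_0\wh g_\lambda = 0$ and $\wh\Gamma_1\wh g_\lambda = \gamma(\bar\lambda)^{\langle*\rangle} Lu$, while $\Gamma_j\wh n_\lambda$ is found by endpoint evaluation of $h_\lambda$. The required explicit form
\[
\wt R_\lambda^0 Lu = -\sqrt{2}\, U(\cdot,\lambda)(I + U(\lambda))^{-1}\cJ u
\]
is obtained by dualising~\eqref{eq:R0lambda}, or equivalently by characterizing $\wt R_\lambda^0 Lu$ as the unique classical solution $U(\cdot,\lambda)f(0)$ on $(0,l)$ whose endpoints match the $A_0$-boundary condition $f(0) + f(l) = 0$ modulo the delta forcing. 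Assembling the pieces and using the algebraic identity $\cJ + \cJ M(\bar\lambda)\cJ = 2(I + U(\bar\lambda))^{-1}\cJ$, the contribution from $Lu$ adds precisely the corrective $-\cJ$ term missing from the naive computation, and one obtains $-\wh\Gamma_0\wh\cQ(\bar\lambda)^{\langle*\rangle} = w^\ell_{11}(\lambda)$ of~\eqref{eq:PiG_Res_M2}; the parallel computation for $\wh\Gamma_1$ yields $-\wh\Gamma_1\wh\cQ(\bar\lambda)^{\langle*\rangle} = w^\ell_{21}(\lambda)$, finishing the proof.
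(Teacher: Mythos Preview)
Your proof is correct and follows essentially the same route as the paper's: part (i) for $\cP(\lambda)^*$ is identical, and for part (ii) both you and the paper split $\wh\cQ(\bar\lambda)^{\langle*\rangle}u$ along the decomposition $A^{\langle*\rangle}=\bA_0\dotplus\wh\sN_\lambda$ of Lemma~\ref{lem:bf_A0}(iii), then apply the extended boundary mappings via Lemma~\ref{lem:bf_A0Pi}. The only noteworthy difference is your derivation of~\eqref{eq:Q*}: instead of computing $(f,\cQ(\lambda)^*v)_\sH$ from the integral kernel~\eqref{eq:R0lambda} and the explicit form of $\Pi_\sL^\lambda$ (as the paper does in~\eqref{eq:Q^*}--\eqref{eq:Q_lambda2}), you rewrite $\cQ(\lambda)=L^{\langle*\rangle}R_\lambda^0-a_{22}(\lambda)\cP(\lambda)$ and take adjoints term by term, borrowing $a_{22}(\lambda)=-\cJ M(\lambda)\cJ+K$ directly from Proposition~\ref{prop:PreresolvMatrix}; this is a cleaner bookkeeping of the same computation and makes the cancellation of the $M(\bar\lambda)$-terms transparent.
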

\begin{proof}
(i) Since $\cP(\lambda)^{*}u\in\sN_{\ov \lambda}=\{U(\cdot,\ov\lambda)v:\,v\in\dC^p\}$ for all $u\in\dC^p$,
there exists $v\in\dC^p$ such that $\cP(\lambda)^*u=U(\cdot,\ov\lambda)v$.
By the equality $L^{\langle *\rangle}\cP(\lambda)^{*}u=u$
we get $\sqrt{2}v=u$ and hence \eqref{eq:P*} holds.

Next, for $v\in\dC^p$ and $f\in \sH=L^2_\cH$ we obtain
\begin{equation}\label{eq:Q^*}
  (f,\cQ(\lambda)^*v)_{\sH}=v^*(\cQ(\lambda)f)
  =\sqrt{2}v^*\left(\{R_{\lambda}^0-\wh R_{\lambda}^0\Pi_\sL^\lambda\}f\right)(0).
\end{equation}
Since $\gamma(\ov\lambda)^{\langle *\rangle}L= 2(I+U^\#(\lambda))^{-1}$,
we get from~\eqref{eq:gamma_*}
\begin{equation}\label{eq:Pi_lambda}
\begin{split}
\Pi_\sL^\lambda f&=L(\gamma(\ov\lambda)^{\langle *\rangle}L)^{-1}\gamma(\ov\lambda)^{\langle *\rangle} f=\frac{1}{\sqrt{2}}L\int_0^l U^\#(s,\lambda)\cH(s)f(s)ds\\
&=\delta\otimes \int_0^l U^\#(s,\lambda)\cH(s)f(s)ds.
\end{split}
\end{equation}
By~\eqref{eq:R0lambda} and~\eqref{eq:cR},  we get
for $u\in\dC^p$ and $\lambda\in\rho(A_0)$
\begin{equation}\label{eq:Rlambda_delta}
  \wt R^0_\lambda(\delta\otimes u)
=\frac12 U(t,\lambda)(-\cJ-\cJ M(\lambda)\cJ)u,
\end{equation}
\begin{multline}\label{eq:Rlambda_delta2}
(\wt R^0_\lambda-\wt\cR)(\delta\otimes u)
= \frac12 U(t,\lambda)(-\cJ-\cJ M(\lambda)\cJ)u\\
+
\frac14\left\{ U(t,i)(\cJ+\cJ M(i)\cJ)+U(t,-i)(\cJ+\cJ M(-i)\cJ)\right\}u.
\end{multline}
By \eqref{eq:Q^*}, \eqref{eq:Pi_lambda}, \eqref{eq:Rlambda_delta} and \eqref{eq:Rlambda_delta2}, we obtain the equality
\begin{equation}\label{eq:Q_lambda2}
\begin{split}
(f,\cQ(\lambda)^*v)_{\sH}=&
\frac{1}{\sqrt{2}} v^*\int_0^l\left\{\cJ-\cJ M(\lambda)\cJ\right\}U^\#(s,\lambda)\cH(s)f(s)ds\\
&-v^*L^{\langle *\rangle}\wh R_{\ov\lambda}^0
\left\{\delta\otimes \int_0^l U^\#(s,\lambda)\cH(s)f(s)ds\right\}\\
=&\frac{1}{\sqrt{2}} v^*
\int_0^l\left\{\cJ-K\right\} U^\#(s,\lambda)\cH(s)f(s)ds,
\end{split}
\end{equation}
which proves \eqref{eq:Q*}.
\medskip

(ii) By~\eqref{eq:Left_WPQ}, the left $\Pi{\sL}$-resolvent matrix $W_{\Pi{\sL}}^{\ell}(\lambda)=[w_{ij}(\lambda)]_{i,j=1}^2$ is given by
\begin{equation}\label{eq:Left_WPQ2A}
   W_{\Pi{\sL}}^\ell({\lambda})
   =\begin{bmatrix}
   w_{11}(\lambda)  &  w_{12}(\lambda)\\
   w_{21}(\lambda)  &  w_{22}(\lambda)
       \end{bmatrix}
 =\begin{bmatrix}
   -\wh\Gamma_0\wh\cQ(\ov\lambda)^*  &  \Gamma_0\wh\cP(\ov\lambda)^*\\
   -\wh\Gamma_1\wh\cQ(\ov\lambda)^* &   \Gamma_1\wh\cP(\ov\lambda)^*
       \end{bmatrix},\quad
 {\lambda}\in\rho_s(A,{\sL}).
\end{equation}
%
By \eqref{eq:BT_Can} and \eqref{eq:P*}, we get
\begin{equation}\label{eq:Gamma0P}
\begin{split}
 w_{12}(\lambda)&= \Gamma_0\wh\cP(\ov\lambda)^*v=\frac12 (I_p+U(\lambda))v,\\
  w_{22}(\lambda)&= \Gamma_1\wh\cP(\ov\lambda)^*v=-\frac12 (I_p-U(\lambda))v.
  \end{split}
\end{equation}
Since, by~\eqref{eq:Rlambda_delta}, for $v\in\dC^p$
\begin{equation}\label{eq:wh_Rlambda_delta}
\begin{bmatrix}
  \wt R^0_{\lambda}(\delta\otimes v)\\
  \lambda \wt R^0_{\lambda}(\delta\otimes v)+\delta\otimes v
\end{bmatrix}=
\begin{bmatrix}
    -\frac12 U(\cdot,\lambda)(\cJ+\cJ M(\lambda)\cJ)v \\
 -\frac12 \lambda U(\cdot,\lambda)(\cJ+\cJ M(\lambda)\cJ)v  +\delta\otimes v
\end{bmatrix}.
\end{equation}
It follows from~\eqref{eq:PQ*}, \eqref{eq:Q*}, and~\eqref{eq:Rlambda_delta} that
\[
\wh\cQ(\ov\lambda)^*v=
\begin{bmatrix}
    -\frac{1}{\sqrt{2}} U(\cdot,\lambda)(\cJ+\cJ \text{\rm Re }M(i)\cJ)v \\
 -\frac12 \lambda U(\cdot,\lambda)(\cJ+\cJ \text{\rm Re }M(i)\cJ)v  +\delta\otimes v
\end{bmatrix}=
\begin{bmatrix}
  \wt R^0_{\lambda}(\delta\otimes \wt v)\\
  \lambda \wt R^0_{\lambda}(\delta\otimes \wt v)+\delta\otimes v
\end{bmatrix}
\]
where
\[
\wt v=(\cJ+\cJ M(\lambda)\cJ)^{-1}(\cJ+\cJ \text{\rm Re }M(i)\cJ)v,
\]
and
\begin{equation}\label{eq:wt v-v}
  \wt v-v=(\cJ+\cJ M(\lambda)\cJ)^{-1}\cJ (\text{\rm Re }M(i)- M(\lambda))\cJ v.
\end{equation}
Therefore,
\begin{equation}\label{eq:wh_Q*}
\wh\cQ(\ov\lambda)^*v=\wh f +\wh g,
\end{equation}
where
\begin{equation}\label{eq:wh_f_lambda}
 \wh f=\sqrt{2}
\begin{bmatrix}
  \wt R^0_{\lambda}(\delta\otimes \wt v)\\
  \lambda \wt R^0_{\lambda}(\delta\otimes \wt v)+\delta\otimes v
\end{bmatrix}\in \bA_0,\quad 
\wh g=\sqrt{2}\begin{bmatrix}
  \wt R^0_{\lambda}(\delta\otimes (\wt v-v))\\
  \lambda \wt R^0_{\lambda}(\delta\otimes(\wt v-v))
\end{bmatrix}\in\sN_\lambda.
\end{equation}

Now, using the formulas~\eqref{eq:wh_Gamma} for the  extended boundary triple $(\dC^p,\wh\Gamma_0,\wh\Gamma_1)$, \eqref{eq:wh_Rlambda_delta} and~\eqref{eq:wt v-v}, we obtain
\begin{equation}\label{eq:wh_Gamma f}
\wh\Gamma_0\wh f=0,\quad
\wh\Gamma_1\wh f=\sqrt{2}\gamma(\ov\lambda)^{\langle *\rangle}(\delta\otimes v)
=2(I_p+U^\#(\lambda))^{-1}v,
\end{equation}
\begin{equation}\label{eq:wh_Gamma0 g}
\begin{split}
\wh\Gamma_0\wh g&=-\frac{1}{\sqrt{2}}\Gamma_0U(\cdot,\lambda)
(\cJ+\cJ M(\lambda)\cJ)(\wt v-v)\\
&=-\frac{1}{2}(I_p+U(\lambda))
\cJ (\text{\rm Re }M(i)- M(\lambda))\cJ v\\
&=\frac{1}{2}\left\{(I_p-U(\lambda))\cJ -
(I_p+U(\lambda))K\right\} v
\end{split}
\end{equation}
\begin{equation}\label{eq:wh_Gamma1 g}
\begin{split}
\wh\Gamma_1\wh g&=\frac{1}{\sqrt{2}}\Gamma_1U(\cdot,\lambda)
(\cJ+\cJ M(\lambda)\cJ)(\wt v-v)\\
&=\frac{1}{2}\cJ(I_p-U(\lambda))
\cJ (\text{\rm Re }M(i)- M(\lambda))\cJ v\\
&=-\frac{1}{2}(I_p-U(\lambda))^2(I_p+U(\lambda))^{-1}\cJ v +
\frac{1}{2}\cJ(I_p-U(\lambda))K v
\end{split}
\end{equation}
By~\eqref{eq:wh_Gamma f}, \eqref{eq:wh_Gamma0 g} and~\eqref{eq:wh_Gamma1 g}, we get
\begin{equation}\label{eq:Gamma0Q}
   \Gamma_0\wh\cQ(\ov\lambda)^*v=\frac{1}{2}\left\{(I_p-U(\lambda))\cJ -
(I_p+U(\lambda))K\right\} v,
\end{equation}
\begin{equation}\label{eq:Gamma1Q}
\begin{split}
   &\Gamma_1\wh\cQ(\ov\lambda)^*v=\frac{1}{2}\left\{4(I_p+U^\#)^{-1}
   -\cJ(I_p-U)^2(I_p+U)^{-1}\cJ +
\cJ(I_p-U)K\right\} v\\
&=-\frac{1}{2}\cJ\left\{4 U(I_p+U)^{-1}
   +(I_p-U)^2(I_p+U)^{-1}\right\}\cJ v +
\frac{1}{2}\cJ(I_p-U)K v\\
&=-\frac{1}{2}\cJ(I_p+U)\cJ v +
\frac{1}{2}\cJ(I_p-U)K v.
\end{split}
\end{equation}
Now the formula~\eqref{eq:PiG_Res_M2} follows from~\eqref{eq:Gamma0P}, \eqref{eq:wh_Gamma0 g} and \eqref{eq:Gamma1Q}.
\end{proof}

\end{document}